\pgfplotsset{compat=1.11}
\theoremstyle{plain} 
\newtheorem{thm}{Theorem}[section]
\newtheorem{cor}[thm]{Corollary}
\newtheorem{lem}[thm]{Lemma}
\theoremstyle{definition}
\newtheorem{defn}[thm]{Definition}
\theoremstyle{remark}
\newtheorem{rem}[thm]{Remark}
\numberwithin{equation}{section}
\newcommand{\tR}{{\widetilde{R}}} 
\newcommand{\G}{{\overline{G}}}
\newcommand{\ord}{{\rm ord}}
\newcommand{\EE}{{\mathbb E}}
\definecolor {UMblue}  {RGB}{0, 39, 76}
\definecolor {UMmaize} {RGB}{255, 203, 5}
\definecolor {color_b}{RGB}{255,0,0}
\definecolor {color_c}{RGB}{20, 200, 30}
\definecolor {color_a}{RGB}{0,0,255}
\definecolor{lgreen} {RGB}{180,210,100}
\definecolor{dblue}  {RGB}{20,66,129}
\definecolor{ddblue} {RGB}{11,36,69}
\definecolor{lred}   {RGB}{220,0,0}
\definecolor{nred}   {RGB}{224,0,0}
\definecolor{norange}{RGB}{230,120,20}
\definecolor{nyellow}{RGB}{255,221,0}
\definecolor{ngreen} {RGB}{98,158,31}
\definecolor{dgreen} {RGB}{78,138,21}
\definecolor{nblue}  {RGB}{28,130,185}
\definecolor{jblue}  {RGB}{20,50,100}
\title{Partial Factorizations of Products of Binomial Coefficients}
\author{Lara Du and  Jeffrey C. Lagarias}
\address{Dept. of Mathematics, University of Michigan, Ann Arbor, MI 48109-1043, USA.} 
\date{September 17, 2020}
\begin{document}

\begin{abstract}
Let $\G_n= \prod_{k=0}^n \binom{n}{k},$
the product of the elements of the $n$-th row of Pascal's triangle.
This paper studies the partial factorizations
of $\G_n$ given by the product  $G(n,x)$ of all prime factors $p$ of $\G_n$ having $p \le x$,
counted with multiplicity. It shows $\log G(n, \alpha n) \sim f_G(\alpha)n^2$ as $n \to \infty$ for a limit function $f_{G}(\alpha)$
defined for $0 \le \alpha \le 1$. 
The main results are deduced from study of functions  $A(n, x), B(n,x),$
that encode statistics of the  base $p$ radix  expansions of the  integer $n$ (and smaller integers), 
where the base $p$ ranges over  primes $p \le x$.  
Asymptotics of $A(n,x)$ and $B(n,x)$  are derived  using the prime number theorem with remainder term or conditionally on  the Riemann hypothesis.

\end{abstract}

\maketitle

\vspace{-2em}

\vspace{1cm}
\section{Introduction}

 Let $\overline{G}_n$ denote  the product of the binomial coefficients in the $n$th row of Pascal's triangle
\begin{equation}\label{eqn:defGn}
\overline{G}_n:=\prod_{k=0}^{n}\binom{n}{k} = \frac{ (n!)^{n+1}} { \prod_{k=0}^{n} (k!)^2}.
\end{equation}
 These products were studied in \cite{LagM:2016}, 
where it was observed that the integer sequence $\G_n$ arises
 as the inverse of the product of all the non-zero unreduced Farey fractions,
  i.e. the set of all rational fractions in the unit interval $(0,1]$ having denominator at most $n$,
not necessarily  in lowest terms.
We write the prime factorization of $\G_n$ as 
 \begin{equation}\label{eqn:productformula}
 \G_n = \prod_p p^{\nu_p(\G_n)}
 \end{equation} 
 where $\nu_p(\G_n) = \ord_p(\G_n)$. 
Since $\G_n$ is an integer,  $\nu_p(\G_n) \ge 0$ for all $n \ge 1$.
The  asymptotic growth rate of $\G_n$  is easily determined,
using Stirling's formula, to be 
\begin{equation}\label{eqn:logG-asymp}
\log \overline{G}_n = \frac{1}{2}n^2 
- \frac{1}{2} n \log n + O(n),
\end{equation}
an estimate  which  is valid more generally for 
  the step function $\overline{G}_x := \overline{G}_{\lfloor x \rfloor}$ for all real $x \ge 1$. 
  The sequence  $\G_n$ considered only  at integer points $n$  has  
a complete asymptotic expansion for $\log \overline{G}_n$ to all orders in  $(\frac{1}{n})^k$ $(k \ge 0)$,
see \cite[Theorem A.2]{LagM:2016}.)

The purpose of this paper is to study  the internal  structure of the prime factorization of $\G_n$
as $n$ varies, as measured by the partial factorization
\begin{equation}\label{eqn:Gnx-def}
G(n, x) = \prod_{p \le x} p^{\nu_p(\G_n)}.
\end{equation} 
Here $G(n,x)$  is a divisor of $\G_n$  that includes   the total contribution of all primes up to $x$ in the product $\G_n$.
The function $G(n,x)$  for fixed $n$ 
 is an integer-valued step function of the variable $x$.
This function of $x$ stabilizes for $x \ge n$, with 
$$
G(n,x) = G(n,n)= \G_n  \quad \mbox{for} \quad x \ge n. 
$$
This paper determines the asymptotic behavior of $\log G(n,x)$ and related 
arithmetic statistics as $n \to \infty$ for a wide range of $x$, with emphasis on the range
when $x \sim \alpha n $, for fixed $0< \alpha\le 1$. To do so it determines the
asymptotic behavior of auxiliary statistics $A(n,x)$ and $B(n,x)$, defined below,  which encode
information on radix expansions of integers up to $n$ to prime bases $p \le n$. 

%
%
\subsection{Result: Asymptotics of $G(n,x)$} \label{sec:11} 

 We determine the size of the partial factorization
 function $G(n, x)$ in the range $1 \le x \le n$.
We establish limiting behavior as $n \to \infty$ taking  $x= x(n):= \alpha n$. 

%
%
\begin{thm}\label{thm:Gnx-main}
Let $G(n, x) = \prod_{p \le x} p^{\nu_p(\G_n)}$. Then for  all $0 < \alpha \le 1$,  
\begin{equation}\label{eqn:Gnx-main}  
\log G(n, \alpha n) = f_G(\alpha)  n^2 + R_{G}(n, \alpha n), 
\end{equation}
where $f_G(\alpha)$  is a  function given for $\alpha>0$ by
 \begin{equation}\label{eqn:Gnx-parametrized} 
f_G(\alpha) =  \frac{1}{2} 
%
+ \frac{1}{2} \alpha^2 \left\lfloor \frac{1}{\alpha}\right\rfloor^2 + \frac{1}{2} \alpha^2 \left\lfloor \frac{1}{\alpha} \right\rfloor-  \alpha \left\lfloor \frac{1}{\alpha} \right\rfloor,
\end{equation}  
with $f_G(0)=0$ and $R(n, \alpha n)$ is a remainder term. 

(1) Unconditionally there is a positive constant $c$ such that for all $n \ge 4$, and all  $0 < \alpha \le 1$ 
the remainder term satisfies
\begin{equation}\label{eqn:remU}
R_{G}(n, \alpha n) = O \left( \frac{1}{\alpha} n^2 \exp(- c \sqrt{\log n})\right). 
\end{equation} 
The implied constant in the $O$-notation does not depend on  $\alpha$.

(2) Conditionally on the Riemann hypothesis, for  all $n \ge 4$ and  all $0 < \alpha \le 1$, 
the remainder term satisfies
\begin{equation}\label{eqn:remRH}
R_{G}(n, \alpha n) = O \left( \frac{1}{\alpha} n^{7/4} (\log n)^2 \right),
\end{equation}
The  implied constant in the $O$-notation does not depend on  $\alpha$.
\end{thm}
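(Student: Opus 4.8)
The plan is to deduce Theorem~\ref{thm:Gnx-main} from the asymptotics of the radix‑statistic functions $A(n,x)$ and $B(n,x)$ established in the body of the paper, by first rewriting $\log G(n,x)$ as an explicit linear combination of $A(n,x)$ and $B(n,x)$. First I would record the combinatorial identity underlying this. Applying Legendre's formula $\nu_p(m!) = \frac{m - s_p(m)}{p-1}$ (equivalently $\nu_p(m!)=\sum_{j\ge 1}\lfloor m/p^j\rfloor$) to the closed form $\G_n = (n!)^{n+1}/\prod_{k=0}^n (k!)^2$ from \eqref{eqn:defGn} gives
\[
\nu_p(\G_n) \;=\; \frac{1}{p-1}\Bigl(\,2\sum_{k=0}^n s_p(k) \;-\; (n+1)\, s_p(n)\,\Bigr),
\]
so that, weighting by $\log p$ and summing over $p \le x$, one obtains an exact finite identity of the shape $\log G(n,x) = 2A(n,x) - (n+1)B(n,x)$, with $A(n,x)$ carrying the statistic summed over all $k \le n$ and $B(n,x)$ carrying only the base‑$p$ digit sum of $n$ itself (this is the ``and smaller integers'' in the definitions). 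No error is incurred at this step.

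Next I would substitute the known asymptotics. Writing the results for $A$ and $B$ in the normalized variable $x = \alpha n$ as $A(n,\alpha n) = f_A(\alpha)n^2 + R_A(n,\alpha n)$ and $(n+1)B(n,\alpha n) = f_B(\alpha) n^2 + R_B(n,\alpha n)$, with the remainder terms uniform in $\alpha$ of the two stated shapes — unconditionally $O\bigl(\tfrac1\alpha n^2 \exp(-c\sqrt{\log n})\bigr)$, and $O\bigl(\tfrac1\alpha n^{7/4}(\log n)^2\bigr)$ under the Riemann hypothesis — the identity immediately yields \eqref{eqn:Gnx-main} with
\[
f_G(\alpha) = 2 f_A(\alpha) - f_B(\alpha), \qquad R_G(n,\alpha n) = 2 R_A(n,\alpha n) - R_B(n,\alpha n),
\]
and the error bounds \eqref{eqn:remU} and \eqref{eqn:remRH} then follow by the triangle inequality, the factor $\tfrac1\alpha$ being preserved.

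The remaining task is purely algebraic: to verify that $2 f_A(\alpha) - f_B(\alpha)$ equals the right‑hand side of \eqref{eqn:Gnx-parametrized}. The piecewise $\lfloor 1/\alpha\rfloor$ structure arises from organizing the prime sum by the intervals $p \in (n/(j+1),\, n/j]$, on which $\lfloor n/p\rfloor = j$: the intervals with $j \ge \lceil 1/\alpha\rceil$ lie entirely below $\alpha n$ and contribute a telescoping series, while the single interval with $j = \lfloor 1/\alpha\rfloor$ is truncated at $p = \alpha n$ and supplies the $\alpha$‑dependent boundary term. I would carry out this summation (using $\sum_{p\le t}\log p \sim t$ and $\sum_{p\le t} p\log p \sim t^2/2$ only to isolate the main term, the errors being absorbed as above), collect the powers of $\alpha$ and $\lfloor 1/\alpha\rfloor$, and then run the consistency checks: $f_G(1) = \tfrac12$, matching $\log \G_n \sim \tfrac12 n^2$ from \eqref{eqn:logG-asymp}; continuity of $f_G$ at the breakpoints $\alpha = 1/m$ (where both one‑sided limits equal $\tfrac{1}{2m}$, each parabolic piece $\tfrac12 + \tfrac{m(m+1)}{2}\alpha^2 - m\alpha$ being increasing on its interval, so $f_G$ increases monotonically from $0$ to $\tfrac12$); and $f_G(\alpha)\to 0$ as $\alpha\to 0^+$, so that $f_G(0)=0$ is the continuous extension.

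The hard part will be the bookkeeping of uniformity in $\alpha$, especially as $\alpha \to 0$. One must ensure that (i) the truncated boundary interval $j = \lfloor 1/\alpha\rfloor$ is controlled with the same error quality as the full intervals; (ii) the contribution of small primes $p \le \sqrt n$, where the terms $\lfloor n/p^j\rfloor$ with $j\ge 2$ are nonzero and $s_p$ behaves differently, is genuinely of lower order (roughly $O(n^{3/2}\log n)$) and interferes with neither the main term nor the claimed remainder; and (iii) the $\tfrac1\alpha$ growth of the remainder — reflecting that there are $\asymp \tfrac1\alpha$ relevant $j$‑intervals as the base $p$ ranges over primes $\le \alpha n$ — is exactly what the estimates for $A$ and $B$ produce, and nothing worse. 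All of this should be encoded already in the asymptotics of $A(n,x)$ and $B(n,x)$, so the work of this proof is to apply those estimates carefully rather than to reprove them.
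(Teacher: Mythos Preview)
Your overall strategy matches the paper's proof exactly: express $\log G(n,x)$ as a linear combination of $A(n,x)$ and $B(n,x)$, invoke the already-established asymptotics of those two functions (Theorems~\ref{thm:Anx-cor} and~\ref{thm:Bnx-cor}), and read off $f_G$ together with the remainder bounds. The paper's proof of Theorem~\ref{thm:Gnx} is literally two lines doing this.

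However, the specific identity you write down is wrong. With the paper's definitions \eqref{eqnx:A-function} and \eqref{eqnx:B-function}, namely
\[
A(n,x)=\sum_{p\le x}\frac{2}{p-1}S_p(n)\log p,\qquad
B(n,x)=\sum_{p\le x}\frac{n-1}{p-1}d_p(n)\log p,
\]
the factors $2$ and $n-1$ are \emph{already built in}, and the exact identity is simply
\[
\log G(n,x)=A(n,x)-B(n,x),
\]
as stated in \eqref{eqn:GABx}. Your version $\log G(n,x)=2A(n,x)-(n+1)B(n,x)$ is not correct for these $A,B$; in fact with the paper's normalization $(n+1)B(n,\alpha n)$ is of order $n^3$, not $n^2$, so your displayed equation $(n+1)B(n,\alpha n)=f_B(\alpha)n^2+R_B$ is off by a full power of $n$. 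Consequently the formula you plan to verify, $f_G=2f_A-f_B$, is wrong: the correct relation is $f_G(\alpha)=f_A(\alpha)-f_B(\alpha)$, and subtracting \eqref{eqn:Bnx-parametrized} from \eqref{eqn:Anx-parametrized} gives \eqref{eqn:Gnx-parametrized} immediately (the Euler-constant and harmonic-number terms cancel). Your consistency check $f_G(1)=\tfrac12$ would in fact have revealed the problem, since $2f_A(1)-f_B(1)=2(\tfrac32-\gamma)-(1-\gamma)=2-\gamma\neq\tfrac12$.

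A secondary remark: your third and fourth paragraphs propose to recompute the main term by summing over the intervals $p\in(n/(j+1),n/j]$ and tracking the small-prime contribution and the $\tfrac1\alpha$ uniformity by hand. None of that is needed here. All of that work is already packaged inside the proofs of Theorems~\ref{thm:Bnx} and~\ref{thm:Anx}; the present theorem is a pure corollary, and once you have the correct identity $\log G=A-B$ there is nothing to do beyond subtracting the two main terms and adding the two remainder bounds.
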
  

The limit function $f_G(\alpha)= \lim_{n \to \infty} \frac{1}{n^2} \log G(n, \alpha n)$
is pictured in Figure \ref{fig:AB1}.

%

\begin{figure}[h] 
\begin{center}
\begin{tikzpicture}[scale=0.90]
    \begin{axis}[
        xmin=0,xmax=1,
        ymin=0,ymax=0.5,
        minor tick num=3
      ]
      \draw[blue,dashed] (0,0) -- (1,0.5);
      \draw[line cap=round,red,thick] \foreach \n in {1,...,100}{
        ({1/(\n+1)},{0.5/(\n+1)}) parabola ({1/\n},{0.5/\n})
      } -- (0,0);
    \end{axis}

  \end{tikzpicture}
\end{center}
\caption{Graph of limit function $f_{G}(\alpha)$ in $(\alpha,\beta)$-plane for  $0 \le \alpha \le 1.$
The dotted line is $\beta= \frac{1}{2} \alpha$.}
\label{fig:AB1}
\end{figure}
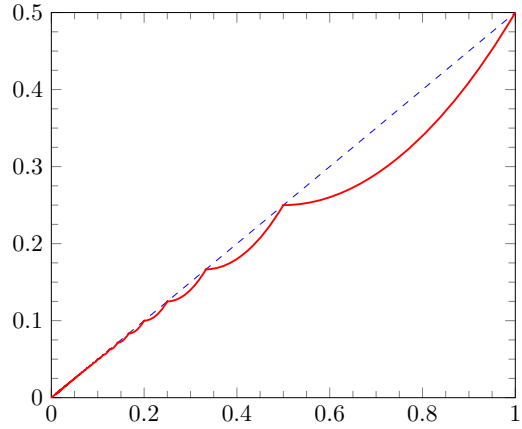 

The  limit function $f_G(\alpha)$
 has the following properties (cf. Lemma \ref{lem:fG}).
\begin{enumerate}
\item[(i)]
The function $f_{G}$  is continuous on $[0,1]$. 
It has    $\lim_{\alpha \to 0^{+}} f_G(\alpha)=0$ and the formula \eqref{eqn:Gnx-parametrized} gives the value $f_{G}(0)=0$, making
  the convention that $\alpha \lfloor \frac{1}{\alpha} \rfloor =1$ at $\alpha=0$.
It  is not differentiable at each point $\alpha= \frac{1}{j}$ for integer $j \ge 2$,
and not differentiable from above at $\alpha=0$.
\item [(ii)]
 The function $f_G$   satisfies
\begin{equation}
f_G( \alpha) \le \frac{1}{2}\alpha  \quad \mbox{for} \quad 0 \le \alpha \le 1.
\end{equation} 
Equality holds   at   $\alpha=\frac{1}{j}$ for all integer  $j \ge 1$,
with  $f_G\left(\frac{1}{j}\right) = \frac{1}{2 j}$, and at $\alpha=0$ (by convention)  and  at no other values. 
\end{enumerate} 
Specifically $f_{G}(\alpha)$ is   piecewise quadratic, i.e. for  $j \ge1$, on each closed interval $\left[ \frac{1}{j+1}, \frac{1}{j}\right]$ it is  given by 
\begin{equation}
f_G(\alpha) = \frac{1}{2} - j\alpha + \frac{j(j+1)}{2}  \alpha^2 \quad \mbox{for} \quad \frac{1}{j+1} \le x \le \frac{1}{j}.
\end{equation}

Theorem \ref{thm:Gnx-main}  is  a restated form of   Theorem \ref{thm:Gnx}, 
which applies uniformly to  the full range $1 \le x \le n$. 
 The new  content of Theorem  \ref{thm:Gnx-main}  concerns the range $0< \alpha <1$
and determination of the function $f_{G}(\alpha)$.  
At the endpoint $\alpha=1$  the binomial product estimate  \eqref{eqn:logG-asymp} gives an unconditional 
asymptotic formula  for $\log G(n,n)$ with  power-savings remainder term { better than} \eqref{eqn:remRH}.
This value $G(n,n)$  is given explicitly by a product of ratios of factorials, permitting the estimate.
Improved estimates are possible in  
some parts of the range $x=o(n)$, corresponding to $\alpha=0$,  
using exponential sum methods, as discussed at the end of Section \ref{sec:12}. 

We now consider the methods used to prove 
Theorem \ref{thm:Gnx-main}. It  is proved starting  from an expression for $\log G(n,x)$  
obtained  taking  the logarithm of  the factorization \eqref{eqn:Gnx-def}: 
 \begin{equation}\label{eqn:productformula2}
 \log G(n,x)  = \sum_{p\le x} \nu_p(\G_n) \log p. 
 \end{equation}
  The proof uses formulas 
  for the individual exponents  $\nu_p(\G_n)$
 given in terms of base $p$ radix expansion data of the integers up to $n$,
 proved in \cite{LagM:2016} 
  and stated in Section \ref{sec:12} below. 
 The  functions $\nu_p(\G_n)$ exhibit a kind of self-similar behavior, different for each $p$,
having large fluctuations.  
One can write the individual exponents $\nu_p(\G_n)$ as a difference of 
quantities given by statistics of the base $p$ radix expansion of integers up to $n$ 
(see Theorem \ref{thm:explicit} ).
Summing over $p \le x $ yields  a formula 
$$
\log G(n,x) = A(n,x) - B(n,x),
$$
involving  nonnegative arithmetic functions   $A(n,x)$ and $B(n,x)$ 
defined in \eqref{eqnx:A-function} and \eqref{eqnx:B-function} below. 
The functions $A(n,x)$ and $B(n,x)$ encode information on  prime number counts, as
detailed in Section \ref{sec:15} below.

The main technical results of this paper are estimates of  the  the size of  $A(n, x)$ and $B(n,x)$.
These functions  are weighted averages of statistics of the radix expansions of $n$ for  varying  prime bases $p \le x$.
Individual radix statistics have  been extensively studied in the literature, holding the radix base $p$ fixed and varying $n$.
The case of fixed $n$ and variable $p$  considered here seems  not well studied.

%
%
\subsection{Products of binomial coefficients and digit sum statistics}\label{sec:12}

   It is well known that the divisibility of binomial coefficients ${{n}\choose{k}}$ by
 prime powers is described by base $p$ radix expansion conditions, starting from work of Kummer, see the survey of  Granville \cite{Gra97}.
     Given a base $b \ge 2$, write the base $b$  radix expansion of an integer $n \ge 0$ as
         $$
   n = \sum_{i=0}^{k}  a_i b^i \quad \mbox{with} \quad 0 \le a_i= a_i(b, n) \le b-1,
   $$
 in which    $b^k \le n < b^{k+1}$ and the top digit  $a_k(b, n) \ge 1$. One has
 \begin{equation}\label{eqn:floor-recursion}
 a_i(b,n) = \left\lfloor \frac{n}{b^i} \right\rfloor - b\left\lfloor \frac{n}{b^{i+1}} \right\rfloor. 
 \end{equation}
The radix conditions involve the following two statistics of the base $b$ digits of $n$.

  \begin{defn}

 (1)  The {\em sum of digits function} $d_b(n)$ (to base $b$) is
   \begin{equation}
   d_b(n) := \sum_{i \ge 0} a_i(b, n).
  \end{equation}
   
  (2)  The {\em running digit sum function} $S_b(n)$ (to base $b$) is
   \begin{equation}
   S_b(n) := \sum_{j=0}^{n-1} d_b(j).
  \end{equation}
   \end{defn}

The paper \cite{LagM:2016} derived a closed  formula  for $\ord_p(\G_n)$ which involves such radix expansions
 of  the integers $1 \le j \le n$.

\begin{thm}\label{thm:explicit} 
{\rm (\cite[Theorem 5.1]{LagM:2016})}
For each prime $p$ one has for each $n \ge 1$, 
  \begin{equation}\label{eqn:localformula}
  \nu_p(\G_n) = \frac{2}{p-1} S_p(n) - \frac{n-1}{p-1} d_p(n).
  \end{equation}
\end{thm}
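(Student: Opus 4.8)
The plan is to combine two classical ingredients: Kummer's theorem on the $p$-adic valuation of a single binomial coefficient, and the Legendre/de Polignac formula for $\ord_p(n!)$. First I would recall Kummer's theorem, which states that $\ord_p\binom{n}{k}$ equals the number of carries when adding $k$ and $n-k$ in base $p$. Equivalently, using Legendre's formula $\ord_p(m!) = \frac{m - d_p(m)}{p-1}$, one has
\begin{equation}\label{eqn:kummer-legendre}
\ord_p\binom{n}{k} = \frac{d_p(k) + d_p(n-k) - d_p(n)}{p-1}.
\end{equation}
Summing \eqref{eqn:kummer-legendre} over $0 \le k \le n$ and using the defining product formula \eqref{eqn:defGn}, namely $\G_n = \frac{(n!)^{n+1}}{\prod_{k=0}^n (k!)^2}$, gives
\begin{equation}\label{eqn:nu-sum}
\nu_p(\G_n) = \frac{1}{p-1}\left( (n+1) d_p(n) + \sum_{k=0}^{n} d_p(k) + \sum_{k=0}^n d_p(n-k) - (n+1) d_p(n)\right),
\end{equation}
where the $(n+1)d_p(n)$ terms come from the $(n!)^{n+1}$ in the numerator via Legendre's formula and cancel, leaving the two symmetric sums over $k$ and over $n-k$.

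Next I would observe that reindexing $k \mapsto n-k$ shows $\sum_{k=0}^n d_p(n-k) = \sum_{k=0}^n d_p(k)$, so both sums equal $\sum_{j=0}^{n} d_p(j) = S_p(n) + d_p(n)$, since $S_p(n) = \sum_{j=0}^{n-1} d_p(j)$ by definition. Substituting into \eqref{eqn:nu-sum} then yields
\begin{equation}\label{eqn:nu-final}
\nu_p(\G_n) = \frac{2\bigl(S_p(n) + d_p(n)\bigr) - (n+1) d_p(n)}{p-1} = \frac{2}{p-1} S_p(n) - \frac{n-1}{p-1} d_p(n),
\end{equation}
which is exactly \eqref{eqn:localformula}. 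The derivation via Legendre's formula is cleaner than manipulating Kummer's carry count directly, since the $d_p$-based bookkeeping makes the cancellation of the numerator's contribution transparent.

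The main obstacle is essentially just careful bookkeeping of the endpoint terms: one must be precise about whether the sums run to $n$ or $n-1$, and track the single discrepancy $d_p(n)$ that separates $\sum_{j=0}^n d_p(j)$ from $S_p(n)$. A secondary point worth checking is that Legendre's formula $\ord_p(m!) = (m - d_p(m))/(p-1)$ holds uniformly, including the trivial cases $m = 0, 1$ where $d_p(m) = m$ and the valuation is $0$; this guarantees \eqref{eqn:kummer-legendre} is valid for all $k$ in the range. Once these are in hand, the identity drops out of \eqref{eqn:defGn} by direct substitution, so no deeper input beyond Legendre's formula is needed. (This is precisely the argument given in \cite[Theorem 5.1]{LagM:2016}, which we are quoting.)
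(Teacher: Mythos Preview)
Your approach is correct and is the standard Legendre-formula argument; the paper itself does not prove this statement but simply quotes it from \cite[Theorem 5.1]{LagM:2016}, so there is no in-paper proof to compare against. Your final computation \eqref{eqn:nu-final} is right.

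However, your intermediate display \eqref{eqn:nu-sum} is miswritten: as it stands the two $(n+1)d_p(n)$ terms cancel and you would be left with only $\frac{1}{p-1}\bigl(\sum_k d_p(k) + \sum_k d_p(n-k)\bigr)$, which is not what you then use in \eqref{eqn:nu-final}. The correct line, obtained by summing your \eqref{eqn:kummer-legendre} over $0\le k\le n$, is
\[
\nu_p(\G_n) = \frac{1}{p-1}\left(\sum_{k=0}^{n} d_p(k) + \sum_{k=0}^n d_p(n-k) - (n+1)\, d_p(n)\right),
\]
with no leading $+(n+1)d_p(n)$ term. Your accompanying commentary about the numerator contribution ``cancelling'' is likewise off: if you work from the factorial form $\G_n=(n!)^{n+1}/\prod_k(k!)^2$ directly, the cancellation that occurs is between $(n+1)n$ from the numerator and $2\sum_{k=0}^n k = n(n+1)$ from the denominator, not between the $d_p(n)$ terms. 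Fix that one display and the surrounding sentence and the argument is clean.
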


  The formula  \eqref{eqn:localformula} encodes large cancellations 
 of  powers of $p$ between the numerator and denominator
 of the factorial form for $\G_n$ on the right side of \eqref{eqn:defGn}.
 The individual terms  on the right side of \eqref{eqn:localformula}
 need not be integers: As an example,  $n=35$  has base $p= 7$ expansion $(50)_7$, whence 
 $d_7(35)=5$ and $\frac{n-1}{p-1} d_p(n)= \frac{85}{3}$, while $\frac{2}{p-1}S_p(n)= \frac{175}{3}$
 and $\nu_7(\G_{35}) = 30$.

 Taking  logarithms of both sides of the product formula \eqref{eqn:Gnx-def} 
for $G(n,x) $ and substituting the formula \eqref{eqn:localformula}  
for each $\nu_p(\G_n)$ 
yields  the following identity.
There  holds 
\begin{equation}\label{eqn:GABx}
\log G(n,x)  = A(n, x) - B(n, x),
\end{equation} 
where
\begin{equation}\label{eqnx:A-function}
A(n, x) = \sum_{p \le x} \frac{2}{p-1} S_p(n) \log p 
\end{equation}
and
\begin{equation}\label{eqnx:B-function}
B(n, x) = \sum_{p\leq x}\frac{n-1}{p-1} d_p(n) \log p. 
\end{equation} 
The functions $A(n, x)$ and $B(n, x)$ are arithmetical sums
that combine 
behavior of the base $p$ digits of the integer $n$, viewing $n$ as fixed, and varying the radix base  $p$.
The interesting range of $x$  is  $1 \le x \le n $  because these functions ``freeze" at $x=n$: 
$A(n, x)= A(n,n)$ for $x \ge n$ and $B(n,x) = B(n,n)$ for $x \ge n$.

We single out the  special case $x=n$, setting 
\begin{equation}\label{eqn:A-function}
A(n) := A(n,n)= \sum_{p \le n} \frac{2}{p-1} S_p(n) \log p 
\end{equation}
and
\begin{equation}\label{eqn:B-function}
B(n) :=B(n,n) =  \sum_{p\leq n}\frac{n-1}{p-1} d_p(n) \log p. 
\end{equation}
 The sums $A(n)$ and $B(n)$ hold $n$ fixed and vary the base $p$. 

The main results of the paper estimate the functions $A(n,x), B(n,x)$ and $\log G(n,x)$ for $1 \le x \le n$
 with  main terms having  the general form  $f(\alpha) n^2$ where $\alpha= \frac{x}{n}$ and with  such
  $f(\alpha)$ for $0\le \alpha \le 1$ is a continuous function having $f(0)=0$.
  The proofs  first  estimate $A(n,n)$ and $B(n,n)$,  and then use  these estimates as input  
to  recursively estimate $A(n,x)$ and $B(n,x)$ for general $x$.

Olivier Bordell\`{e}s informed us  that 
 exponential sum  methods yield alternative unconditional estimates 
for $A(n,x)$, $B(n, x)$ and $\log G(n,x)$, which are nontrivial when $x=o(n)$,
and apply for $x > \sqrt{n}$. 
These estimates improve on the estimates of our main
theorems for certain ranges of $x$.  
We present such estimates in  Appendix \ref{sec:appendix1}.
The main terms in the exponential sum  estimates have a different form than the main terms in  the
estimates for $A(n,x)$, $B(n,x)$ and $\log G(n,x)$.  Theorem \ref{thm:ABnx} 
obtains for $x=o(n)$   a simplified form of our main terms 
 which facilitates a comparison of the estimates. 
%
%
\subsection{Results: Asymptotics of $A(n)$ and $B(n)$}\label{sec:13}

We determine  asymptotics of the two functions $A(n)$ and $B(n)$  as $n \to \infty$, giving
a main term and  a bound on the remainder term. The analysis proceeds by estimating the fluctuating term $B(n)$
depending on $d_p(n)$.

\begin{thm}\label{thm:ABn}
Let 
$A(n)  = \sum_{p \le n} \frac{2}{p-1} S_p(n) \log p$
and 
$B(n) = \sum_{p\leq n}\frac{n-1}{p-1} d_p(n) \log p.$ 

(1) There is a constant $c>0$, such that for  $n \ge 4$, 
\begin{equation}\label{eqn:Uncond-A-aysmp} 
A(n) = \left(\frac{3}{2} -\gamma \right) n^2 + O \left( n^2 \exp (-c \sqrt{\log n}) \right),
\end{equation}
where $\gamma$ denotes Euler's constant. Similarly
\begin{equation}\label{eqn:Uncond-B-asymp} 
B(n) = (1 -\gamma) n^2 + O \left( n^2 \exp (-c \sqrt{\log n}) \right).
\end{equation}

(2) Assuming the Riemann hypothesis, for all $n \ge 4$,
\begin{equation} \label{eqn:RH-A-asymp}
A(n) = \left(\frac{3}{2} -\gamma \right) n^2 + O\left( n^{7/4} (\log n)^2\right).
\end{equation}
and 
\begin{equation}\label{eqn:RH-B-asymp} 
B(n) = (1 -\gamma) n^2 + O \left( n^{7/4} (\log n)^2\right),
\end{equation}
\end{thm}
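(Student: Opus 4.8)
The plan is to estimate $B(n)$ first, since it is the more delicate of the two sums (it involves the fluctuating digit-sum function $d_p(n)$ directly, without the smoothing provided by the inner summation in $S_p(n)$), and then to recover $A(n)$ either by a parallel argument or by exploiting an arithmetic identity relating $S_p(n)$ and $d_p(n)$. For $B(n)$, I would start from the exact formula $d_p(n) = \sum_{i \ge 0} a_i(p,n)$ together with the floor expression \eqref{eqn:floor-recursion}, so that
\begin{equation*}
\frac{d_p(n)}{p-1} = \sum_{i \ge 0} \frac{1}{p-1}\left( \left\lfloor \frac{n}{p^i} \right\rfloor - p \left\lfloor \frac{n}{p^{i+1}} \right\rfloor \right).
\end{equation*}
The point of dividing by $p-1$ is that there is a classical telescoping identity (essentially Legendre's formula for $\ord_p(n!)$): one has $\sum_{i \ge 1} \lfloor n/p^i \rfloor = \frac{n - d_p(n)}{p-1}$, hence $\frac{d_p(n)}{p-1} = \frac{n}{p-1} - \sum_{i\ge 1}\lfloor n/p^i\rfloor$. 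Substituting this into \eqref{eqn:B-function} gives
\begin{equation*}
B(n) = (n-1)\sum_{p \le n} \frac{n}{p-1}\log p \; - \; (n-1)\sum_{p \le n}\Big( \sum_{i \ge 1} \left\lfloor \tfrac{n}{p^i} \right\rfloor \Big)\log p .
\end{equation*}
The first sum is handled by partial summation against the prime-counting data: $\sum_{p\le n}\frac{\log p}{p-1} = \log n - \gamma + o(1)$ is a standard consequence of Mertens' theorem, and the error term here is exactly where the prime number theorem with classical zero-free region (for the unconditional statement) or RH (for the conditional statement) enters, producing the $\exp(-c\sqrt{\log n})$ or $n^{7/4}(\log n)^2$-type savings once multiplied through by the factor $n$. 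The second (double) sum is dominated by the $i=1$ term $\sum_{p\le n}\lfloor n/p\rfloor \log p$, which by removing the floor at cost $O(\pi(n)\log n) = O(n)$ and applying partial summation to $\sum_{p\le n}\frac{\log p}{p}$ again reduces to PNT-type inputs; the terms $i \ge 2$ contribute only $O(n^{3/2}\log n)$ or so and are absorbed. Collecting the coefficients of $n^2$ should yield $B(n) = (1-\gamma)n^2 + (\text{remainder})$.

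For $A(n)$, the cleanest route is to use the running digit sum $S_p(n) = \sum_{j=0}^{n-1} d_p(j)$ and the known closed form $S_p(n) = \frac{(p-1)}{2}\big(n\lfloor\log_p n\rfloor \text{-type main term}\big)$ — more precisely, one has the Trollope–Delange-style exact identity, but for our purposes only the average is needed: $\sum_{j=0}^{n-1} d_p(j) = \frac{p-1}{2}\sum_{i\ge 1}\lfloor n/p^i\rfloor + \frac{1}{2}\sum_{i\ge 1} p^i \{\text{digit corrections}\}$, and again the leading behavior is $S_p(n) \sim \frac{p-1}{2}\cdot\frac{n-d_p(n)}{p-1}\cdot(\text{something})$. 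In fact the slickest approach: summing $d_p(j)$ over $j < n$ and using $\frac{d_p(j)}{p-1} = \frac{j}{p-1} - \ord_p(j!) \cdot$ (adjusted), one gets $\frac{2}{p-1}S_p(n) = \frac{2}{p-1}\sum_{j<n} d_p(j)$, and $\sum_{j<n} d_p(j) = (p-1)\sum_{j<n}\ord_p\big(\tfrac{?}{}\big)$—I would instead just compute $\sum_{j<n}\sum_{i\ge 1}\lfloor j/p^i\rfloor$ directly: it equals $\sum_{i\ge 1}\big(\tfrac{1}{2}\tfrac{n^2}{p^i} + O(n/p^i + n)\big)$, so that $\frac{2}{p-1}S_p(n)$ has a leading term proportional to $\frac{n^2}{p-1}\cdot\frac{1}{p}$-summed, and after partial summation against primes one obtains the coefficient $\frac{3}{2}-\gamma$. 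The difference $A(n) - B(n)$ must of course agree with $\log G(n,n) = \log\G_n \sim \frac{1}{2}n^2$ from \eqref{eqn:logG-asymp}, which provides a built-in consistency check: $(\frac{3}{2}-\gamma) - (1-\gamma) = \frac{1}{2}$, confirming the main-term coefficients.

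The main obstacle, and the place where the two levels of the theorem genuinely diverge, is the passage from the elementary floor-function bookkeeping to sharp asymptotics for the prime sums $\sum_{p\le n}\frac{\log p}{p-1}$ and $\sum_{p \le n}\frac{\log p}{p}$ with an explicit remainder. Writing $\frac{\log p}{p-1} = \frac{\log p}{p} + \frac{\log p}{p(p-1)}$ and noting the correction series converges, everything reduces to $E(n) := \sum_{p \le n}\frac{\log p}{p} - \log n + \gamma$; partial summation expresses $E(n)$ in terms of $\psi(t) - t$ (or $\theta(t)-t$), and then one invokes the unconditional bound $\psi(t) - t = O(t\exp(-c\sqrt{\log t}))$ from the classical de la Vallée Poussin zero-free region, or the conditional bound $\psi(t)-t = O(t^{1/2}(\log t)^2)$ under RH. Propagating these through the factor of $n$ (from $n-1$ in $B$, and from $n^2/(p-1)$ after one summation in $A$) and through the partial-summation integral — which costs a factor $\int_2^n \frac{dt}{t}$ or similar, contributing the extra $\log$ and the $n^{7/4}$ (rather than $n^{3/2}$) shape under RH once the tail $\sum_{p > \sqrt n}$ pieces are combined — gives precisely the remainder terms claimed in \eqref{eqn:Uncond-A-aysmp}–\eqref{eqn:RH-B-asymp}. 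I would organize this final step as a single lemma on the quantity $E(n)$ with both an unconditional and a conditional bound, then feed it mechanically into the $A$ and $B$ computations.
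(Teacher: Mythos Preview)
Your Legendre-formula decomposition
\[
B(n) \;=\; (n-1)\Big[\, n\sum_{p\le n}\frac{\log p}{p-1}\;-\;\sum_{p\le n}\Big(\sum_{i\ge1}\Big\lfloor \frac{n}{p^i}\Big\rfloor\Big)\log p\,\Big]
\]
is correct and is a good starting point, but your treatment of the second bracket has a genuine gap. You propose to isolate the $i=1$ piece and ``remove the floor at cost $O(\pi(n)\log n)=O(n)$''. That $O(n)$ is a valid bound on $\sum_{p\le n}\{n/p\}\log p$, but once multiplied by the outer factor $(n-1)$ it becomes $O(n^2)$, the same order as the sought main term $(1-\gamma)n^2$; in fact $\sum_{p\le n}\{n/p\}\log p$ is genuinely of exact order $n$ (heuristically $\tfrac12\vartheta(n)$), so this is an actual obstruction, not merely a crude bound. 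The same defect afflicts your $i\ge2$ estimate: those terms sum to $n\sum_p\sum_{k\ge2}\frac{\log p}{p^k}+O(\sqrt n\log n)$, hence contribute a nonzero constant times $n^2$ to $B(n)$, not $O(n^{3/2}\log n)$.

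There is a clean repair that stays entirely within your framework: do \emph{not} split the inner sum over $i$. The full quantity $\sum_{p\le n}\big(\sum_{i\ge1}\lfloor n/p^i\rfloor\big)\log p$ equals $\log(n!)$ exactly, so your identity becomes the closed form $B(n)=(n-1)\big[\,n\sum_{p\le n}\frac{\log p}{p-1}-\log(n!)\,\big]$. Stirling handles $\log(n!)$ with error $O(\log n)$, and your Mertens step (correctly) gives $\sum_{p\le n}\frac{\log p}{p-1}=\log n-\gamma$ plus an error $O(e^{-c\sqrt{\log n}})$ unconditionally or $O(n^{-1/2}(\log n)^2)$ under RH; multiplying through yields $(1-\gamma)n^2$ with the required remainders (indeed under RH one gets $O(n^{3/2}(\log n)^2)$, sharper than stated). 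The paper itself takes a different route: it restricts to primes $p>\sqrt n$, uses the two-digit formula $d_p(n)=n-(p-1)\lfloor n/p\rfloor$, and evaluates $\sum_{\sqrt n<p\le n}\lfloor n/p\rfloor\log p$ by partitioning primes into the level sets $\{p:\lfloor n/p\rfloor=j\}$ and summing $\vartheta(n/j)$ over $j\le\sqrt n$, so that $\gamma$ enters via the harmonic sum $\sum_{j\le\sqrt n}1/j$. For $A(n)$ your sketch is over-elaborate: the paper simply invokes the exact identity $A(n)=\log\G_n+B(n)$ together with \eqref{eqn:logG-asymp}, which you mention only as a ``consistency check'' but is in fact the whole argument.
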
 

Theorem \ref{thm:ABn} answers  a question raised in \cite[Section 8]{LagM:2016} of  whether the
asymptotic growth  of $A(n)$ is the same as that of the sum  $A^{\ast}(n)$ obtained by replacing each   $S_p(n)$ with the  leading term of its asymptotic
growth estimate as $n \to \infty$. They are not the same:  see Section \ref{sec:15}.

To establish Theorem \ref{thm:ABn} it suffices to prove it for  $B(n)$;   the estimate for $A(n)$  then follows
from the linear relation $A(n)  = \log \G_n + B(n)$ (from \eqref{eqn:GABx}) combined with the asymptotic
estimate  for $G(n)$ in \eqref{eqn:logG-asymp}.
The main contribution in the sum  $B(n)$ comes from  those  primes  $p$
having  $p > \sqrt{n}$, whose key  property  is that :
{\em their base $p$  radix expansions have  exactly two digits}. 
The size of the remainder term then involves prime counting functions, which relate to the zeros of the Riemann zeta function. 
 We obtain an  unconditional result from the standard zero-free region for $\zeta(s)$.
The Riemann hypothesis, or more generally a zero-free region for the zeta function of the form  $Re(s)> 1- c_0$
for some $c_0>0$ 
yields an  asymptotic formula of shape 
$B(n) = (1-\gamma)n^2 + O\left( n^{2- \delta}\right)$ with a power-saving remainder term
$\delta= \delta(c_0)$ depending on the width of the zero-free region.

 The constants appearing in the main terms of the asymptotics of  $A(n)$ and $B(n)$  in Theorem \ref{thm:ABn}
 give quantitative information on 
cross-correlations between the statistics $d_p(n)$ and $S_p(n)$  
of the base $p$ digits of $n$ (and smaller integers) as the base $p$ varies while $n$ is held fixed. 
As suggested in the survey \cite{Lag:13}, the occurrence of Euler's constant in the main term of these asymptotic estimates 
encodes subtle arithmetic behavior in these sums.

%
%
\subsection{Results: Asymptotics of $A(n,x)$ and $B(n,x)$}\label{sec:14}

We first  determine  asymptotics for  $B(n, \alpha n)$ for $0 \le \alpha \le 1$,
starting  from $B(n)=B(n,n)$ and obtaining $B(n,x)$ by decreasing $x$ from $x=n$.
In what follows $H_m =\sum_{j=1}^m \frac{1}{j}$ denotes the $m$-th harmonic number
and $\gamma$ denotes Euler's constant.

\begin{thm}\label{thm:Bnx-cor}
Let $B(n, \alpha n) = \sum_{p\leq \alpha n}\frac{n-1}{p-1} d_p(n) \log p.$ 
Then for  all $0 < \alpha \le 1$,  
\begin{equation}\label{eqn:Bnx-main}  
B(n, \alpha n) = f_B(\alpha)  n^2 + R_{B}(n, \alpha n), 
\end{equation}
where $f_B(\alpha)$  is a function given for $\alpha>0$ by
 \begin{equation}\label{eqn:Bnx-parametrized} 
f_B(\alpha) =  1- \gamma+  \left( H_{\lfloor \frac{1}{\alpha}\rfloor}- \log \frac{1}{\alpha} \right)  - \alpha \left\lfloor \frac{1}{\alpha}\right\rfloor,
\end{equation} 
with $f_B(0)=0$,
and  $R_{B}(n, \alpha n)$ is a remainder term. 

(1) Unconditionally there is a positive constant $c$ such that for all $n \ge 4$, and $0 < \alpha \le 1$,
the remainder term satisfies
\begin{equation}\label{eqn:BremU}
R_{B}(n, \alpha n) = O \left( \frac{1}{\alpha} n^2 \exp(- c \sqrt{\log n})\right). 
\end{equation} 
The implied
 constant in the $O$-notation does not depend on  $\alpha$.

(2) Conditionally on the Riemann hypothesis, for  all $n \ge 4$ and $0 < \alpha \le 1$, 
the remainder term satisfies
\begin{equation}\label{eqn:BremRH}
R_B(n, \alpha n) = O \left( \frac{1}{\alpha} n^{7/4} (\log n)^2 \right),
\end{equation}
The  implied constant in the $O$-notation does not depend on  $\alpha$.
\end{thm}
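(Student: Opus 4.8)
The plan is to derive $B(n,\alpha n)$ from the value $B(n)=B(n,n)$ supplied by Theorem~\ref{thm:ABn}, by decreasing the cutoff from $x=n$ down to $x=\alpha n$. One writes
\[
B(n,\alpha n)=B(n,n)-\sum_{\alpha n<p\le n}\frac{n-1}{p-1}\,d_p(n)\log p,
\]
and the key point is that when $\alpha\ge n^{-1/2}$ every prime $p$ in the subtracted sum exceeds $\sqrt n$, so the base $p$ radix expansion of $n$ has exactly two digits: $n=a_1p+a_0$ with $a_1=\lfloor n/p\rfloor$, and hence $d_p(n)=\lfloor n/p\rfloor+n-p\lfloor n/p\rfloor=n-(p-1)\lfloor n/p\rfloor$. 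Substituting this, the subtracted sum becomes
\[
(n-1)n\sum_{\alpha n<p\le n}\frac{\log p}{p-1}\;-\;(n-1)\sum_{\alpha n<p\le n}\Bigl\lfloor\frac np\Bigr\rfloor\log p,
\]
so the problem reduces to estimating these two sums over the two-digit primes and reassembling.

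For the first sum I would replace $1/(p-1)$ by $1/p$, the correction $(n-1)n\sum_{p>\alpha n}\frac{\log p}{p(p-1)}$ being $O\bigl(\alpha^{-1}n\log n\bigr)$ and harmless, and then apply the Mertens estimate $\sum_{p\le y}\frac{\log p}{p}=\log y+C+O\!\bigl(\exp(-c\sqrt{\log y})\bigr)$ (resp.\ $+O\bigl((\log y)^2/\sqrt y\bigr)$ on RH), which follows by partial summation from the prime number theorem with remainder, $\theta(y):=\sum_{p\le y}\log p=y+E(y)$. Differencing at $y=n$ and $y=\alpha n$, and using $\alpha n\ge\sqrt n$ so that the endpoint error is controlled by $E$ at a point of size $\gg\sqrt n$, gives $\sum_{\alpha n<p\le n}\frac{\log p}{p}=\log(1/\alpha)+(\text{error})$, whence the first sum equals $n^2\log(1/\alpha)$ plus an $O(n\log n)$ term plus a remainder of the asserted shape. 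For the second sum I would write $\lfloor n/p\rfloor=\#\{k\ge1:kp\le n\}$, interchange the order of summation, and obtain
\[
\sum_{\alpha n<p\le n}\Bigl\lfloor\frac np\Bigr\rfloor\log p=\sum_{k=1}^{\lfloor1/\alpha\rfloor}\bigl(\theta(n/k)-\theta(\alpha n)\bigr),
\]
the upper limit being $\lfloor1/\alpha\rfloor$ since the inner difference is $\le0$ for larger $k$. Inserting $\theta(y)=y+E(y)$ and summing $\sum_k 1/k$ produces the main term $nH_{\lfloor1/\alpha\rfloor}-\alpha n\lfloor1/\alpha\rfloor$; because every argument $n/k$ with $k\le\lfloor1/\alpha\rfloor\le\sqrt n$, as well as $\alpha n$, is $\ge\sqrt n$, all the $E$-errors are controlled uniformly (they total $O\bigl(n(\log n)\exp(-c\sqrt{\log n})\bigr)$ unconditionally, $O\bigl(\alpha^{-1/2}n^{3/2}(\log n)^2\bigr)$ on RH, each $\le$ the claimed bound).

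Combining these with $B(n,n)=(1-\gamma)n^2+O\bigl(n^2\exp(-c\sqrt{\log n})\bigr)$ (resp.\ $+O(n^{7/4}(\log n)^2)$) from Theorem~\ref{thm:ABn}, and absorbing the $O(n\log n)$ debris coming from $(n-1)n=n^2+O(n)$ and $(n-1)=n+O(1)$ into the remainder, I expect
\[
B(n,\alpha n)=(1-\gamma)n^2-n^2\log\tfrac1\alpha+n^2\Bigl(H_{\lfloor1/\alpha\rfloor}-\alpha\lfloor1/\alpha\rfloor\Bigr)+R_B(n,\alpha n),
\]
and a one-line rearrangement identifies the main term with $f_B(\alpha)n^2$ in the form \eqref{eqn:Bnx-parametrized}, with $R_B(n,\alpha n)$ obeying \eqref{eqn:BremU} and, on RH, \eqref{eqn:BremRH}. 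The remaining range $0<\alpha\le n^{-1/2}$ is handled directly: there $B(n,\alpha n)\ll n^{3/2}\log n$ from the crude bound $d_p(n)\le(p-1)(\log_p n+1)$ together with $\pi(\alpha n)\ll\alpha n/\log(\alpha n)$, while $f_B(\alpha)=O(\alpha)$ (using $H_m=\log m+\gamma+O(1/m)$ and $\alpha\lfloor1/\alpha\rfloor=1+O(\alpha)$) gives $f_B(\alpha)n^2=O(n^{3/2})$; both are dominated by $\alpha^{-1}n^{7/4}(\log n)^2$ and by $\alpha^{-1}n^2\exp(-c\sqrt{\log n})$ once $\alpha\le n^{-1/2}$, so the asserted bounds for $R_B$ hold trivially in that range.

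The step I expect to be the main obstacle is the uniform control of all error terms in $\alpha$. The factor $1/\alpha$ in $R_B$ is forced precisely by the second sum: the single term $\lfloor1/\alpha\rfloor\,E(\alpha n)$ and the accumulated tail $\sum_{k\le1/\alpha}E(n/k)$ each inflate the error by a factor $\asymp1/\alpha$ when $\alpha$ is near $n^{-1/2}$. One must check that no further $\alpha$-dependence is hidden — in the passage from $\theta(y)$ to $\sum_{p\le y}\log p/p$, in the treatment of the endpoint $y=\alpha n$, and in the claim that the genuinely lower-order pieces (of sizes $n\log n$, and $n^{3/2}\log n$ in the small-$\alpha$ range) never exceed the stated bound — and that the unconditional constant $c$ can be taken independent of $\alpha$.
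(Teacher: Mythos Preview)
Your proposal is correct and follows essentially the same route as the paper: write $B(n,\alpha n)=B(n)-B^c(n,\alpha n)$, use the two-digit formula $d_p(n)=n-(p-1)\lfloor n/p\rfloor$ for $p>\sqrt n$ to split $B^c$ into a Mertens-type sum $n(n-1)\sum_{\alpha n<p\le n}\frac{\log p}{p-1}$ and a $\sum\lfloor n/p\rfloor\log p$ piece, estimate both via the prime number theorem with remainder, and treat $\alpha\le n^{-1/2}$ trivially. The only cosmetic difference is that for the second piece you interchange summation after writing $\lfloor n/p\rfloor=\sum_{k\ge1}\mathbf{1}[kp\le n]$, arriving directly at $\sum_{k\le\lfloor1/\alpha\rfloor}(\vartheta(n/k)-\vartheta(\alpha n))$, whereas the paper reaches the equivalent expression via Abel summation on the level sets $j=\lfloor n/p\rfloor$; the resulting main terms and error bounds are the same.
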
 

The limit function $f_{B}(\alpha)$  is pictured in Figure \ref{fig:B2}.
The function lies strictly above the diagonal line $\beta=(1-\gamma)\alpha$; note that  in  \eqref{eqn:GABx}
in   its relation to $\log G(n, x)$ it appears with a negative sign, consistent with $f_{G}(\alpha) \le \frac{1}{2} \alpha$.

%

\begin{figure}[h] 
\includegraphics[scale=0.60]{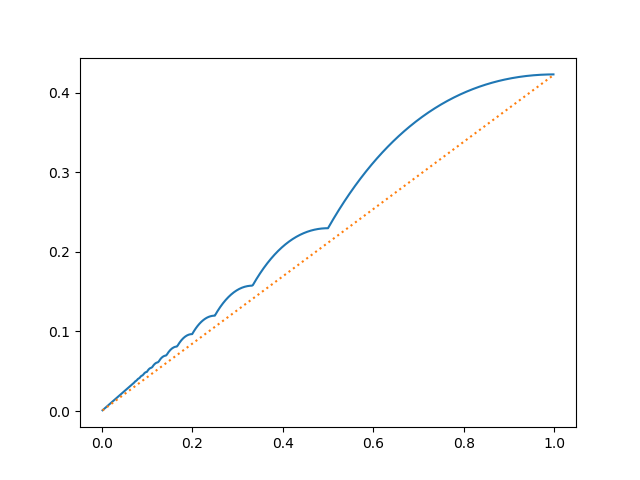}
\caption{Graph of limit function $f_{B}(\alpha)$ in $(\alpha, \beta)$-plane,  $0 \le \alpha \le 1.$ The dotted line is $\beta= (1-\gamma)  \alpha$,
where $\gamma$ is Euler's constant.}
\label{fig:B2}
\end{figure} 

We then obtain asymptotics for $A(n,x)$ using a recursion starting from $A(x,x)$ (given by \eqref{eqn:AB-recursion}) relating $A(n, x)$ to various $B(y,x)$
with $x \le y \le n$.

\begin{thm}\label{thm:Anx-cor}
Let $A(n, \alpha n ) = \sum_{p\leq \alpha n}\frac{2}{p-1} S_p(n) \log p.$ 
Then for  all $0 < \alpha \le 1$, 
\begin{equation}\label{eqn:Anx-main}  
A(n, \alpha n) = f_A(\alpha)  n^2 + R_{A}(n, \alpha n), 
\end{equation}
where $f_A(\alpha)$  is a function given for $\alpha>0$ by
 \begin{equation}\label{eqn:Anx-parametrized} 
 f_A(\alpha) =  \frac{3}{2} - \gamma+  \left( H_{\left\lfloor \frac{1}{\alpha}\right\rfloor}- \log \frac{1}{\alpha} \right)  
%
+ \frac{1}{2} \alpha^2 \left\lfloor \frac{1}{\alpha}\right\rfloor^2 + \frac{1}{2} \alpha^2 \left\lfloor \frac{1}{\alpha} \right\rfloor
- 2 \alpha \,\left\lfloor \frac{1}{\alpha} \right\rfloor,
\end{equation}
with $f_{A}(0) =0$, 
and  $R_{A}(n, \alpha n)$ is a remainder term. 

(1) Unconditionally there is a positive constant $c$ such that for all $n \ge 4$, and $0 < \alpha \le 1$.
the remainder term satisfies 
\begin{equation}\label{eqn:AremU}
R_{A}(n, \alpha n) = O \left( \frac{1}{\alpha} n^2 \exp(- c \sqrt{\log n})\right). 
\end{equation} 
The implied constant in the $O$-notation does not depend on  $\alpha$.

(2) Conditionally on the Riemann hypothesis, for  all $n \ge 4$ and $0 < \alpha \le 1$,
the remainder term satisfies 
\begin{equation}\label{eqn:AremRH}
R_{A}( n, \alpha n) = O \left( \frac{1}{\alpha} n^{7/4} (\log n)^2 \right),
\end{equation}
The  implied constant in the $O$-notation does not depend on  $\alpha$.
\end{thm}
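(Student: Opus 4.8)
The plan is to derive everything from the recursion \eqref{eqn:AB-recursion}, the $x=n$ asymptotics of Theorem~\ref{thm:ABn}, and the asymptotics of $B(n,\alpha n)$ from Theorem~\ref{thm:Bnx-cor}. Writing the recursion out explicitly: from $S_p(k)-S_p(k-1)=d_p(k-1)$ and the definitions \eqref{eqnx:A-function}--\eqref{eqnx:B-function} one gets, for integers $k\ge 3$,
\[
A(k,x)-A(k-1,x)=\sum_{p\le x}\tfrac{2}{p-1}\,d_p(k-1)\log p=\tfrac{2}{k-2}\,B(k-1,x),
\]
and telescoping down to the base point $m=\lceil x\rceil$ gives
\[
A(n,x)=A(\lceil x\rceil,x)+\sum_{\lceil x\rceil\le k<n}\tfrac{2}{k-1}\,B(k,x)+O(1).
\]
The base term differs from $A(\lceil x\rceil,\lceil x\rceil)$ only by the contribution of at most one prime $p\in(x,\lceil x\rceil]$, which is $\le\tfrac{2}{p-1}S_p(p)\log p=O(x\log x)$ since $S_p(p)=\binom p2$; hence by Theorem~\ref{thm:ABn}, $A(\lceil x\rceil,x)=\big(\tfrac32-\gamma\big)x^2+O\big(x^2\exp(-c\sqrt{\log x})\big)+O(x\log x)$ for $x$ large, and the range $x=O(1)$ is trivial since then $A(n,x)=O(n\log n)$.

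For each $k\ge\lceil x\rceil$ we have $0<x/k\le 1$, so Theorem~\ref{thm:Bnx-cor} applies with the parameter $x/k$: $B(k,x)=f_B(x/k)\,k^2+R_B(k,x)$ with $|R_B(k,x)|=O\big(\tfrac{k}{x}\,k^2\exp(-c\sqrt{\log k})\big)$ unconditionally, resp. $O\big(\tfrac{k}{x}\,k^{7/4}(\log k)^2\big)$ on RH. Summing the remainder against $\tfrac2{k-1}$ gives $O\big(\tfrac1x\sum_{k\le n}k^2\exp(-c\sqrt{\log k})\big)$, which after splitting the $k$-sum at $\sqrt n$ (and shrinking $c$) is $O\big(\tfrac1x\,n^3\exp(-c\sqrt{\log n})\big)=O\big(\tfrac1\alpha n^2\exp(-c\sqrt{\log n})\big)$, resp. $O\big(\tfrac1\alpha n^{7/4}(\log n)^2\big)$; this is exactly the claimed size of $R_A(n,\alpha n)$.

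For the main part, $\tfrac{2k^2}{k-1}=2k+2+\tfrac2{k-1}$ and $|f_B|=O(1)$ reduce it to $\sum_{\lceil x\rceil\le k<n}2k\,f_B(x/k)+O(n)$. The function $t\mapsto 2t\,f_B(x/t)$ is continuous on $[x,n]$, is a polynomial in $t$ on each of the $\lfloor1/\alpha\rfloor$ intervals $t\in[jx,(j+1)x]$, and is $O(n)$ with derivative $O(1)$ throughout $[x,n]$; so an Euler--Maclaurin comparison on each piece replaces the sum by $\int_x^n 2t\,f_B(x/t)\,dt$ at a cost $O(\tfrac1\alpha n)$. The substitution $u=x/t$ turns this into $2x^2\int_\alpha^1 u^{-3}f_B(u)\,du=2\alpha^2 n^2\int_\alpha^1 u^{-3}f_B(u)\,du$, so collecting terms,
\[
A(n,\alpha n)=\Big(\big(\tfrac32-\gamma\big)\alpha^2+2\alpha^2\!\int_\alpha^1 u^{-3}f_B(u)\,du\Big)n^2+R_A(n,\alpha n),
\]
with $R_A$ of the order established above.

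It remains to identify the bracket with $f_A(\alpha)$ from \eqref{eqn:Anx-parametrized}. By \eqref{eqn:Bnx-parametrized}, on $[\tfrac1{j+1},\tfrac1j]$ we have $f_B(u)=1-\gamma+H_j+\log u-ju$, with explicit antiderivative
\[
F_j(u)=-\tfrac12(1-\gamma+H_j)u^{-2}-\tfrac12 u^{-2}\log u-\tfrac14 u^{-2}+j u^{-1}.
\]
Splitting $\int_\alpha^1$ at the points $1/j$ for $1\le j\le N:=\lfloor1/\alpha\rfloor$ and using the telescoping identity $F_j(1/j)-F_{j-1}(1/j)=j/2$ yields $\int_\alpha^1 u^{-3}f_B(u)\,du$ in closed form; multiplying by $2\alpha^2$, adding $(\tfrac32-\gamma)\alpha^2$, and noting that the $\alpha^2$-terms cancel ($(\tfrac32-\gamma)\alpha^2+\gamma\alpha^2-\tfrac32\alpha^2=0$) reproduces exactly \eqref{eqn:Anx-parametrized}; as checks, $f_A(1)=\tfrac32-\gamma$ matches Theorem~\ref{thm:ABn}, and $f_A(\alpha)=f_B(\alpha)+f_G(\alpha)$ with $f_G$ as in \eqref{eqn:Gnx-parametrized}. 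Continuity on $[0,1]$ and $f_A(0)=0$ follow from $H_{\lfloor1/\alpha\rfloor}-\log\tfrac1\alpha\to\gamma$ and $\alpha\lfloor1/\alpha\rfloor\to 1$ as $\alpha\to0^+$. The main obstacle is purely one of uniform bookkeeping: one must verify that the $\lfloor1/\alpha\rfloor$-fold interval splitting forced by $f_B$ and the factor $\tfrac1x$ coming from $R_B$ each contribute only the single factor $\tfrac1\alpha$ permitted by the statement, and carry out the elementary but error-prone telescoping evaluation of $\int_\alpha^1 u^{-3}f_B(u)\,du$.
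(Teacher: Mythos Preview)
Your proof is correct and follows essentially the same strategy as the paper: derive $A(n,x)$ from the recursion \eqref{eqn:AB-recursion}, using Theorem~\ref{thm:ABn} for the base term $A(x,x)$ and Theorem~\ref{thm:Bnx-cor} (the paper uses the equivalent $x$-variable form, Theorem~\ref{thm:Bnx}) for each $B(k,x)$ in the sum. The only notable difference is in handling the main-term sum $\sum_k 2k\,f_B(x/k)$: the paper evaluates it by direct finite-sum computation, splitting it into three explicit pieces $A_{21}$, $A_{22}$, $A_{23}$ (see the proof of Theorem~\ref{thm:Anx}), whereas you replace the sum by the integral $\int_x^n 2t\,f_B(x/t)\,dt$ via Euler--Maclaurin on each piecewise-polynomial interval and then evaluate the integral in closed form by telescoping the antiderivatives $F_j$. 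Your route is arguably tidier conceptually; the paper's direct summation yields a slightly sharper intermediate error for this step ($O(n\log n)$ rather than your $O(n/\alpha)$ from the $\lfloor 1/\alpha\rfloor$-fold piecewise comparison), but both are comfortably absorbed by the stated remainder bounds.
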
  

The limit function $f_{A}(\alpha)$  is pictured in Figure \ref{fig:A3}. It lies
very close to the line $\beta= (3/2 -\gamma) \alpha$. 
The graph of 
$f_{A}(\alpha)$   falls  below the line $\beta= (3/2 -\gamma) \alpha$
for $\alpha> \alpha_0$ and falls above it for $\alpha < \alpha_0$, with $\alpha_0 \approx 0.82$. 
The figure also depicts  a plot of its derivative $f_{A}^{'}(\alpha)$, with horizontal dotted line
indicating  derivative $\frac{3}{2} -\gamma$.
%

\begin{figure}[h] 
\includegraphics[scale=0.60]{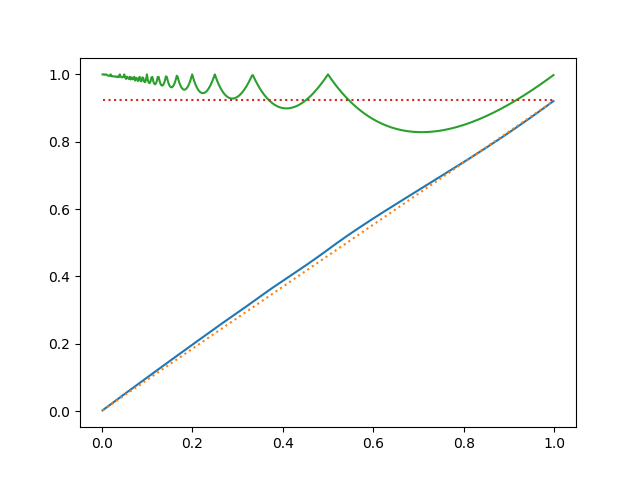}
\caption{Graph of limit function $f_{A}(\alpha)$ in $(\alpha, \beta)$-plane,  $0 \le \alpha \le 1.$ The dotted line is $\beta= (3/2-\gamma)  \alpha$,
where $\gamma$ is Euler's constant. Superimposed on the graph is a plot of  the derivative $f_A^{'}(\alpha)$ drawn to  the same scale.} 
\label{fig:A3}
\end{figure} 

We note that the functions $f_{A}(\alpha)$ and $f_{B}(\alpha)$ 
are continuous functions of $\alpha$, although the given floor function 
 formulas for $f_{A}(\alpha)$ and $f_{B}(\alpha)$ are a sum of functions that are discontinuous 
at the points $\alpha= \frac{1}{k}$.

Theorem \ref{thm:Bnx-cor} and Theorem \ref{thm:Anx-cor} are restated
versions of Theorems \ref{thm:Bnx} and Theorem \ref{thm:Anx} given in
terms of the $x$-variable. Theorem \ref{thm:Gnx-main} follows as a corollary of these 
two theorems, substituting their estimates into the formula
$\log G(n,x)  = A(n, x) - B(n, x)$. 
In the subtraction giving the asymptotics of $\log G(n,x)$, Euler's constant cancels out.

 %
%
\subsection{Motivation: Digit sum statistics and the prime number theorem }\label{sec:15} 

The  statistics $A(n,x)$ and $B(n,x)$ can be related to the problem of estimating $\pi(x)$. 
\subsubsection{Running digit sum $S_b(n)$} 
The radix statistic
 $S_b(n)$ a fixed integer base $b \ge 2$
 has been extensively studied.
 It was treated in 1940 by  Bush \cite{Bush:40}, followed by Bellman and  Shapiro \cite{BelS:48}, and Mirsky \cite{Mir:49}, who in 1949 showed
that for all $b \ge 2$, 
\begin{equation}\label{eqn:mirsky}
S_b(n) = n \log_b(n) +O_{b}(n),
\end{equation}  
where the implied constant in the $O$-notation depends on  the base $b$. 
In 1952  Drazin and Griffith \cite{DG52} 
deduced an inequality implying that for all bases $b \ge 2$, 
\begin{equation}\label{eqn:S-ineq} 
S_b(n) \le \frac{b-1}{2} n \log_b n  \quad \mbox{ for all} \quad n \ge 1, 
\end{equation}
 with equality holding for  $n=b^k$ for $k \ge 1$, cf. \cite[Theorem 5.8]{LagM:2016}. 
The upper bound \eqref{eqn:S-ineq} suggests consideration of the statistic
 \begin{equation}\label{eqn:Anx-star} 
 A^{\ast}(n, x) := \sum_{p \le x} \frac{2}{p-1} \left( \frac{p-1}{2} n \log_p n \right)  \log p = \pi(x) \,n \log n. 
\end{equation} 
 Applying  inequality \eqref{eqn:S-ineq} for $S_p(n)$ term-by-term to the definition of $A(n,x)$ yields
  \begin{equation}\label{eqn:Anx-star-formula} 
A(n,x) \le A^{\ast}(n,x) =  \pi(x) n \log n.
\end{equation} 
Furthermore from the estimate \eqref{eqn:mirsky}  applied term-by-term  to the definition of $A(n,x)$, 
we obtain, viewing $x$ as fixed and $n$ as varying. 
\begin{equation}\label{eqn:Anx-O-estimate}
A(n,x) = \pi(x) n \log n +O_{x}(n),  
\end{equation}
where the implied constant in the $O$-symbol depends on $x$. 
 It follows that  for fixed $x$ 
 one has the  the asymptotic formula 
\begin{equation}\label{eqn:Anx-asymptotic-n}
A(n, x) \sim A^{\ast}(n,x) = \pi(x) \,n \log n \quad \mbox{as} \quad n \to \infty, 
\end{equation} 
Thus $A(n,x)$  encodes  information about $\pi(x)$ for  $n$ very large compared to $x$.
 In  the case where 
 $x=n$  \eqref{eqn:Anx-star-formula} gives 
 \begin{equation}
 A(n) = A(n,n) \le A^{\ast}(n,n) = \pi(n) n \log n.
 \end{equation} 
The   prime number theorem estimate  $\pi(n) = \frac{n}{\log n} +O \left( \frac{n}{(\log n)^2}\right)$ yields
$$
\pi(n) n \,\log n  = n^2 +O\left(\frac{n^2}{\log n} \right).
$$

The question of whether $A(n) \sim A^{\ast}(n,n)$ could hold as $n \to \infty$,
 was raised in \cite[Sect. 8]{LagM:2016}. 
By the prime number theorem it is  equivalent to the question whether $A(n) \sim n^2$  as $n \to \infty$. 
Theorem \ref{thm:ABn} answers this question in the negative,  showing that
 $A(n)\sim \left(\frac{3}{2} -\gamma \right)n^2$, with 
 $\frac{3}{2} - \gamma \approx 0.92288$. 
    
\subsubsection{Digit sums $d_b(n)$}    
    The digit sums  $d_b(n)$ are oscillatory quantities that have been modeled probabilistically, where one 
 samples for a fixed $b$, the values $d_b(k)$ uniformly in a certain range  of $k$. One 
  has for each $n \ge 1$ the inequality
 $$
 \EE[ d_b(k): 1 \le k \le n-1] = \frac{1}{n-1} S_b(n),
 $$
 and it follows that 
 \begin{equation}
   \EE[ d_b(k): 0 \le k \le n-1]  =  \frac{b-1}{2} \log_b n + O_b(1), 
 \end{equation} 
 according to \eqref{eqn:mirsky}. 
 Furthermore the bound \eqref{eqn:S-ineq}  gives
 \begin{equation}\label{eqn:dbx-bound} 
  \EE[ d_b(k): 0 \le k \le n-1]  \le  \frac{b-1}{2} \log_b n,
 \end{equation} 
 The statistic $B(n,x)$ averages over $\frac{n-1}{p-1} d_p(n) \log p$ holding $n$ fixed and varying $p$.
 Now \eqref{eqn:dbx-bound}  gives
$$
\frac{n-1}{b-1}  \EE[ d_b(k): 0 \le k \le n-1] \log b \le \frac{1}{2} \log  n.
$$ 
 If the averaging over $p$ in $d_p(n)$  in this statistic behaved similarly to  averaging over $n$  for fixed $n$,
 then we might expect $B(n,x)$ to behave similarly to the statistic 
 \begin{equation}\label{eqn:Bnx-star-formula} 
  B^{\ast}(n, x) :=   \sum_{p \le x} \frac{n-1}{p-1} \left( \frac{p-1}{2} \log_p(n)\right) \log p  = \frac{1}{2} \pi (x) n \log n. 
\end{equation}
The  prime number theorem  yields the estimate
$$
B^{\ast} (n, n) =  \frac{1}{2} n^2 + O \left(\frac{n^2}{\log n} \right).
$$
The  question whether $B(n) \sim B^{\ast}(n,n)$ holds as $n \to \infty$ is equivalent to whether $B(n) \sim \frac{1}{2}n^2$ as $n \to \infty$ holds.
Theorem \ref{thm:ABn} answers this question in the negative, with $B(n) = (1-\gamma) n^2$ as $n \to \infty$
and  $1- \gamma \approx 0.42288$.

\subsubsection{Asymptotics for $A(n,x)$ and $B(n,x)$ with $x=o(n)$}
The estimates for $A(n,n)$ and $B(n,n)$
 reveal  a difficulty in deducing the prime number theorem from
 radix expansion statistics, purely from  knowing the limiting statistics as $n \to \infty$ holding $p$ fixed.
Theorem \ref{thm:ABn}  shows that the 
 problem is that the contributions of individual primes $p$
 in these radix expansion statistics  have not reached their individual  limiting  asymptotics  as $n\to \infty$,
  holding $p$ fixed.    In addition, when  $x=\alpha n $ and $n \to \infty$,  the formulas for $f_A(\alpha)$ and $f_{B}(\alpha)$  
   exhibit oscillations in the main terms of the estimates for $A(n,x)$ and $B(n,x)$.
  
 In contrast we  show  that
 for certain ranges of
relatively large  $x=o(n)$ the  asymptotic formula
  $A(n,x) \sim A^{\ast}(n,x)$ is valid. 
 
\begin{thm}\label{thm:correct-asymp}  

 Suppose that a sequence  $(n_j, x_j)$ with $1 \le x_j \le n_j$  having   $n_j \to \infty$ as $j \to \infty$ satisfies the two conditions
\begin{equation}\label{eqn:hypotheses-asymp} 
\lim_{j \to \infty} \frac{x_j}{n_j} = 0 \quad \mbox{and} \quad \lim_{j \to \infty} \frac{ \log x_j}{\log n_j} =1.
\end{equation} 
Then,
\begin{equation}\label{eqn:Anx-star-asymptotic-0}
A(n_j, x_j) \sim A^{\ast}(n_j, x_j) := \pi(x_j) n_j \log n_j \quad \mbox{as} \quad j \to \infty,
\end{equation}
and  
\begin{equation}\label{eqn:Bnx-star-asymptotic-0}
\,\, \, B(n_j, x_j) \sim B^{\ast}(n_j, x_j) := \frac{1}{2} \pi(x_j) n_j \log n_j \quad \mbox{as} \quad j\to \infty.
\end{equation}
In consequence 
 \begin{equation}\label{eqn:Gnx-asymptotic-0}
\log G(n_j, x_j)  \sim \frac{1}{2} \pi(x_j) n_j \log n_j \quad \mbox{as} \quad j \to \infty
\end{equation} 
\end{thm}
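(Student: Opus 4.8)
The plan is to obtain the asymptotics of $\log G(n,x)$ and of $B(n,x)$ directly and then read off the one for $A(n,x)$ from $A(n,x)=\log G(n,x)+B(n,x)$ (a rearrangement of \eqref{eqn:GABx}). Write $x=x_j$, $n=n_j$; the two hypotheses force $x=n^{1-o(1)}$, hence $\sqrt n<x$ for all large $j$, $x/\log n\to\infty$, and $n^{3/2}=o(nx)=o(\pi(x)n\log n)$ (using $\pi(x)\log n\sim\pi(x)\log x\sim x$). It therefore suffices to treat the ``two-digit'' primes $\sqrt n<p\le x$: the contribution of $p\le\sqrt n$ to each of $A$, $B$, $\log G$ is $O(\pi(\sqrt n)\,n\log n)=O(n^{3/2})$, by the Drazin--Griffith bound \eqref{eqn:S-ineq} for $A$, the trivial bound $d_p(n)\le(p-1)(\log_p n+1)$ for $B$, and $\nu_p(\G_n)\le\tfrac2{p-1}S_p(n)\le n\log_p n$ for $\log G$.

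For $\sqrt n<p\le x$ we have $p\le n<p^2$, so $n=a_1p+a_0$ with $a_1=\lfloor n/p\rfloor\ge1$ and $a_0=n\bmod p=p\{n/p\}$. Then $d_p(n)=a_1+a_0$, and a short computation from Theorem~\ref{thm:explicit} together with the explicit two-digit value $S_p(n)=\tfrac12 p\,a_1(a_1+p-2)+a_1a_0+\tfrac12 a_0(a_0-1)$ yields the clean identity $\nu_p(\G_n)=\lfloor n/p\rfloor\,(p-1-(n\bmod p))$ (it gives, e.g., $\nu_7(\G_{35})=5\cdot 6=30$, matching the value recorded after \eqref{eqn:localformula}). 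Putting $t=\{n/p\}$ and using $\lfloor n/p\rfloor=n/p-t$, $n\bmod p=pt$, one gets
\[
\frac{d_p(n)}{p-1}=\frac{n}{p(p-1)}+t,\qquad
\nu_p(\G_n)=n(1-t)+t-\frac np-pt(1-t).
\]
Summing these against $\log p$ over $\sqrt n<p\le x$ and discarding the manifestly lower-order terms (using $n^2\sum_{p>\sqrt n}\tfrac{\log p}{p^2}=O(n^{3/2}\log n)$, $\sum_{\sqrt n<p\le x}p\log p=O(x^2)$, $n\sum_{p\le x}\tfrac{\log p}p=O(n\log n)$, all $o(nx)$), we arrive at the formulas
\[
B(n,x)=n\,\Sigma(n,x)+o(nx),\qquad
\log G(n,x)=n\,\theta(x)-n\,\Sigma(n,x)+o(nx),
\]
where $\Sigma(n,x):=\sum_{\sqrt n<p\le x}\{n/p\}\log p$. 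So the whole theorem reduces to the single estimate $\Sigma(n,x)\sim\tfrac12\theta(x)$: granting it, $\theta(x)=x+o(x)$ and $\pi(x)\log n\sim x$ give $B(n,x)\sim\tfrac12\pi(x)n\log n$, $\log G(n,x)\sim n\theta(x)-\tfrac12 n\theta(x)\sim\tfrac12\pi(x)n\log n$, and hence $A(n,x)=\log G(n,x)+B(n,x)\sim\pi(x)n\log n$.

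To prove $\Sigma(n,x)\sim\tfrac12\theta(x)$ I would partition the primes in $(\sqrt n,x]$ into the zones $\mathcal Z_k=\{p:\lfloor n/p\rfloor=k\}=\bigl(\tfrac n{k+1},\tfrac nk\bigr]$, where $k$ runs from $K:=\lceil n/x\rceil$ to $\lfloor\sqrt n\rfloor$ (the $O(1)$ boundary pieces contribute $O(x^2/n)+O(\sqrt n\log n)=o(x)$). On $\mathcal Z_k$ one has $\{n/p\}=n/p-k$, and integration by parts against $\theta(t)=t+r(t)$ gives
\[
\sum_{p\in\mathcal Z_k}\Bigl\{\tfrac np\Bigr\}\log p
=n\log\frac{k+1}{k}-\frac n{k+1}-r\!\Bigl(\tfrac n{k+1}\Bigr)+n\!\int_{n/(k+1)}^{n/k}\!\frac{r(t)}{t^2}\,dt .
\]
Summing the main terms over $k\ge K$ and using $\sum_{k\ge K}\bigl(\log\tfrac{k+1}k-\tfrac1{k+1}\bigr)=H_K-\log K-\gamma=\tfrac1{2K}+O(1/K^2)$ yields $n\bigl(\tfrac1{2K}+O(1/K^2)\bigr)=\tfrac x2+O(x^2/n)=\tfrac12\theta(x)+o(x)$. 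Here the constant $\tfrac12$ is exactly the mean of a fractional part over a zone (each zone sweeps a full unit interval of $n/t$), and it is precisely the hypothesis $x=o(n)$, i.e.\ $K\to\infty$, that makes this mean attained; the second hypothesis $\log x/\log n\to1$ is what lets the resulting $\tfrac12\theta(x)$ be matched with the target $\tfrac12\pi(x)n\log n/n$.

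The hard part will be the error analysis: bounding $\sum_{K\le k\le\sqrt n}\bigl[r(n/(k+1))+n\int_{n/(k+1)}^{n/k}r(t)t^{-2}\,dt\bigr]$ by $o(x)$. After telescoping, the surviving piece is $n\int_{\asymp\sqrt n}^{\asymp x}r(t)t^{-2}\,dt$ plus an endpoint term of shape $K\,r(n/K)\asymp\tfrac nx\bigl(\theta(x)-x\bigr)$; the prime number theorem with remainder bounds these only by $O(n\exp(-c\sqrt{\log n}))$, which is $o(x)$ just when $x\ge n\exp(-c'\sqrt{\log n})$, so sharper input is needed to cover the full hypothesis. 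The point is that the zone weights supply an extra factor $\sum_k k^{-2}\asymp 1/K\asymp x/n$, so the bulk of the error is really $O\bigl(\tfrac nK\sup_{t\asymp x}|r(t)|/t\bigr)=O(x\exp(-c\sqrt{\log x}))=o(x)$, while the residual contribution --- from zones with $k$ so small that $n/k$ is within a factor $o(1)$ of $x$, where one needs the count of primes in intervals $[t,t+h]$ with $h\asymp t^2/n$ --- is handled by a short-interval prime number estimate (Huxley, or Baker--Harman--Pintz), applicable because $x=n^{1-o(1)}$ forces $h\ge t^{7/12+o(1)}$ off a part of the range of total size $o(x)$. (Alternatively one can start from Theorems~\ref{thm:Bnx-cor} and~\ref{thm:Anx-cor} after checking that $f_B(\alpha)=\tfrac12\alpha+O(\alpha^2)$ and $f_A(\alpha)=\alpha+O(\alpha^2)$ as $\alpha\to0$; but then $R_B,R_A$ must be shown $o(nx)$, which is the same obstacle, the generic bound $O(\tfrac1\alpha n^2\exp(-c\sqrt{\log n}))$ being too weak in the regime $x=n^{1-o(1)}$.) Once $\Sigma(n,x)\sim\tfrac12\theta(x)$ is in hand, the displayed formulas for $B(n,x)$ and $\log G(n,x)$ complete the proof.
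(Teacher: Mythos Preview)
Your reduction is correct and clean: the two hypotheses do force $x=n^{1-o(1)}$, the small-prime contribution is $O(n^{3/2})=o(nx)$, the two-digit identity for $\nu_p(\G_n)$ is right, and the whole statement does boil down to
\[
\Sigma(n,x)=\sum_{\sqrt n<p\le x}\Bigl\{\tfrac{n}{p}\Bigr\}\log p \;\sim\; \tfrac12\,\vartheta(x).
\]
Your zone decomposition and the main-term computation $\sum_{k\ge K}(\log\tfrac{k+1}{k}-\tfrac{1}{k+1})=\tfrac{1}{2K}+O(K^{-2})$ are also correct.

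The gap is in the error analysis, and it is a real one. After your Abel summation the surviving error is \emph{not} weighted by $k^{-2}$: it is the unweighted sum $\sum_{K\le k\le \sqrt n} r(n/k)$ (together with the integral $n\int r(t)t^{-2}\,dt$), and the classical zero-free region gives only $O\bigl(n(\log n)\exp(-c'\sqrt{\log n})\bigr)$ for it. That is $o(x)$ only when $x\ge n\exp(-c''\sqrt{\log n})$, and your hypotheses allow, e.g., $x=n\exp(-(\log n)^{0.9})$, where this bound fails. Your proposed rescue via Huxley/Baker--Harman--Pintz is not the right weapon here: short-interval results control $\vartheta(y+h)-\vartheta(y)$, i.e.\ the zone masses, but the quantity you must control is the \emph{distribution of $\{n/p\}$ inside each zone}, which is a genuinely different (equidistribution) problem. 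Making the Huxley idea work would require subdividing each zone and tracking the error uniformly across all sub-intervals; this is substantial and you have not carried it out. Your parenthetical alternative via Theorems~\ref{thm:Bnx-cor}--\ref{thm:Anx-cor} runs into exactly the same wall, as you note.

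The paper closes this gap by an entirely different tool: exponential sums. Appendix Theorem~\ref{thm:A1} (due to Bordell\`{e}s) proves directly that for $n^{2/3}\le x\le n$,
\[
B(n,x)=\tfrac12\,\vartheta(x)\,n+O\!\left(x^{5/4}n^{3/4}(\log n)^{7/2}+n^{5/3}\log n\right),
\]
the proof passing through $\sum_{m}\Lambda(m)B_1(\{n/m\})$ and invoking an exponential-sum bound of Granville--Ramar\'e for $\sum_m\Lambda(m)e(hn/m)$. This error is $o(nx)$ precisely under $x=o(n)$ and $x\gg n^{2/3}$, which your hypotheses guarantee; Theorem~\ref{thm:A2} gives the companion estimate for $A(n,x)$. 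The paper's proof of Theorem~\ref{thm:correct-asymp} then simply quotes Theorems~\ref{thm:Anx}, \ref{thm:ABnx}, \ref{thm:A2} (and their $B$-analogues) and the prime number theorem. So your reduction is equivalent to what Theorem~\ref{thm:A1} asserts, but the missing ingredient---the exponential-sum estimate---is exactly the nontrivial input, and short-interval prime counting does not substitute for it.
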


Theorem \ref{thm:correct-asymp} is proved in Section \ref{subsec:ABnx}.
The  asymptotic formulas of Theorem \ref{thm:correct-asymp}  fail to hold.
for  values of $x$ smaller than \eqref{eqn:hypotheses-asymp} 
relative to $n$ 
 For example,  taking $x_j=n_j^{\alpha}$ for any fixed $\alpha$ with $2/3 < \alpha <1$ 
the right side of \eqref{eqn:Anx-star-asymptotic-0}  is  $A^{\ast}(n_j, x_j) \sim \frac{1}{\alpha} n_j x_j$ but 
Theorem \ref{thm:A2}  combined with the prime number theorem $\vartheta(x) \sim x$
shows the left side   of \eqref{eqn:Anx-star-asymptotic-0}     is $A(n_j, x_j) \sim n_j x_j$ in this range. Also $B^{\ast}(n_j, x_j) \sim \frac{1}{\alpha} n_j x_j$
while Theorem \ref{thm:A1} and the prime number theorem show $B(n_j, x_j) \sim \frac{1}{2 \alpha} n_j x_j$.

%
%
\subsection{Related work}\label{sec:16}

Binomial coefficients and their factorizations have been studied  
in prime number theory and in sieve methods.  In 1932 in one of his first papers
Erd\H{o}s \cite{Erd:32} used the central binomial coefficients ${{2n}\choose{n}}$ to get an elegant proof of 
 Bertand's postulate, asserting  that there exists a prime between $n$ and $2n$,
as well as   Chebyshev type estimates for $\pi(x)$ (\cite{Che:1852}). 
Later Erd\H{o}s   showed with Kalmar in 1937 
that such an approach could in principle yield the
 prime number theorem,  in the sense that suitable (multiplicative) linear combinations of factorials exist to
 give a sharper sequence of inequalities yielding the result.  However their  proof of the existence of such identities 
 assumed the prime number theorem to be true. The  proof with Kalmar was lost, but in 1980
Diamond and Erd\H{o}s \cite{DiaErd:80} reconstructed a proof. 
  For Erd\H{o}s's  remarks on the work with Kalmar see \cite[pp. 58--59]{Erd:97} and  Rusza \cite[Section 1]{Ruz:99}. 
 We mention  also that  the  internal structure of prime factors of the middle binomial coefficient ${{2n}\choose{n}}$ 
 has received detailed study, see Erd\H{o}s et al \cite{ErdGRS:75} and Pomerance \cite{Pom:05}.

An  earlier  paper of the second author and Mehta \cite{LagM:2016} studied  
products of unreduced Farey fractions, and in it expressed $\log \G_n$ 
 in terms of   radix digit  statistics $A(n)$ and $B(n)$.
 Another paper  \cite{LagM:2017} studied parallel questions for products of Farey fractions, which were  related to questions in prime number theory.
On digit sums $S_b(n)$,  a  formula of Trollope \cite{Tro:68}  found in 1968 for base $2$ led to  notable work of 
Delange \cite{Del:1975}, giving an exact formula for $S_b(n)$ for all $b \ge 1$.  
It asserts that, for a general base $b \ge 2$, 
\begin{equation}\label{eqn:delange} 
S_b(n) = \frac{b-1}{2} n \log_b(n) + f_b(\log_b n) n
\end{equation} 
where $f_b(x)$ is a continuous function, periodic of period $1$,
which is everywhere non-differentiable.  Substituting  $n=1$ gives $f_b(0)=0$,
and the inequality \eqref{eqn:S-ineq} 
implies that $f_b(x) \le 0$ for all real $x$.
Further work  on $S_b(n)$ includes Flajolet et al \cite{FGKPT94} and Grabner and Hwang \cite{GH05},  
discussed  in  a survey of  Drmota and Grabner \cite{DrmGra10}. 
For work on  the distribution of digit sums $d_b(n)$, see the survey of   Chen et al \cite{CHZ:2014}.

 Up  to now  direct information on  sums over radix expansions like $A(n)$ or $B(n)$ has not been  not been successfully used  
 to obtain  proofs of the prime number theorem. 
  The appearance of Euler's constant in their  asymptotics connects to many problems in number theory, cf. \cite{Lag:13}.
 The prime number theorem  has been successfully deduced 
 by elementary methods.  In 1945 Ingham \cite{Ing45}  deduced the prime number theorem  
 from a Tauberian theorem starting from asymptotic estimates of 
$$F(x) = \sum_{n \le x} f\left(\frac{x}{n}\right),$$
 under the Tauberian condition that $f(x)$ is positive and increasing.  The prime number theorem  was deduced 
from estimates of $\log n!$, by N. Levinson \cite{Lev64} in 1964  by a related method. These methods  obtain a
remainder term saving at most one logarithm. 
In 1970  Diamond and Steinig \cite{DiaSt:70} obtained by elementary methods a proof of the prime number theorem
  with a remainder term $O \large( x \exp( - c (\log x)^{\beta}) \large)$ for $\beta= \frac{1}{7} + \epsilon$.
  The exponent was improved to $\beta=\frac{1}{6} - \epsilon$
  by Lavrik and Sobirov \cite{LS73}.  In 1982 Diamond \cite{Dia82} gave a useful survey of such 
 approaches to the prime number theorem. 

%
%
\subsection{Contents of paper}\label{sec:17} 

 Section \ref{sec:2} derives estimates of $A(n)$ and $B(n)$.
 Section \ref{sec:asymp-Anx} derives estimates of $A(n,x)$ and $B(n,x)$,
 and proves Theorem \ref{thm:correct-asymp}. 
 In addition Theorem \ref{thm:ABnx}  in Section  \ref{subsec:ABnx} gives simplified formulas 
 for the main terms in the asymptotics of
 $A(n,x)$ and $B(n,x)$ which apply when $x= o(n)$.
Section \ref{sec:Gnx} derives estimates of $\log G(n,x)$. and proves
properties of the limit functions $f_{G}(\alpha)$. Section \ref{sec:5}
  determines the limit function $f_{BC}(\alpha)$
 for partial factorizations of the central binomial coefficients ${{2n}\choose{n}}$.
 Appendix \ref{sec:appendix1} presents  estimates for 
 $B(n,x)$  based on exponential sums due to O. Bordell\'{e}s, 
 yielding  improved estimations for $A(n,x)$, $B(n,x)$, $\log G(n,x)$
 for some ranges of  $x=o(n)$.

\subsection*{Acknowledgments}
We thank Olivier Bordell\`{e}s for
communicating the exponential sum estimates  given  in Theorem \ref{thm:A1}.
of the Appendix.
We are indebted to D. Harry Richman for providing plots of the limit functions,
and to Wijit Yangjit for helpful comments. 
We thank the reviewer for references and significant simplifications of proofs.
Theorem \ref{thm:ABn}  appears  in the PhD. thesis of the first author (\cite{Du:2020}),
who thanks  Trevor Wooley for helpful comments. 
The first author was partly supported by NSF grant DMS-1701577.
The second author was partly supported by NSF grants DMS-1401224 and DMS-1701576,
and  by a Simons  Fellowship in Mathematics in 2019.

 
%
%

  \section{Asymptotics for the sums  $B(n)$ and $A(n)$}\label{sec:2}

\par In this section we first  obtain  asymptotics for the functions   $B(n)=\sum_{p\leq n} \frac{n-1}{p-1}d_{p}(n)\log p$,
given in Theorem \ref{thm:ABn}(2). At the  the end we deduce asymptotics for $A(n) =\sum_{p\leq n} \frac{2}{p-1}S_{p}(n)\log p$.
  
  %
%

\subsection{Preliminary Reduction}\label{subsec:21}

We study $B(n)$ and reduce the main sum to primes in the range $\sqrt{n} < p \le n$. 
We write
\begin{equation}
B(n) = B_1(n) + B_R(n)
\end{equation}
where 
\begin{equation}\label{eq:B1}
B_1(n) :=\sum_{\sqrt{n}  <p\leq n } \frac{n-1}{p-1}d_{p}(n)\log p
\end{equation}
and 
\begin{equation}\label{eq:BR}
B_R(n) :=\sum_{1 <p\leq \sqrt{n}} \frac{n-1}{p-1}d_{p}(n)\log p.
\end{equation} 
is a remainder term coming from small $p$ (relative to $n$) that makes a negligible contribution to the asymptotics

%
%
\begin{lem}\label{lem:21}
 For $n \ge 2$ 
\begin{equation}
B_{R}(n) \le 4 \, n^{3/2} .
\end{equation}
\end{lem}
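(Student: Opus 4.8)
For $n=2$ and $n=3$ there are no primes $p$ with $1<p\le\sqrt n$, so $B_R(n)=0$ and the bound is trivial; assume $n\ge 4$. The plan is to bound each summand of $B_R(n)$ by a quantity of size $O(n\log n)$, then sum over the $\pi(\sqrt n)$ admissible primes, using a Chebyshev-type estimate for $\pi(\sqrt n)$ to absorb the logarithm. Write $m_p:=\lfloor\log_p n\rfloor+1$ for the number of base $p$ digits of $n$. Since every digit satisfies $a_i(p,n)\le p-1$,
\begin{equation*}
d_p(n)=\sum_{i=0}^{m_p-1}a_i(p,n)\ \le\ (p-1)\,m_p .
\end{equation*}
The leading digit of $n$ sits in position $m_p-1$, so $p^{\,m_p-1}\le n$, hence $(m_p-1)\log p\le\log n$; and for $p\le\sqrt n$ we also have $\log p\le\tfrac12\log n$. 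Combining these two facts,
\begin{equation*}
m_p\log p\ \le\ \log n+\log p\ \le\ \tfrac32\log n .
\end{equation*}

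Consequently each term obeys $\frac{n-1}{p-1}\,d_p(n)\log p\le (n-1)\,m_p\log p\le \tfrac32(n-1)\log n$, where the essential point is that the troublesome factor $\frac{1}{p-1}$ cancels against the $(p-1)$ produced by the digit bound. Summing over the $\pi(\sqrt n)$ primes $p\le\sqrt n$ gives
\begin{equation*}
B_R(n)\ \le\ \tfrac32\,(n-1)\,\pi(\sqrt n)\,\log n .
\end{equation*}
I would finish by invoking an explicit prime-counting estimate to remove the logarithm: using $\pi(y)\le c\,y/\log y$ for $y>1$ with a small explicit constant $c$ (for instance $c=1.26$, from the Rosser--Schoenfeld bound $\pi(y)<1.25506\,y/\log y$) yields $\pi(\sqrt n)\log n\le 2c\,\sqrt n$, and hence $B_R(n)\le 3c\,(n-1)\,\sqrt n<4\,n^{3/2}$ for all $n\ge4$. (An alternative that avoids the $\pi$-bound: expand $\sum_{p\le\sqrt n}m_p\log p=3\,\vartheta(\sqrt n)+\sum_{j\ge 3}\vartheta(n^{1/j})$, bound the tail by $(\log_2 n)\,\vartheta(n^{1/3})=O(n^{1/3}\log n)$, and apply an explicit form of $\vartheta(y)=O(y)$; this again produces a constant below $4$.)

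I do not expect a genuine obstacle here — it is a one-page estimate — but there is one trap to flag: bounding $\pi(\sqrt n)$ by the trivial $\sqrt n$ (equivalently, pairing $\vartheta(\sqrt n)$ with the digit count in the wrong order) costs an extra factor $\log n$ and only delivers $B_R(n)=O(n^{3/2}\log n)$. The saving $\pi(y)\ll y/\log y$ coming from Chebyshev (or the prime number theorem) is precisely what converts this into the clean $O(n^{3/2})$, and a mildly explicit constant in that estimate is all that is required to reach the stated bound $4\,n^{3/2}$.
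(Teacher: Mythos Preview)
Your proof is correct and follows essentially the same route as the paper: bound $d_p(n)\le(p-1)(\lfloor\log_p n\rfloor+1)$, observe that this gives each summand at most $(n-1)(\log n+\log p)\le\tfrac32(n-1)\log n$, sum over $p\le\sqrt n$, and finish with the Rosser--Schoenfeld inequality $\pi(x)\le 1.25506\,x/\log x$ to obtain $B_R(n)\le 4n^{3/2}$. Your explicit handling of the cases $n=2,3$ and the alternative $\vartheta$-based argument are extra but harmless.
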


\begin{proof}
One has   
$d_p(n) \le (p-1) \left(\frac{\log n}{\log p} +1\right).$
Consequently
\begin{align*}
B_R(n) &
\le  \sum_{p \le \sqrt{n}} (n-1)(\log n + \log p)\\
&\le  \sum_{p \le \sqrt{n}} (n-1)(\log n + \log \sqrt{n})\\
&\le \frac{3}{2}n \pi(\sqrt{n}) \log n  \\
&\le 4\, n^{3/2},
\end{align*} 
The rightmost inequality used the  estimate,  valid  for  $x >1$, that 
\begin{equation} \label{eq:pix_upper} 
\pi(x) \le  1.25506\frac{x}{\log x},
\end{equation}
  see  Rosser and Schoenfeld \cite[eqn. (3.6)]{RosSch62}.
\end{proof}

%
%

\subsection{ Estimate for $B_{1}(n)$: radix expansion}\label{subsec:22}
\label{sec:B11asympt}

\par
 We   estimate  $B_1(n)$ starting from the observation that   for primes $ \sqrt{n} < p \le n $, 
 the base $p$ radix  expansion of $n$ for $ \sqrt{n} < p \le n $,
 has exactly $2$ digits. 
 
%
%
\begin{lem}\label{lem:diff1} 
 For $n \ge 2$  and all primes $\sqrt{n}< p\le n$, one has 
  $$d_p(n) = n-  (p-1)\left\lfloor \frac{n}{p} \right\rfloor.$$
In consequence  for all primes $ \sqrt{n}< p \le n$ lying in the  interval $I_j=(\frac{n}{j+1},\frac{n}{j}] $, where
$j = \lfloor \frac{n}{p}\rfloor$,   and $1 \le j < \sqrt{n}$,  one has 
\begin{equation}\label{eqn:digitsum}
\frac{n-1}{p-1} d_p(n) \log p = (n-1) \left(  \frac{n \log p }{p-1}-  j \log p\right).  
\end{equation} 
 \end{lem}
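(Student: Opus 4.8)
The plan is to prove Lemma \ref{lem:diff1} by a direct analysis of the base $p$ radix expansion of $n$ when $\sqrt{n} < p \le n$, which has exactly two digits. First I would note that for such primes we have $p \le n < p^2$, so in the radix expansion $n = a_1 p + a_0$ with $0 \le a_0, a_1 \le p-1$ the top digit $a_1$ is nonzero and there are no higher digits. By the floor recursion \eqref{eqn:floor-recursion} (with $b=p$, $i=1$), $a_1 = \lfloor n/p \rfloor$, and since $\lfloor n/p^2 \rfloor = 0$ this is just $a_1 = \lfloor n/p \rfloor$. Then $a_0 = n - p a_1 = n - p \lfloor n/p \rfloor$. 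Hence
\[
d_p(n) = a_0 + a_1 = \left(n - p \left\lfloor \frac{n}{p} \right\rfloor\right) + \left\lfloor \frac{n}{p} \right\rfloor = n - (p-1)\left\lfloor \frac{n}{p}\right\rfloor,
\]
which is the first claimed identity. I should double-check the edge behavior: when $p = n$ itself (if $n$ is prime), $\lfloor n/p \rfloor = 1$ and $d_p(n) = n - (p-1) = 1$, consistent with $n = (10)_p$; and more generally one should confirm $0 \le a_0 \le p-1$ holds, which it does since $a_0 = n \bmod p$.

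Next I would translate the identity into the weighted form \eqref{eqn:digitsum}. Setting $j = \lfloor n/p \rfloor$, the condition $\sqrt{n} < p \le n$ forces $1 \le j \le \sqrt{n}$; more precisely $p \in I_j = (\tfrac{n}{j+1}, \tfrac{n}{j}]$ is exactly the statement $j = \lfloor n/p \rfloor$. The excerpt writes $1 \le j < \sqrt{n}$, which is essentially the same (one should be mildly careful about whether $j$ can equal $\lfloor \sqrt n \rfloor$ at the boundary, but this is a harmless indexing point that does not affect the sum). Then multiplying $d_p(n) = n - (p-1) j$ by $\frac{n-1}{p-1}\log p$ gives
\[
\frac{n-1}{p-1} d_p(n) \log p = \frac{n-1}{p-1}\left(n - (p-1)j\right)\log p = (n-1)\left(\frac{n\log p}{p-1} - j \log p\right),
\]
which is \eqref{eqn:digitsum}. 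This is pure algebra once the digit identity is in hand.

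Honestly, there is no serious obstacle here: the entire content is the observation that two-digit base-$p$ expansions of $n$ occur precisely for $\sqrt n < p \le n$, plus the elementary digit formula. The only thing requiring a little care is bookkeeping at the interval endpoints — confirming that the half-open intervals $I_j$ partition $(\sqrt n, n]$ correctly and that the range of $j$ is stated consistently with the later summation — and the (trivial) verification that $a_0, a_1$ genuinely lie in $\{0,\dots,p-1\}$ so that we are looking at the legitimate radix expansion. I would present the digit computation first, then the algebraic rearrangement, and simply remark on the interval indexing without belaboring it, since the later estimates for $B_1(n)$ will sum \eqref{eqn:digitsum} over $p \in I_j$ and over $j$.
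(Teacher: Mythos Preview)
Your proof is correct and follows essentially the same approach as the paper: compute the two base-$p$ digits $a_1=\lfloor n/p\rfloor$ and $a_0=n-p\lfloor n/p\rfloor$ for $\sqrt{n}<p\le n$, sum them to get $d_p(n)=n-(p-1)\lfloor n/p\rfloor$, and then multiply by $\frac{n-1}{p-1}\log p$ to obtain \eqref{eqn:digitsum}. The only minor addition in the paper is a remark that the digit-sum identity is a special case of the general formula $d_p(n)=n-(p-1)\sum_{k\ge 1}\lfloor n/p^k\rfloor$ coming from Legendre's formula for $\nu_p(n!)$, but this is not needed for the argument.
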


\begin{proof}

 For $\sqrt{n} <p< n$
the integer $n$ has  exactly two base $p$ digits, 
 $n  =  a_1 p+a_0$. Here  $a_1(n)=\lfloor \frac{n}{p} \rfloor$, corresponding to $p \in I_j=(\frac{n}{j+1},\frac{n}{j}] $, 
  the trailing digit $a_0(n) = n- p \lfloor \frac{n}{p} \rfloor  $ , whence
$$
d_p(n) = a_0(n) + a_1(n)= (n- p \lfloor \frac{n}{p} \rfloor ) + \lfloor \frac{n}{p} \rfloor =  n-  (p-1) \lfloor \frac{n}{p} \rfloor.
$$
This formula is a special case of  $d_p(n) = n- (p-1)\left( \sum_{k=1}^{\infty} \lfloor \frac{n}{p^k}\rfloor\right)$,
which  follows from computing $\nu_p(n!)$ two ways, the first being the Legendre formula $\nu_p(n!)= \sum_{k=1}^{\infty} \lfloor \frac{n}{p^k}\rfloor$
and the second being  $\nu_p(n!)  = \frac{n- d_p(n)}{p-1}$, 
 see Hasse \cite[Chap. 17, sect. 3]{Hasse80}. 
 
 Now the condition  $j= \lfloor \frac{n}{p} \rfloor$ corresponds to $p \in I_j =(\frac{n}{j+1},\frac{n}{j}]$, 
 and \eqref{eqn:digitsum} follows by substitution of the value of $d_p(n)$ when $p > \sqrt(n)$.
  Note  that   the intervals $I_j$ for $1 \le j < \sqrt{n}$ cover the entire interval $\sqrt{n}< p \le n$
  (and may include some $p \le \sqrt{n}$ in the last interval, where $n$ has three digits in its base $p$ radix expansion). 
\end{proof} 

We use the identity  \eqref{eqn:digitsum} to  split the sum $B_1(n)$ into two parts: 
\begin{equation}
B_1(n)= B_{11}(n)- B_{12}(n) ,
\end{equation}
in which 
\begin{equation}
\label{eqn:B11}
B_{11}(n) :=n(n-1)  \sum_{\sqrt{n} < p\leq n} \frac{\log p }{p-1}, 
\end{equation}
and
\begin{equation}
\label{eqn:B12}
B_{12}(n) := (n-1)\sum_{j=1}^{\lfloor  \sqrt{n} \,\rfloor } \, j\Bigg[\sideset{}{'}\sum_{\frac{n}{j+1}<p\leq \frac{n}{j}}  \log p\Bigg].
\end{equation}
where the prime in the inner sum means  only $p> \sqrt{n}$ are included. 
(The prime only affects one term in the sum.) 
The sums $B_{11}(n)$ and $B_{12}(n)$ are  of comparable sizes, on the order of  $n^2$.
We estimate them separately.

%
%

\subsection{ Estimate for $B_{11}(n)$}
\label{subsec:B21asympt}

The first quantity $B_{11}(n)$ is a standard sum in number theory.

%
%
\begin{thm}
 \label{thm:b1} 
  Let 
$$B_{11}(n) =n(n-1)  \sum_{\sqrt{n}< p\leq n} \frac{\log p }{p-1}.$$

(1) There is an absolute constant $c>0$ such that for $n \ge 4$, 
\begin{equation} 
\label{eqn:Uncond-B1-asymp}
B_{11}(n)=\frac{1}{2}n^{2}\log (n)+O(n^{2}e^{-c/2 \,\sqrt{\log n}}).
\end{equation} 
 
 (2) Assuming the Riemann hypothesis we have
 \begin{equation}
 \label{eqn:RH-B1-asymp}
 B_{11}(n) = \frac{1}{2}n^{2}\log (n)+O\left( n^{7/4} (\log n)^2\right).
 \end{equation}  
\end{thm}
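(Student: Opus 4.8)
The plan is to reduce $B_{11}(n)$ to the Mertens-type partial sum $\sum_{\sqrt n<p\le n}\frac{\log p}{p}$ and to evaluate that by partial summation against the Chebyshev function $\vartheta(t)=\sum_{p\le t}\log p$, feeding in the prime number theorem with classical remainder term in case (1) and its Riemann-hypothesis refinement in case (2).

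First I would strip off the ``$-1$'' in the denominator: since $\frac1{p-1}=\frac1p+\frac1{p(p-1)}$,
\[
\sum_{\sqrt n<p\le n}\frac{\log p}{p-1}=\sum_{\sqrt n<p\le n}\frac{\log p}{p}+\sum_{\sqrt n<p\le n}\frac{\log p}{p(p-1)},
\]
and the last sum is $O\!\left(\frac{\log n}{\sqrt n}\right)$ by comparison with $\int_{\sqrt n}^{\infty}\frac{\log t}{t(t-1)}\,dt$. Multiplied by $n(n-1)$ this contributes $O(n^{3/2}\log n)$, which is negligible against both claimed error terms (note $n^{3/2}\log n=n^2 n^{-1/2}\log n\ll n^2 e^{-c\sqrt{\log n}}$ and $\ll n^{7/4}(\log n)^2$). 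So it suffices to estimate $n(n-1)\sum_{\sqrt n<p\le n}\frac{\log p}{p}$.

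Next, writing $\vartheta(t)=t+r(t)$, Abel summation gives for $x\ge 2$
\[
\sum_{p\le x}\frac{\log p}{p}=\frac{\vartheta(x)}{x}+\int_2^x\frac{\vartheta(t)}{t^2}\,dt=\log x+(1-\log 2)+\frac{r(x)}{x}+\int_2^x\frac{r(t)}{t^2}\,dt .
\]
Subtracting the values at $x=n$ and $x=\sqrt n$, the constant cancels and
\[
\sum_{\sqrt n<p\le n}\frac{\log p}{p}=\tfrac12\log n+\frac{r(n)}{n}-\frac{r(\sqrt n)}{\sqrt n}+\int_{\sqrt n}^{n}\frac{r(t)}{t^2}\,dt .
\]
Unconditionally the prime number theorem gives $r(t)=O\!\big(t\,e^{-c\sqrt{\log t}}\big)$ for some $c>0$ (the bound for $\vartheta$ follows from that for $\psi$ since $\psi-\vartheta=O(\sqrt t\,\log t)$); as $e^{-c\sqrt{\log t}}$ is decreasing, each of the three error terms above is $O\!\big((\log n)\,e^{-(c/\sqrt2)\sqrt{\log n}}\big)$, and absorbing the factor $\log n$ into the exponential (shrinking the constant) yields $\sum_{\sqrt n<p\le n}\frac{\log p}{p}=\tfrac12\log n+O\!\big(e^{-c'\sqrt{\log n}}\big)$. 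Under the Riemann hypothesis one has instead $r(t)=O\!\big(\sqrt t\,(\log t)^2\big)$, and then $\frac{r(n)}{n}$, $\frac{r(\sqrt n)}{\sqrt n}$, and $\int_{\sqrt n}^{\infty}\frac{(\log t)^2}{t^{3/2}}\,dt$ are all $O\!\big(n^{-1/4}(\log n)^2\big)$, giving $\sum_{\sqrt n<p\le n}\frac{\log p}{p}=\tfrac12\log n+O\!\big(n^{-1/4}(\log n)^2\big)$.

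Finally I would multiply through by $n(n-1)$, add back the $O(n^{3/2}\log n)$ from the first step, and replace $\tfrac12 n(n-1)\log n$ by $\tfrac12 n^2\log n$ at a cost of $O(n\log n)$; the error terms collapse to $O(n^2 e^{-c'\sqrt{\log n}})$ in case (1) and $O(n^{7/4}(\log n)^2)$ in case (2), which are exactly \eqref{eqn:Uncond-B1-asymp} and \eqref{eqn:RH-B1-asymp}. There is no serious analytic obstacle here once the prime number theorem with remainder is taken as a black box; the only point requiring care is the bookkeeping in case (1) — verifying that the stray logarithmic factors can be absorbed into $e^{-c\sqrt{\log n}}$ after shrinking the constant, and that the $p-1\to p$ and $n(n-1)\to n^2$ replacements are genuinely of lower order than the stated remainder.
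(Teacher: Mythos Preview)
Your proof is correct and follows essentially the same approach as the paper: replace $\frac{1}{p-1}$ by $\frac{1}{p}$ at negligible cost, evaluate $\sum_{\sqrt n<p\le n}\frac{\log p}{p}$ from the estimate $\sum_{p\le x}\frac{\log p}{p}=\log x+C+\text{(PNT remainder)}$ at $x=n$ and $x=\sqrt n$, then multiply by $n(n-1)$. The only cosmetic difference is that the paper quotes the Mertens-type estimate with remainder from Rosser--Schoenfeld (unconditional) and Schoenfeld (RH) as a black box, whereas you re-derive it on the spot via Abel summation against $\vartheta$; the bookkeeping you flag in case (1) is exactly the point the paper handles by citing the known result.
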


We prove this result after a series of preliminary lemmas. 
As a first reduction, we show that $\frac{\log p }{p-1}$ may be approximated by $\frac{\log p}{p}$.
with a power savings error for $p > \sqrt{n}.$
%
%
\begin{lem}\label{lem:diff2} 
We have, unconditionally,  
\begin{equation}\label{eqn:primesum}
\sum_{\sqrt{n} < p\leq n} \frac{\log p}{p-1}= \sum_{\sqrt{n} < p\leq n} \frac{\log p}{p}+O\left(\frac{1}{\sqrt{n}} \right).
\end{equation}
 \end{lem}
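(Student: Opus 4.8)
The plan is to write $\frac{1}{p-1} = \frac{1}{p} + \frac{1}{p(p-1)}$, so that
\[
\sum_{\sqrt{n} < p \le n} \frac{\log p}{p-1} = \sum_{\sqrt{n} < p \le n} \frac{\log p}{p} + \sum_{\sqrt{n} < p \le n} \frac{\log p}{p(p-1)},
\]
and then to bound the second sum. Since every term is positive, it suffices to estimate $\sum_{p > \sqrt{n}} \frac{\log p}{p(p-1)}$ from above, and in fact one may even bound $\sum_{m > \sqrt{n}} \frac{\log m}{m(m-1)}$ over all integers $m$. The hard part is essentially trivial here: this tail sum is $O\!\left(\sum_{m > \sqrt{n}} \frac{\log m}{m^2}\right)$, which by comparison with the integral $\int_{\sqrt{n}}^\infty \frac{\log t}{t^2}\,dt = \frac{\log \sqrt{n} + 1}{\sqrt{n}}$ is $O\!\left(\frac{\log n}{\sqrt{n}}\right)$.

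This gives $O\!\left(\frac{\log n}{\sqrt{n}}\right)$ rather than the claimed $O\!\left(\frac{1}{\sqrt{n}}\right)$, so to recover the stated bound I would instead split the range slightly differently or absorb the logarithm: replace $\frac{1}{p(p-1)} \le \frac{2}{p^2}$ for $p \ge 2$ and note that for $p > \sqrt{n}$ we have $\log p \le 2 \log p \cdot \frac{p}{p} $, but more cleanly, use $\frac{\log p}{p(p-1)} \le \frac{\log p}{p^{3/2}} \cdot \frac{1}{p^{1/2} - p^{-1/2}}$... Actually the cleanest route: for $p > \sqrt{n}$ one has $\log p \le C p^{\varepsilon}$ trivially, but even simpler, $\sum_{p > \sqrt n} \frac{\log p}{p(p-1)} \le \sum_{p > \sqrt n} \frac{\log p}{p^2}$, and since $\frac{\log t}{t^{1/2}}$ is bounded on $t \ge 1$ (by some absolute constant $C_0$), we get $\frac{\log p}{p^2} \le \frac{C_0}{p^{3/2}}$, whence the tail is $O\!\left(\sum_{m > \sqrt n} m^{-3/2}\right) = O(n^{-1/4})$. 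That is weaker still, so the right move is simply to accept the $O\!\left(\frac{\log n}{\sqrt n}\right)$ bound and observe it is absorbed, OR to note that the statement as written is fine because $\frac{\log p}{p(p-1)}$ summed over $p > \sqrt n$ is genuinely $O(1/\sqrt n)$: indeed $\sum_{p > \sqrt n} \frac{\log p}{p(p-1)} \ll \frac{1}{\sqrt n}\sum_{p > \sqrt n} \frac{\log p}{p^{3/2}}\cdot \sqrt{n}/p^{1/2}$ — I would just do the integral comparison carefully, writing $\frac{\log p}{p(p-1)} \le \frac{2\log p}{p^2}$ and $\sum_{m \ge M} \frac{2\log m}{m^2} \le \int_{M-1}^\infty \frac{2\log t}{t^2}dt = \frac{2(\log(M-1)+1)}{M-1}$ with $M = \lceil \sqrt n \rceil$, which is $O\!\left(\frac{\log n}{\sqrt n}\right)$.

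The honest conclusion is that the natural bound is $O\!\left(\frac{\log n}{\sqrt n}\right)$, and since this is itself $O(1)$ and far smaller than the eventual error terms $n^2 e^{-c\sqrt{\log n}}$ and $n^{7/4}(\log n)^2$ after multiplication by the $n(n-1)$ prefactor in $B_{11}(n)$, I would present the proof with the clean integral estimate and simply remark that the logarithmic factor is harmless; if the paper insists on $O(1/\sqrt n)$ verbatim, the fix is to pull out one factor: $\frac{\log p}{p(p-1)} \le \frac{\log p}{p^{3/2}} \cdot \frac{1}{\sqrt{p}-1/\sqrt p} \le \frac{4}{p^{3/2}}$ for $p > \sqrt n \ge $ some threshold (using $\log p = o(\sqrt p)$ with an explicit constant, e.g. $\log p \le 4\sqrt p$ for all $p \ge 2$), giving tail $\le 4\sum_{m > \sqrt n} m^{-3/2} \le 8 n^{-1/4}$ — still not $1/\sqrt n$. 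So the genuinely correct statement has a $\log n$; I expect the author's proof either tolerates this or uses the cruder $\sum \log p/p^2 \ll \sum 1/p^{3/2}$. In any case, no serious obstacle arises: the entire content is the convergence of a tail of a rapidly decaying series, handled by monotone comparison with $\int \frac{\log t}{t^2}\,dt$, and the result follows from the elementary identity $\frac{1}{p-1} - \frac{1}{p} = \frac{1}{p(p-1)}$ summed over the indicated primes.
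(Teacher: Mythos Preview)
Your approach is the same as the paper's: both take the difference $\frac{1}{p-1}-\frac{1}{p}=\frac{1}{p(p-1)}$, bound $\frac{1}{p(p-1)}\le \frac{2}{p^2}$, and then estimate the tail $\sum_{p>\sqrt n}\frac{\log p}{p^2}$. The paper simply asserts this tail is $O(1/\sqrt n)$ without further comment.

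Where your proposal falls short is in justifying that last step. You repeatedly compare with the sum over \emph{all} integers $m>\sqrt n$, which inevitably produces the extra $\log n$ factor, and your alternative bound $\log p\le 4\sqrt p$ is far too crude (it loses a full power $n^{1/4}$). The point you are missing is that the sum runs over \emph{primes}, and the density of primes exactly cancels the logarithm. By partial summation together with the Chebyshev estimate $\vartheta(t)=\sum_{p\le t}\log p\ll t$ (already available in the paper),
\[
\sum_{p>M}\frac{\log p}{p^2}
=\int_{M}^{\infty}\frac{d\vartheta(t)}{t^2}
=-\frac{\vartheta(M)}{M^2}+2\int_{M}^{\infty}\frac{\vartheta(t)}{t^3}\,dt
\ll \frac{1}{M}+\int_{M}^{\infty}\frac{dt}{t^2}
=O\!\left(\frac{1}{M}\right),
\]
and setting $M=\sqrt n$ gives the stated $O(1/\sqrt n)$. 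So the lemma as written is correct, with no logarithmic loss; you are right that the weaker $O(\log n/\sqrt n)$ would be harmless downstream, but as a proof of the lemma itself your argument has this small gap.
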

 
\begin{proof}
We have 
\begin{eqnarray*} 
\sum_{\sqrt{n}< p\leq n} \left(\frac{\log p}{p-1}-\frac{\log p}{p}\right)
& = & \sum_{\sqrt{n}< p\leq n} \frac {\log p}{p(p-1)}
 \le  2\sum_{\sqrt{n}< p\leq n} \frac {\log p}{p^{2}}=O\left(\frac{1}{\sqrt{n}} \right)\\
\end{eqnarray*}
as required.
\end{proof} 

\par 
To estimate the sum on the right side of \eqref{eqn:primesum},
we study  $h(n) := \sum_{p\leq n} \frac{\log p}{p}$.
Merten's first theorem says that  the function 
$h(n)= \log n + O(1)$
 (see \cite[Theorem 425]{HW79}, \cite[Sect. I.4]{Ten15}). 
 Here  we need an estimate with a better remainder term. 

%
%

\begin{lem}
\label{lem:25} 

(1) There is a constant $c_2= \gamma- c_1$,
where $c_1= \sum_{p} \sum_{k=2}^{\infty} \frac{\log p}{p^k}$ such that, for $x \ge 4$, 
 $$
h(x) :=  \sum_{p\leq x}\frac{\log p}{p}  = \log x+ c_2 + 
 O\left(e^{-c\sqrt{\log x}}\right),
 $$
 
 (2) Assuming the Riemann hypothesis, for $x \ge 4$, 
  $$
h(x) :=   \sum_{p\leq x}\frac{\log p}{p} =
 \log x+   c_2 + O\left(x^{-1/2}(\log x)^2\right),
 $$
 \end{lem}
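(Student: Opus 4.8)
The plan is to express $h(x) = \sum_{p \le x} \frac{\log p}{p}$ in terms of the Chebyshev function $\vartheta(x) = \sum_{p\le x} \log p$ by partial summation, and then feed in quantitative forms of the prime number theorem. First I would write $h(x) = \int_{2^-}^{x} \frac{1}{t}\, d\vartheta(t)$ and integrate by parts to obtain
\begin{equation*}
h(x) = \frac{\vartheta(x)}{x} + \int_{2}^{x} \frac{\vartheta(t)}{t^2}\, dt + O(1).
\end{equation*}
Using $\vartheta(t) = t + E(t)$ where $E(t)$ is the error term in the prime number theorem, the main contribution is $\int_{2}^{x} \frac{1}{t}\, dt = \log x + O(1)$, plus $\frac{\vartheta(x)}{x} = 1 + O(E(x)/x)$, plus the tail $\int_{2}^{x} \frac{E(t)}{t^2}\, dt$. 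Since $E(t)/t^2$ is integrable over $[2,\infty)$ in both the unconditional and RH regimes (the bounds on $E(t)$ are $O(t\exp(-c\sqrt{\log t}))$ unconditionally and $O(t^{1/2}(\log t)^2)$ on RH, both of which make $E(t)/t^2$ decay fast enough), the integral $\int_{2}^{\infty} \frac{E(t)}{t^2}\, dt$ converges to a constant, and the remainder is $\int_{x}^{\infty}\frac{E(t)}{t^2}\,dt$, which is $O(\exp(-c\sqrt{\log x}))$ unconditionally and $O(x^{-1/2}(\log x)^2)$ on RH, matching the claimed error terms.

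This shows $h(x) = \log x + c_2' + (\text{error})$ for some constant $c_2'$; the remaining task, and the place where the precise value of $c_2$ enters, is to identify $c_2' = \gamma - c_1$ with $c_1 = \sum_p \sum_{k\ge 2} \frac{\log p}{p^k}$. For this I would not re-derive the constant from the integral representation above, but instead invoke the classical Mertens-type identity: it is standard (e.g.\ from the logarithmic derivative of $\zeta(s)$ near $s=1$, or from \cite[Sect. I.4]{Ten15}) that $\sum_{p}\sum_{k\ge 1}\frac{\log p}{p^k} - \log x \to -\gamma$ as the sum is truncated appropriately; equivalently $\sum_{n\le x} \frac{\Lambda(n)}{n} = \log x - \gamma + o(1)$. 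Subtracting the convergent series $c_1 = \sum_p\sum_{k\ge 2}\frac{\log p}{p^k}$ over prime powers with $k \ge 2$ gives $h(x) = \log x - \gamma - c_1 + o(1) = \log x + c_2 + o(1)$ with $c_2 = -\gamma - c_1$. Wait — comparing signs with the statement, the lemma asserts $c_2 = \gamma - c_1$; I would double-check the sign by matching against Mertens' theorem $\sum_{p\le x}\frac{1}{p} = \log\log x + M + o(1)$ versus the $\log p/p$ version, but in any case the constant is pinned down uniquely by the already-established asymptotic $h(x) = \log x + c_2' + o(1)$ together with the known classical constant, so the identification is forced once one computes carefully.

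The main obstacle is bookkeeping the constant $c_2$ correctly rather than any analytic difficulty: the error-term estimates follow immediately from standard PNT bounds (unconditional zero-free region, or RH) once partial summation reduces everything to $\vartheta(x)$, and the convergence of $\int_x^\infty E(t)/t^2\,dt$ is routine. I would organize the proof as: (i) partial summation to pass from $h(x)$ to an integral in $\vartheta$; (ii) substitute $\vartheta(t) = t + E(t)$ and split off $\log x$ plus a convergent constant plus a tail integral; (iii) bound the tail using the appropriate form of the PNT error term in each of the two cases; (iv) identify the constant via the classical Mertens identity involving $\gamma$ and the prime-power correction $c_1$. Steps (i)–(iii) are a few lines each; step (iv) should cite \cite{Ten15} or \cite{HW79} for the value of the constant and note that $c_1 = \sum_p\sum_{k\ge 2}\frac{\log p}{p^k}$ is the standard correction from prime powers.
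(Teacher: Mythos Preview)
Your approach is correct, and it is more explicit than the paper's: the paper does not give a self-contained argument at all but simply cites Rosser--Schoenfeld \cite[eqn.~(2.31)]{RosSch62} for part (1) and Schoenfeld \cite[eqn.~(6.22)]{Sch76} for part (2). Your partial-summation reduction to $\vartheta$ followed by substitution of the PNT error term $E(t)$ and bounding the tail integral $\int_x^\infty E(t)/t^2\,dt$ is the standard way to derive these cited results, so your proposal effectively reconstructs what lies behind the references.

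Regarding the constant: your suspicion about the sign is well founded. From $-\zeta'(s)/\zeta(s) = (s-1)^{-1} - \gamma + O(s-1)$ one gets $\sum_{n\le x}\Lambda(n)/n = \log x - \gamma + o(1)$, and subtracting the convergent prime-power tail $c_1=\sum_p\sum_{k\ge 2}\frac{\log p}{p^k}$ gives $c_2 = -\gamma - c_1$, not $\gamma - c_1$. Numerically $-\gamma - c_1 \approx -1.3326$, which is the constant $E$ appearing in Rosser--Schoenfeld, confirming your computation. The stated value $c_2=\gamma-c_1$ in the lemma appears to be a typo in the paper; it is harmless for the sequel because in the proof of Theorem~\ref{thm:b1} only the difference $h(n)-h(\sqrt{n})$ is used, where $c_2$ cancels.
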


\begin{proof}
(1) This result appears in Rosser and Schoenfeld \cite[eqn. (2.31)]{RosSch62}. 

(2) This result appears in Schoenfeld \cite[eqn. (6.22)]{Sch76}.
\end{proof}

%
 %
\begin{defn}\label{def:26} 
(1) The {\em  first Chebyshev function } $\vartheta(n)$, is defined by 
$$\vartheta(n)=\sum_{p \leq n} \log p.  $$ 

(2) The {\em second Chebyshev function}  $\psi(n)$ is defined by
$$
\psi(n) = \sum_{{p,\,k}\atop{p^k \le n}} \log p = \sum_{m=1}^n \Lambda(m).
$$
\end{defn}

Here $\psi (n) = \vartheta(n) + \vartheta (n^{1/2}) + \vartheta( n^{1/3}) + \cdots.$  Using
the Chebyshev style estimate  $\vartheta(n) \le 5n$ given in \eqref{eq:pix_upper}, one has  
$$
 \vartheta(n) \le \psi(n) \le \vartheta(n) +  5 \sqrt{n} \log n.
$$

We recall known  bounds for $\vartheta(n)$.
%
%
\begin{lem}
  \label{lem:26}
{\rm (Chebyshev function estimates)}

(1) There is a constant $c>0$ such that, for $x \ge 4$, 
 $$
\vartheta(x) =  \sum_{p\leq x}\log p = x+ 
 O\left(xe^{-c\sqrt{\log x}}\right),
 $$
 
(2) Assuming the Riemann hypothesis, for $x \ge 4$, 
  $$
\vartheta(x) =   \sum_{p\leq x}\log p = x+ 
 O\left(\sqrt{x} (\log x)^2\right),
 $$
 \end{lem}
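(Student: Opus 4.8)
The plan is to deduce both parts from the corresponding quantitative forms of the prime number theorem for the second Chebyshev function $\psi(x)$, and then to pass from $\psi$ to $\vartheta$ by the elementary comparison already recorded in the excerpt.

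\emph{Step 1: the $\psi$-estimates.} Unconditionally, the de la Vall\'ee Poussin zero-free region $\zeta(s)\neq 0$ for $\mathrm{Re}(s) > 1 - c/\log(|\mathrm{Im}(s)|+2)$, fed into a truncated Perron formula for $-\zeta'(s)/\zeta(s)$ followed by a contour shift and the standard bounds on $\zeta'/\zeta$ near the $1$-line, yields $\psi(x) = x + O\!\left(x\exp(-c'\sqrt{\log x})\right)$ for a suitable $c'>0$; explicit constants are worked out in Rosser--Schoenfeld \cite{RosSch62}. Conditionally on the Riemann hypothesis, the explicit formula $\psi(x) = x - \sum_{\rho}\frac{x^{\rho}}{\rho} - \log(2\pi) - \tfrac12\log(1-x^{-2})$, combined with $\mathrm{Re}(\rho)=\tfrac12$ for every nontrivial zero and the zero-counting estimate $N(T)=O(T\log T)$, gives $|\psi(x)-x| \le \sqrt{x}\bigl(\sum_{|\mathrm{Im}(\rho)|\le T}|\rho|^{-1}\bigr) + O\!\left(xT^{-1}(\log x)^2\right) + O(\log x)$; choosing $T$ to be an appropriate power of $x$ produces $\psi(x) = x + O\!\left(\sqrt{x}(\log x)^2\right)$, as in Schoenfeld \cite{Sch76}. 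In the write-up these two facts would be cited rather than reproved, exactly as Lemma \ref{lem:25} cites the analogous estimates for $\sum_{p\le x}\frac{\log p}{p}$.

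\emph{Step 2: passage to $\vartheta$.} Here I would use the inequality $\vartheta(x) \le \psi(x) \le \vartheta(x) + 5\sqrt{x}\log x$ displayed just before the statement, which follows from $\psi(x)=\sum_{k\ge 1}\vartheta(x^{1/k})$ together with the Chebyshev bound $\vartheta(y)\le 5y$ coming from \eqref{eq:pix_upper}. Thus $\psi(x) - \vartheta(x) = O(\sqrt{x}\log x)$, and substituting the two $\psi$-estimates yields both parts of the lemma: the term $\sqrt{x}\log x$ is absorbed into $O\!\left(x\exp(-c\sqrt{\log x})\right)$ once $x$ is large and trivially into $O\!\left(\sqrt{x}(\log x)^2\right)$, while the finitely many remaining $x$ with $4\le x\le x_0$ are handled by enlarging the implied constant.

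\emph{Main obstacle.} All the genuine analytic content is concentrated in Step 1 --- the contour-integration/explicit-formula proof of the prime number theorem with these precise error terms --- and reproving it in detail would be a long detour unrelated to the rest of the paper. Since sharp explicit versions are already available in Rosser--Schoenfeld and Schoenfeld, the efficient course is simply to cite them; the only argument that genuinely belongs here is the one-line reduction from $\psi$ to $\vartheta$ in Step 2.
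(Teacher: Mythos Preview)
Your proposal is correct and in the same spirit as the paper: the paper's proof is even terser, simply citing \cite[Theorem 6.9]{MV07} for (1) and \cite[Theorem 13.1]{MV07} for (2), which give the $\vartheta$-estimates directly without the detour through $\psi$. Your route via $\psi$ and the comparison $\psi(x)-\vartheta(x)=O(\sqrt{x}\log x)$ is equally valid and equally standard, so there is nothing to correct.
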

\begin{proof} 
(1) is given in \cite[Theorem 6.9]{MV07}. 

(2) is given in \cite[Theorem 13.1]{MV07}.
\end{proof}

\begin{proof}[Proof of Theorem \ref{thm:b1}.]
Recall $B_{11}(n) = n(n-1)\left(\sum_{\sqrt{n} < p\leq n} \frac{\log p}{p-1}\right)$.

(1) Applying estimate (1) of Lemma \ref{lem:25} with  $x= n$ and with $x= \sqrt{n}$, subtracting the latter  
cancels the constant $C_1$ and yields
$$
\sum_{\sqrt{n}< p \le n} \frac{\log p}{p} = \log n- \frac{1}{2}\log  n + O \left( e^{-c\sqrt{1/2\log n}} \right).
$$
Combining this bound with Lemma \ref{lem:diff2} yields
$$
\sum_{\sqrt{n}< p \le n} \frac{\log p}{p-1} = \frac{1}{2} \log n + O \left( e^{-\frac{c}{2} \sqrt{\log n}} \right).
$$
Multiplying by $n(n-1)$, we obtain  the bound \eqref{eqn:Uncond-B1-asymp}.

(2) Assuming  the Riemann hypothesis, we  proceed the same way as above, using the Riemann hypothesis estimate (2) of Lemma \ref{lem:25} in place of (1).
\end{proof}

\subsection{Estimates for $B_{12}(n)$}\label{subsec:B12asympt} 

We estimate $B_{12}(n)$   by rewriting  it  in terms of Chebyshev summatory functions, and using known estimates.

%
%
\begin{thm} \label{thm:b2}
Let 
$$B_{12}(n) :=  (n-1)\sum_{j=1}^{\lfloor\sqrt{n}\rfloor }j\Bigg[\sideset{}{'}\sum_{\frac{n}{j+1}<p\leq \frac{n}{j}}  \log p\Bigg].$$
 Then: 
 
 (1) There is an absolute  constant  $c>0$ such that for all $n \ge 4$, 
 \begin{equation}
  \label{eqn:uncond-B2-asymp}
 B_{12}(n) =\frac{1}{2}n^{2}\log n+(\gamma-1)n^{2}+O\left(n^{2} \,\log n\,e^{-c/2\, \sqrt{\log n}}\right)
\end{equation}

 (2) Assuming the Riemann hypothesis we have, for  all $n \ge 4$, 
 \begin{equation}
 \label{eqn:RH-B2-asymp}
 B_{12}(n) = \frac{1}{2}n^{2}\log (n)+ (\gamma-1) n^2 + O\left( n^{7/4} (\log n)^2 \right). 
 \end{equation} 
\end{thm}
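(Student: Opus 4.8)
The plan is to rewrite the double sum $B_{12}(n)$ in terms of the Chebyshev function $\vartheta$ and then apply Abel summation together with the estimates of Lemma \ref{lem:26}. First I would observe that the inner sum $\sideset{}{'}\sum_{\frac{n}{j+1}<p\le \frac{n}{j}}\log p$ differs from $\vartheta(n/j)-\vartheta(n/(j+1))$ only by at most one term of size $O(\log n)$ (the prime restriction $p>\sqrt n$ only matters in the last interval $j=\lfloor\sqrt n\rfloor$), contributing an error $O(n\sqrt n\log n)$ after multiplying by $(n-1)$ and by $j\le\sqrt n$; this is absorbed in both claimed remainder terms. So up to acceptable error $B_{12}(n)=(n-1)\sum_{j=1}^{\lfloor\sqrt n\rfloor} j\left(\vartheta(n/j)-\vartheta(n/(j+1))\right)$. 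Next I would perform summation by parts (Abel summation) on the index $j$: writing $T_j=\vartheta(n/j)$, one has $\sum_{j=1}^{J} j(T_j-T_{j+1}) = \sum_{j=1}^{J} T_j - J\,T_{J+1}$ with $J=\lfloor\sqrt n\rfloor$. Since $n/(J+1)\le\sqrt n$, the boundary term $J\,T_{J+1}=O(\sqrt n\cdot\sqrt n)=O(n)$, hence after multiplying by $(n-1)$ it is $O(n^2)$ — wait, that is too large, so I must be more careful: in fact $T_{J+1}=\vartheta(n/(J+1))\le\vartheta(\sqrt n + 1)=O(\sqrt n)$ by Chebyshev, so $J\,T_{J+1}=O(n)$ and $(n-1)J\,T_{J+1}=O(n^2)$ is not negligible at the level of the main term, but it contributes only to the $O(n^2)$-type secondary term; I would need to extract its main contribution too. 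The cleaner route is to keep $\sum_{j=1}^{J}T_j = \sum_{j=1}^{J}\vartheta(n/j)$ as the main object.

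The key step is therefore to estimate $(n-1)\sum_{j=1}^{\lfloor\sqrt n\rfloor}\vartheta(n/j)$ and to handle the boundary term together with it. For each $j$ I would substitute Lemma \ref{lem:26}(1), namely $\vartheta(n/j)=n/j + O\big((n/j)\exp(-c\sqrt{\log(n/j)})\big)$. Summing the main terms gives $n\sum_{j=1}^{\lfloor\sqrt n\rfloor}\frac1j = n\big(\log\sqrt n + \gamma + O(1/\sqrt n)\big) = \tfrac12 n\log n + \gamma n + O(\sqrt n)$, using the standard harmonic sum asymptotic $H_m=\log m+\gamma+O(1/m)$. Multiplying by $(n-1)$ yields $\tfrac12 n^2\log n + \gamma n^2$ plus lower-order terms. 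The boundary correction $-(n-1)\lfloor\sqrt n\rfloor\,\vartheta(n/(\lfloor\sqrt n\rfloor+1))$, again by Lemma \ref{lem:26}(1), equals $-(n-1)\lfloor\sqrt n\rfloor\big(n/(\lfloor\sqrt n\rfloor+1)+\text{small}\big) = -n^2 + O(n^{3/2})$, which combines with $\gamma n^2$ to produce the stated coefficient $(\gamma-1)n^2$. For the error terms: in the range $j\le\sqrt n$ one has $\log(n/j)\ge\tfrac12\log n$, so $\exp(-c\sqrt{\log(n/j)})\le\exp(-\tfrac{c}{\sqrt2}\sqrt{\log n})$; summing $(n/j)$ against this weight over $j\le\sqrt n$ gives $O(n\log n)$ times the exponential, and multiplying by $(n-1)$ produces the claimed $O\big(n^2\log n\,e^{-(c/2)\sqrt{\log n}}\big)$ after adjusting the constant $c$.

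For part (2) the argument is identical, replacing Lemma \ref{lem:26}(1) by the Riemann-hypothesis estimate Lemma \ref{lem:26}(2): $\vartheta(n/j)=n/j+O\big(\sqrt{n/j}\,(\log(n/j))^2\big)$. Summing the error over $1\le j\le\lfloor\sqrt n\rfloor$ gives $O\big((\log n)^2\sqrt n\sum_{j\le\sqrt n} 1/\sqrt j\big)=O\big((\log n)^2\sqrt n\cdot n^{1/4}\big)=O\big(n^{3/4}(\log n)^2\big)$, and multiplying by $(n-1)$ yields $O\big(n^{7/4}(\log n)^2\big)$, as claimed; the boundary term and the prime-restriction term are both of smaller order. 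I expect the main obstacle to be bookkeeping rather than depth: one must carefully track the interplay between the Abel-summation boundary term and the harmonic sum so that the $-n^2$ and $+\gamma n^2$ contributions assemble correctly into $(\gamma-1)n^2$, and one must verify that no error term from the tail $j$ near $\sqrt n$ — where $\vartheta(n/j)$ is evaluated at arguments as small as $\sqrt n$ and the individual relative error is largest — spoils the stated bounds. A minor subtlety is also the legitimacy of applying the $\vartheta$-asymptotics uniformly down to argument $\sqrt n\ge 2$, which is fine for $n\ge 4$ as assumed.
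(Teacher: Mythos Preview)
Your proposal is correct and follows essentially the same route as the paper: rewrite the inner sums as $\vartheta(n/j)-\vartheta(n/(j+1))$ up to a negligible error from the prime restriction, Abel-sum in $j$ to reduce to $\sum_{j\le\lfloor\sqrt n\rfloor}\vartheta(n/j)$ minus a boundary term, then insert Lemma~\ref{lem:26} and the harmonic-sum asymptotic $H_{\lfloor\sqrt n\rfloor}=\tfrac12\log n+\gamma+O(n^{-1/2})$ to obtain $\tfrac12 n^2\log n+(\gamma-1)n^2$ with the stated error terms. Your bookkeeping of the boundary term (extracting $-n^2$ from $-(n-1)\lfloor\sqrt n\rfloor\,\vartheta(n/(\lfloor\sqrt n\rfloor+1))$) is in fact a bit more explicit than the paper's, which simply writes the boundary as $(\sqrt n-1)\vartheta(\sqrt n)$; the two differ only by $O(n^{3/2})$.
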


\begin{proof} 

(1) We have
\begin{align*}
\frac{1}{n-1} B_{12}(n) 
&= \sum_{j=1}^{\lfloor \sqrt{n}\rfloor} j\Bigg[\sideset{}{'}\sum_{\frac{n}{j+1}<p\leq \frac{n}{j}}  \log p\Bigg] \\
&=\sum_{j=1}^{\lfloor \sqrt{n}\rfloor}j\left(\vartheta\left(\frac{n}{j}\right)-\vartheta\left(\frac{n}{j+1}\right)\right)+ O \left( \sqrt{n}\log n\right) \\
&=\left(\sum_{j=1}^{\lfloor \sqrt{n} \rfloor} \vartheta\left(\frac{n}{j}\right)\right)-(\sqrt{n}-1)\vartheta(\sqrt{n})+ O \left( \sqrt{n}\log n\right) 
\end{align*}
\noindent where $\vartheta(m)=\sum_{p \leq m} \log p$ is the first Chebyshev summatory function,
and the error estimate comes from not counting primes $p \le \sqrt{n}$ inside the term with $j \le  \sqrt{n}< j+1$. \\

Using Lemma \ref{lem:26},  for $j \le \sqrt{n}$ we have
$$
\vartheta \left(\frac{n}{j}\right) = \frac{n}{j} + O\left( \frac{n}{j} e^{-c \sqrt{ \log \frac{n}{j}} }\right) = \frac{n}{j} +  O\left(\frac{n}{j} e^{-\frac{c}{2} \sqrt{\log n} } \right).
$$
In consequence
\begin{align*}
\sum_{j=1}^{\lfloor \sqrt{n} \rfloor} \vartheta\left(\frac{n}{j} \right)
&=\sum_{j=1}^{\lfloor \sqrt{n} \rfloor}\left( \frac{n}{j} + O \left(  \frac{n}{j} e^{- \frac{c}{2} \sqrt{\log n}}  \right) \right)
- (\sqrt{n}-1)\left( \sqrt{n} + O\left( \sqrt{n}e^{- \frac{c}{2} \sqrt{\log n}} \right)\right) \\
&=n \left(\frac{1}{2}  n \log n + \gamma + O\left(\frac{1}{n}\right)\right) +  O \left( n \log n e^{- \frac{c}{2} \sqrt{\log n}} \right). 
\end{align*} 
In addition we have
$$(\sqrt{n}-1)\vartheta(\sqrt{n})= n + O \left(n \log n \,e^{- \frac{c}{2} \sqrt{\log n}}\right).$$
Substituting these formulas in the formula for $\frac{1}{n-1} B_{12}(n)$ above and multiplying by $n-1$ yields
\begin{align*} 
B_{12}(n) 
&= (n-1) \left( \frac{1}{2}  n \log n + \gamma n+  O \left(n \log n \,e^{- \frac{c}{2} \sqrt{\log n} } \right)\right) \\
& \quad\quad - (n-1) \left(n + O \left(n \log n e^{- \frac{c}{2} \sqrt{\log n}}\right)   \right)+O \left( n^{3/2} \log n \right) \\
&= \frac{1}{2} n^2 \log n + (\gamma - 1) n^2 + O\left( n^2 \log n \,e^{- \frac{c}{2} \sqrt{\log n}}\right),
\end{align*} 
as asserted.

(2) Now assume the Riemann hypothesis. Then 
\begin{align*}
\sum_{j=1}^{\lfloor \sqrt{n} \rfloor} \vartheta \left( \frac{n}{j} \right) 
&=\Bigg[ \sum_{j=1}^{\lfloor \sqrt{n} \rfloor} \frac{n}{j} + O \left( \sqrt{\frac{n}{j}} \left(\log \frac{n}{j}\right)^2 \right) \Bigg] \\
& = n \left( \frac{1}{2} \log n + \gamma + O\left(\frac{1}{n} \right)\right)  + O \left( n^{3/4} (\log n)^2 \right) 
\end{align*}

We also have  
$$(\sqrt{n}-1)\theta(\sqrt{n})=n+O\left(n^{3/4} (\log n)^2\right).$$
Consequently
\begin{align*}
B_{12}(n) 
&= (n-1) \left( \frac{1}{2}  n \log n + \gamma \, n +  O \left( n^{3/4}(\log n)^2 \right) \right)- (n-1) \left(n + O\left(n^{3/4} (\log n)^2 \right) \right) \\
&= \frac{1}{2}n^2 \log n + (\gamma-1) n^2 + O\left(n^{7/4} (\log n)^2 \right).
\end{align*}
\end{proof}

 %
%

 \subsection{Asymptotic estimate for $A(n)$ and $B(n)$}

\begin{proof}[Proof of Theorem \ref{thm:ABn}]
We estimate $B(n)$ and start with 
$$
B(n)= B_{11}(n)- B_{12}(n)+ B_R(n).
$$
By Lemma \ref{lem:diff1} we have $B_R(n) = O(n^{3/2})$,
which is negligible compared to the remainder terms in the theorem statements.\medskip

(1) Unconditionally, using  
Theorems \ref{thm:b1}(1) and Theorem \ref{thm:b2}(1), we obtain
\begin{align*}
B(n)&=B_{11}(n)-B_{12}(n)+ B_R(n)\\
&=\left(\frac{1}{2}n^{2}\log (n)+O\left(n^{2}e^{-\frac{c}{2}\sqrt{\log n}}\right)\right)-\left(\frac{1}{2}n^{2}\log n+(\gamma-1)n^{2}+O\left(n^{2}\log (n) e^{-\frac{c}{\sqrt{2}} \, \sqrt{\log n}}\right)\right)\\
&=(1-\gamma)n^{2}+O\left(n^{2}e^{-\frac{c}{2}\sqrt{\log n}}\right).
\end{align*}

(2) Assuming  the Riemann hypothesis, using
  Theorems \ref{thm:b1}(2) and Theorem \ref{thm:b2}(2), 
\begin{align*}
B(n)&= B_{11}(n)-B_{12}(n)+ B_R(n) \\
&=\left(\frac{1}{2}n^{2}\log (n)+O\left(n^{7/4} (\log n)^2\right)\right)  -\left(\frac{1}{2}n^{2}\log n+(\gamma-1)n^{2}+O\left( n^{7/4}(\log n)^2\right)\right)\\
&=(1-\gamma)n^{2}+O\left(n^{7/4}(\log n)^2\right),
\end{align*}
as required.

 The estimates for $A(n)$ follow directly from those of $B(n)$, using the linear relation
 $A(n) = \log \G_n + B(n)$. 
Combining this relation  with the asymptotic estimate \eqref{eqn:logG-asymp} yields 
$$
A(n) = \frac{1}{2} n^2  + B(n) + O \left(n \log n\right).
$$
The estimates (1) and (2) for $A(n)$ then follow on substituting the formulas (1), (2) for $B(n)$.
\end{proof} 
%
%

  \section{Asymptotic estimates  for the 
   sums  $B(n,x)$ and $A(n,x)$}\label{sec:asymp-Anx}

 %
%
\subsection{Estimates for $B(n,x)$}\label{subsec:Bnx}

We derive estimates for $B(n,x)$ in the interval $1 \le x  \le n$ 
starting from the asymptotic estimates for $B(n)= B(n,n)$.
Let  $H_m= \sum_{k=1}^m \frac{1}{k}$ denote the $m$-th harmonic number.

\begin{thm}\label{thm:Bnx}
Let $B(n,x) = \sum_{p\leq x}\frac{n-1}{p-1} d_p(n) \log p.$ 
We set
\begin{equation}\label{eqn:Bnx-bound}
B(n,x) =  B_0(n,x) + R_{B}(n,x),
\end{equation}
having   main term $B_0(n,x)= f_B(\frac{x}{n})n^2$ with
\begin{equation} \label{eqn:A0nx-def1}
f_{B}(\frac{x}{n}) := (1-\gamma)+ \left(  H_{\left\lfloor \frac{n}{x}\right\rfloor} -  \log \frac{n}{x} \right)
-  \left( \left\lfloor \frac{n}{x} \right\rfloor \frac{x}{n}\right),
\end{equation}
and  having $R_B(n,x)$ as  remainder term. Then: 

(1) Unconditionally for  all $n\ge 4$ and all 
$1 \le x  \le n$, the remainder term satisfies
\begin{equation}\label{eqn:BUcond}
R_{B}(n,x) =  O \left(n^2 \left(\frac{n}{x} \right) e^{- \frac{c}{2} \sqrt{ \log n}} \right),
\end{equation}
where the $O$-constant is absolute. 

(2) Assuming the Riemann hypothesis,   for $n^{3/4}\le x \le n$
the remainder term satisfies 
$$
R_{B}(n, x) = O \left( n^{7/4}  (\log n)^2 \right).
$$
\end{thm}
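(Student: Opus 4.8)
The plan is to reduce $B(n,x)$ for general $1 \le x \le n$ to the already-established estimate for $B(n) = B(n,n)$ from Theorem \ref{thm:ABn}, by re-running the dyadic decomposition of Section \ref{subsec:22} but truncating the range of primes at $x$ instead of at $n$. First I would dispose of the small primes: exactly as in Lemma \ref{lem:21}, the contribution $B_R(n,x)$ of primes $p \le \sqrt{n}$ is $O(n^{3/2})$, which is absorbed into both claimed remainder terms, so it suffices to treat $\sqrt{n} < p \le x$. (If $x \le \sqrt{n}$ the whole sum is $O(n^{3/2})$ and the main term $f_B(x/n)n^2$ should be checked to be of the same order — indeed for $x \le \sqrt n$ we have $\lfloor n/x\rfloor \ge \sqrt n$, and the harmonic-minus-log term together with the remaining pieces is $O(\sqrt n \log n)$, consistent with $B(n,x)=O(n^{3/2})$.)

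In the main range, Lemma \ref{lem:diff1} applies verbatim: for $\sqrt{n} < p \le n$ the base-$p$ expansion of $n$ has two digits, so $\frac{n-1}{p-1}d_p(n)\log p = (n-1)\big(\frac{n\log p}{p-1} - j\log p\big)$ for $p \in I_j = (\frac{n}{j+1},\frac{n}{j}]$. Writing $\alpha = x/n$ and $m = \lfloor 1/\alpha\rfloor = \lfloor n/x\rfloor$, the primes $\sqrt n < p \le x$ are covered by the full intervals $I_{m+1}, I_{m+2}, \dots$ together with the partial interval $(\frac{n}{m+1}, x]$ sitting inside $I_m$. So I would split $B(n,x) = B_{11}(n,x) - B_{12}(n,x) + O(n^{3/2})$ with
\begin{equation*}
B_{11}(n,x) = n(n-1)\sum_{\sqrt n < p \le x}\frac{\log p}{p-1}, \qquad
B_{12}(n,x) = (n-1)\sum_{j \ge m}\, j\Big[\sideset{}{'}\sum_{\frac{n}{j+1}<p\le \min(\frac nj, x)}\log p\Big].
\end{equation*}
For $B_{11}(n,x)$: by Lemma \ref{lem:diff2} (valid for $p>\sqrt n$) one replaces $\frac{\log p}{p-1}$ by $\frac{\log p}{p}$ at cost $O(1/\sqrt n)$, and then Lemma \ref{lem:25} applied at $x$ and at $\sqrt n$ gives $\sum_{\sqrt n<p\le x}\frac{\log p}{p} = \log x - \tfrac12\log n + O(e^{-c\sqrt{\log n}/2})$ unconditionally (and the $x^{-1/2}(\log x)^2$-type error under RH, dominated by $n^{-3/8}(\log n)^2$ when $x \ge n^{3/4}$); multiplying by $n(n-1)$ gives a main term $n^2(\log x - \tfrac12\log n) = n^2\log n(\tfrac12 + \log\alpha/\log n)$ — note this is $\tfrac12 n^2\log n + n^2\log\alpha$, so the $\log\alpha$ term is a genuine $n^2$-size contribution, consistent with the $\log\frac1\alpha = -\log\alpha$ inside $f_B$.

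For $B_{12}(n,x)$: rewrite each inner sum as a difference of Chebyshev $\vartheta$-values, $\vartheta(\min(\tfrac nj,x)) - \vartheta(\tfrac n{j+1})$, picking up the $O(\sqrt n\log n)$ error from uncounted primes $p\le\sqrt n$ in the last (three-digit) interval as before. Using Lemma \ref{lem:26} to replace $\vartheta(\tfrac nj)$ by $\tfrac nj$ and $\vartheta(x)$ by $x$ with the stated error, $\frac{1}{n-1}B_{12}(n,x)$ becomes a telescoped/Abel-summed expression $\sum_{j\ge m}\vartheta(\tfrac nj) - (\sqrt n - 1)\vartheta(\sqrt n) - m\big(\tfrac nm - x\big) + (\text{errors})$, where the last term accounts for the truncation of $I_m$ at $x$. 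The key elementary computation is then
\begin{equation*}
\sum_{m \le j \le \sqrt n}\frac nj = n\big(\log\sqrt n - \log m + \gamma - H_{m-1} + O(\tfrac1n) + O(\tfrac1m)\big) = \tfrac12 n\log n + n\big(\gamma - H_m + \log\tfrac1m\big) + \cdots,
\end{equation*}
using $H_m - \log m = \gamma + O(1/m)$ and being careful that the $\gamma - H_m$ combination is what survives after subtracting the $j < m$ tail of the harmonic series. Assembling: $\frac1{n-1}B_{12}(n,x) = \tfrac12 n\log n + n(\gamma - H_m - \log m) - n + (mx/n\cdot n - n\cdot\ldots)$; combined with the truncation term $-m(\tfrac nm - x)n/\ldots = -(n - \tfrac{m x}{n}\cdot n)\cdot\ldots$, after multiplying by $n-1$ and subtracting from $B_{11}(n,x)$ the $\tfrac12 n^2\log n$ terms cancel, leaving exactly $B_0(n,x) = n^2\big[(1-\gamma) + (H_m - \log\tfrac nx) - m\tfrac xn\big]$. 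The errors combine to $O(n^2(n/x)e^{-c\sqrt{\log n}/2})$: the factor $n/x$ enters because the error $\sum_{j\ge m}\tfrac nj e^{-c\sqrt{\log n}/2}$ has $\sum_{j\ge m}1/j$ up to $\sqrt n$ which is $O(\log n)$, but more precisely the per-interval errors and the substitution error in $B_{11}$ are multiplied by $n^2$ and the worst case occurs at small $x$; tracking the $x$-dependence (the inner sum over $j$ runs from $m = \lfloor n/x\rfloor$) yields the stated $\frac1\alpha = n/x$ factor. Under RH the same bookkeeping with the $\sqrt{n/j}(\log(n/j))^2$ errors and the restriction $x \ge n^{3/4}$ gives $O(n^{7/4}(\log n)^2)$.

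\textbf{Main obstacle.} The delicate point is the exact identification of the constant in the main term — specifically showing that the pieces $-\tfrac12\log n$ from $B_{11}$, the $\gamma$ and $\tfrac12\log n$ from $\sum_{m\le j\le\sqrt n}\tfrac nj$, and the truncation term $-m x/n$ combine to precisely $(1-\gamma) + H_m - \log(n/x) - m x/n$ with no leftover $\log n$, no leftover constant, and the harmonic number appearing with the correct index $m = \lfloor n/x\rfloor$ rather than $m\pm 1$. This requires handling the boundary interval $I_m$ (partially truncated at $x$) separately from the full intervals, and carefully matching $H_m - \log m \to \gamma$ against the $\gamma - H_{m-1}$ that naturally appears from $\sum_{j=m}^{\infty}(\cdot) = \sum_{j=1}^{\infty}(\cdot) - \sum_{j=1}^{m-1}(\cdot)$; this is routine but error-prone. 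The second, lesser obstacle is making every $O$-constant genuinely independent of $\alpha$ (equivalently of $x$) while the number of terms $\lfloor\sqrt n\rfloor - m + 1$ and the innermost truncation both depend on $x$ — this is what forces the explicit $n/x$ factor in \eqref{eqn:BUcond}.
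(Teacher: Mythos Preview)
Your approach is correct and would prove the theorem, but it differs from the paper's in one structural choice. You estimate $B(n,x)$ \emph{directly} by summing over $\sqrt n<p\le x$, splitting into $B_{11}(n,x)-B_{12}(n,x)$ with $j$ running over the range $m=\lfloor n/x\rfloor \le j < \sqrt n$. The paper instead writes
\[
B(n,x)=B(n)-B^{c}(n,x),\qquad B^{c}(n,x)=\sum_{x<p\le n}\frac{n-1}{p-1}d_p(n)\log p,
\]
and estimates only the complement $B^{c}(n,x)=B_{11}^{c}(n,x)-B_{12}^{c}(n,x)$, where now $j$ runs from $1$ to $m=\lfloor n/x\rfloor$. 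This buys two things. First, the constant $(1-\gamma)$ is imported wholesale from Theorem~\ref{thm:ABn} for $B(n)$, so the ``delicate point'' you flag --- reconciling $H_m-\log m$ with $\gamma$ and checking no stray constant or $\log n$ survives --- simply does not arise; in the complement one gets cleanly $B_{12}^{c}(n,x)=n(n-1)\big(H_{\lfloor n/x\rfloor}-\lfloor n/x\rfloor\tfrac{x}{n}\big)+(\text{error})$ and $B_{11}^{c}(n,x)=n(n-1)\log\tfrac{n}{x}+O(n^{3/2}\log n)$. Second, the error bookkeeping is more transparent: there are only $\lfloor n/x\rfloor$ intervals $I_j$ in the complement, each contributing $O(n\,e^{-c/2\,\sqrt{\log n}})$ after applying Lemma~\ref{lem:26}, which yields the factor $\lfloor n/x\rfloor$ in \eqref{eqn:BUcond} immediately --- whereas in your direct sum over $m\le j<\sqrt n$ the error is naturally $O(n^2\log n\,e^{-c/2\sqrt{\log n}})$, which is actually \emph{stronger} than the stated bound but does not produce the $n/x$ factor in the way you describe. (Your opening sentence says you will ``reduce $B(n,x)$ \dots\ to the already-established estimate for $B(n)$'', but the plan that follows never invokes $B(n)$; the paper's proof is the one that actually does this reduction.)
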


\begin{rem} \label{rem:32}
 It is immediate that the unconditional estimate (1)  is trivial
  whenever $1 \le x \le n \exp( -  c/2 \sqrt{ \log n})$,
 since the remainder term  will then have order of magnitude at least $n^2$,
 and the O-constant can be adjusted.
 The formula \eqref{eqn:Bnx-bound}  implies  a nontrivial estimate  for $x \ge \frac{n}{(\log n)^A}$ for any fixed positive $A$. 
\end{rem}

\begin{proof}
We  write
\begin{equation}
B(n, x) = B(n) - B^c(n,x),
\end{equation} 
where the complement function 
\begin{equation}\label{eqn:Bcnx}
B^c(n,x) := 
\sum_{x < p \le n} \frac{n-1}{p-1} d_p(n) \log p.
\end{equation}
 
 The analysis in Section \ref{sec:B11asympt} applies to estimate $B^c(n,x)$.
 We may assume that $x \ge \sqrt{n}$ since \eqref{eqn:BUcond} holds trivially for smaller $x$.
 For $x \ge \sqrt{n}$   Lemma \ref{lem:diff1} gives  the decomposition
$$
B^c(n,x) = B_{11}^c(n,x) - B_{12}^c(n,x)
$$
where
\begin{equation}\label{eqn:B11c} 
B_{11}^c(n,x) = n (n-1) \sum_{x < p \le n} \frac{\log p}{p-1}
\end{equation}
and
\begin{equation}\label{eqn:B12c}
B_{12}^c (n,x) = (n-1) \sum_{j=1}^{\lfloor \frac{n}{x}-1 \rfloor} j \left( \sum_{\frac{n}{j+1} < p \le \frac{n}{j}} \log p\right) 
 + (n-1) \left\lfloor \frac{n}{x} \right\rfloor \left( \sum_{x < p  \le  \frac{n}{\lfloor n/x \rfloor}} \log p \right).
\end{equation}

To estimate $B_{11}^c(n, x)$ we suppose   $x > \sqrt{n}$ and apply Lemma \ref{lem:diff2} to obtain
$$
\sum_{x < p \le n} \frac{\log p}{p-1} = \sum_{x < p \le n} \frac{\log p}{p} + O\left( \frac{\log n}{\sqrt{n}}\right).
$$
Next Lemma \ref{lem:25}(1) gives 
$$
\sum_{x < p \le n} \frac{\log p}{p} = \log \frac{n}{x} + O\left( \frac{ \log n}{\sqrt{n}}\right) .
$$
Substituting these two estimates in \eqref{eqn:B11c} yields,  for $x \ge \sqrt{n}$, unconditionally,  
\begin{equation}\label{eqn:B11cnx} 
B_{11}^c(n,x) = n(n-1) \log \frac{n}{x} + O\left( n^{3/2} \log n \right).
\end{equation}

To estimate $B_{12}^c (n, x)$ for  $x  \ge \sqrt{n}$, call the two sums on the right side of \eqref{eqn:B12c} $(n-1) B_{21}^c(n,x) $ 
and $(n-1)B_{22}^c(n,x)$, respectively. Then
\begin{eqnarray*} 
B_{21}^c(n,x)  &: =& \sum_{j=1}^{\lfloor \frac{n}{x}-1 \rfloor} j \left( \sum_{\frac{n}{j+1} < p \le \frac{n}{j}} \log p\right) 
= \sum_{j=1}^{ \left\lfloor \frac{n}{x} -1\right\rfloor} j \left(\vartheta \left(\frac{n}{j}\right) - \vartheta \left(\frac{n}{j+1}\right)\right)  \\
&=& \sum_{j=1}^{ \left\lfloor \frac{n}{x} -1\right\rfloor} j \left( \frac{n}{j} - \frac{n}{j+1} \right) +  O \left(\sum_{j=1}^{ \left\lfloor \frac{n}{x}  \right\rfloor } j \left( \frac{n}{j}\right) \exp( - c \sqrt{ \log (n/j)} ) \right)\\
&=& \sum_{j=1}^{\lfloor \frac{n}{x} -1\rfloor} \frac{n}{j+1} +  O \left(\left\lfloor \frac{n}{x} \right\rfloor n \exp( -c/2 \,  \log x)  \right)\\
&=& n \left( H_{ \left\lfloor \frac{n}{x} \right\rfloor} -1\right) + O \left(  \left\lfloor \frac{n}{x} \right\rfloor n \exp( - c/2\, \log x) \right),
\end{eqnarray*}
with  the prime number theorem with error term in Lemma \ref{lem:26}(1)  applied in the second line.
In addition
\begin{eqnarray*}
B_{22}^c (n,x) &:=& \left\lfloor \frac{n}{x} \right\rfloor \left( \sum_{x < p  \le  \frac{n}{\lfloor n/x \rfloor}} \log p \right)
 =   \left\lfloor \frac{n}{x} \right\rfloor \left(\vartheta \left( \frac{n}{\left\lfloor n/x \right\rfloor} \right) - \vartheta(x) \right) \\
&= & \left\lfloor \frac{n}{x} \right\rfloor \left(  \frac{n}{\left\lfloor n/x \right\rfloor}-x \right) + O \left(n \left\lfloor \frac{n}{x} \right\rfloor \exp( - c/2 \,  \sqrt{ \log x} )    \right) \\
&=&  n - \left\lfloor \frac{n}{x} \right\rfloor x + O \left(n \left\lfloor \frac{n}{x} \right\rfloor \exp( - c/2 \,  \sqrt{ \log n} )\right) ,
\end{eqnarray*}
also applying Lemma \ref{lem:26}(1)  in the second line.
Substituting  the bounds for $B_{11}^c(n,x)$ and $B_{22}^c(n,x)$ into \eqref{eqn:B11cnx} yields 
\begin{equation} 
B_{12}^c(n, x) = n(n-1) \left( H_{\left\lfloor \frac{n}{x}\right\rfloor} -  \left\lfloor \frac{n}{x} \right\rfloor \frac{x}{n} \right) + O \left( n^2 \left\lfloor \frac{n}{x} \right\rfloor \exp( - c/2 \,  \sqrt{ \log n} ) \right).
\end{equation}
We obtain, using  Theorem \ref{thm:ABn} (1)   to estimate $B(n)$, 
\begin{eqnarray}\label{eqn:Bnx-unconditional}
B(n, x) &= & 
 B(n)  -B_{11}^c(n, x) + B_{12}^c(n,x) \nonumber  \\
&=& (1-\gamma)n^2 + n(n-1) \left(-  \log \frac{n}{x} + H_{\left\lfloor \frac{n}{x} \right\rfloor} -  \left\lfloor \frac{n}{x} \right\rfloor \frac{x}{n} \right) \nonumber\\
&& \quad \quad\quad +O \left( n^2 \left\lfloor \frac{n}{x} \right\rfloor \exp( - c/2 \,  \sqrt{ \log n} )\right),\nonumber \\
&=& (1-\gamma)n^2 + n^2 \left( H_{\left\lfloor \frac{n}{x} \right\rfloor} -  \log \frac{n}{x}  \right)-  n^2\left( \left\lfloor \frac{n}{x} \right\rfloor \frac{x}{n} \right)\nonumber  \\
&& \quad \quad\quad +O \left( n^2 \left\lfloor \frac{n}{x} \right\rfloor \exp( - c/2 \,  \sqrt{ \log n} )\right), 
\end{eqnarray} 
which   is \eqref{eqn:Bnx-bound}.

(2) We follow the same sequence of estimates as in (1). 
In estimating both $S_1(n,x) $ and $S_2(n,x)$ we  apply the Riemann hypothesis bound in Lemma \ref{lem:26}(2) to improve
their  remainder terms from $O \left( n^{2} \left\lfloor \frac{n}{x} \right\rfloor \exp( - \frac{c}{2} \sqrt{ \log n})  \right)$
to $O\left( n^{3/2}  \left\lfloor \frac{n}{x} \right\rfloor (\log n)^2)\right)$. Imposing  the bound $ x \ge n^{3/4}$ yields the remainder term $O\left( n^{7/4} (\log n)^2\right)$. 
In the final sum \eqref{eqn:Bnx-unconditional} Theorem \ref{thm:ABn}(2) estimates $B(n)$ under the Riemann hypothesis to yield
 an additional  remainder term
$O \left( n^{7/4} (\log n)^2\right)$. 
\end{proof}

\begin{proof}[Proof of Theorem \ref{thm:Bnx-cor}] 
The theorem follows on choosing  $x= \alpha n$ in Theorem \ref{thm:Bnx} and simplifying. 
\end{proof}

\begin{rem}\label{rem:34}
The function $f_B(\alpha)$ defined by \eqref{eqn:Bnx-parametrized}  has  $f_B(1)=1-\gamma$, and has  $\lim_{\alpha \to 0} f_B(\alpha)=0$   since $ H_{\left\lfloor \frac{1}{\alpha} \right\rfloor}- \log \frac{1}{\alpha}  \to \gamma$
as $\alpha \to 0$. 
\end{rem}

%
%

  \subsection{Estimates  for  
   $A(n,x)$}\label{sec:32} 

We derive estimates for $A(n,x)$ starting from $A(x,x)$ and using a recursion involving $B(y, x)$
for $x \le y \le n$.

\begin{thm}\label{thm:Anx}
Let $A(n,x) = \sum_{p\leq x}\frac{2}{p-1} S_p(n) \log p.$  We write
\begin{equation}\label{eqn:Anx-bound}
A(n,x) =A_0(n,x)  + R_{A}(n,x), 
\end{equation}
having main term $A_{0}(n,x) = f_A(\frac{x}{n}) n^2$ with
\begin{equation}\label{eqn:A0nx-defn}
f_{A}(\frac{x}{n}) := \left(\frac{3}{2} -\gamma \right) +         \left(  H_{\lfloor \frac{n}{x} \rfloor} -  \log \frac{n}{x} \right) 
+ \frac{1}{2} (\frac{x}{n})^2 \left\lfloor \frac{n}{x}\right\rfloor^2 + \frac{1}{2} (\frac{x}{n})^2 \left\lfloor \frac{n}{x}\right\rfloor  - 2\frac{x}{n} \left\lfloor \frac{n}{x} \right\rfloor,
\end{equation} 
%
and having $R_{A}(n, x)$ as  remainder term. Then: 

(1) Unconditionally there  is a positive constant $c$ such that for  all $n \ge 4$ and $1 \le x \le n$, 
the remainder term satisfies
\begin{equation} \label{eqn:AUnd-rem} 
R_{A}(n,x)=   O \left(n^2 \left(\frac{n}{x} \right) e^{- \frac{c}{2} \sqrt{ \log n}} \right),
\end{equation}
where the $O$-constant is absolute.

(2) Assuming the Riemann hypothesis, for and $n \ge 4$ and $n^{3/4} \le x \le n$ the remainder term satisfies
$$
R_{A}(n,x) = O \left( n^{7/4}  \left(\frac{n}{x} \right) (\log n)^2 \right).
$$
\end{thm}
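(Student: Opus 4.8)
The plan is to obtain the estimate for $A(n,x)$ from a recursion expressing it through the value of $A$ at a smaller first argument together with the quantities $B(m,x)$ already estimated in Theorem~\ref{thm:Bnx}. Writing $N=\lceil x\rceil$ and starting from $S_p(n)=S_p(N)+\sum_{m=N}^{n-1}d_p(m)$, multiplying by $\tfrac{2\log p}{p-1}$, summing over $p\le x$ and using $\sum_{p\le x}\tfrac{(m-1)\log p}{p-1}d_p(m)=B(m,x)$, one gets
\[
A(n,x)=A(N,x)+2\sum_{m=N}^{n-1}\frac{B(m,x)}{m-1}.
\]
Here $A(N,x)$ agrees with $A(N,N)=A(N)$ up to at most one prime term (a prime $p\in(x,N\,]$, of size $\tfrac{2}{p-1}S_p(p)\log p=O(x\log x)$), so Theorem~\ref{thm:ABn} gives $A(N,x)=(\tfrac32-\gamma)x^2+O\big(x^2e^{-c\sqrt{\log x}}\big)$ unconditionally and $A(N,x)=(\tfrac32-\gamma)x^2+O\big(x^{7/4}(\log x)^2\big)$ on the Riemann hypothesis. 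We may also assume $x>\sqrt n$, since for $x\le\sqrt n$ the trivial bound $A(n,x)\le A^{\ast}(n,x)=\pi(x)n\log n=O(n^{3/2}\log n)$ already lies within the asserted remainder.

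Now substitute $B(m,x)=f_B(x/m)\,m^2+R_B(m,x)$ from Theorem~\ref{thm:Bnx} into the recursion. Since $\tfrac{m^2}{m-1}=m+1+\tfrac1{m-1}$ and $f_B$ is continuous (hence bounded) on $[0,1]$ (cf.\ Remark~\ref{rem:34}), the $f_B$-part of the sum contributes $2\sum_{m=N}^{n-1}m\,f_B(x/m)+O(n)$. The $R_B$-part is bounded, using Theorem~\ref{thm:Bnx}(1), by $\ll\tfrac1x\sum_{m\le n}m^2e^{-\frac c2\sqrt{\log m}}\ll n^2\tfrac nx\,e^{-c'\sqrt{\log n}}$, which after renaming the constant is the remainder \eqref{eqn:AUnd-rem}; on the Riemann hypothesis, and provided $x\ge n^{3/4}$ (so that $m\le n\le x^{4/3}$ for all summands, keeping each $B(m,x)$ in the range of Theorem~\ref{thm:Bnx}(2)), it is $\ll\sum_{m\le n}m^{3/4}(\log m)^2\ll n^{7/4}(\log n)^2$. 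Finally, $f_B$ being continuous with $|f_B'|<1$ on each interval $\big(\tfrac1{k+1},\tfrac1k\big)$, the map $t\mapsto t\,f_B(x/t)$ has bounded derivative on $[x,n]$, so comparing the sum with its integral yields $\sum_{m=N}^{n-1}m\,f_B(x/m)=\int_x^n t\,f_B(x/t)\,dt+O(n)$. Collecting the errors, the theorem reduces to the identity
\begin{equation}\label{eqn:fA-integral}
2\int_x^n t\,f_B(x/t)\,dt+\Big(\tfrac32-\gamma\Big)x^2=f_A\!\left(\tfrac xn\right)n^2 .
\end{equation}

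Identity \eqref{eqn:fA-integral} is proved by piecewise integration. On $t\in[kx,(k+1)x)$ one has $\lfloor t/x\rfloor=k$, so $f_B(x/t)=(1-\gamma)+H_k+\log(t/x)-kx/t$ and $t\,f_B(x/t)=(1-\gamma+H_k)t+t\log(t/x)-kx$, whose primitives are elementary. Summing the resulting integrals over $k=1,\dots,\lfloor n/x\rfloor$ with the last interval truncated at $t=n$, and using $\sum_{k=1}^{J}H_k=(J+1)H_J-J$ and the value of $\int t\log(t/x)\,dt$, the quadratic terms reassemble into $\tfrac12(x/n)^2\lfloor n/x\rfloor^2+\tfrac12(x/n)^2\lfloor n/x\rfloor-2(x/n)\lfloor n/x\rfloor$, the constant terms (after adding the $(\tfrac32-\gamma)x^2$ from the first paragraph) into $\tfrac32-\gamma$, and the logarithmic terms into $H_{\lfloor n/x\rfloor}-\log(n/x)$; this is exactly $f_A(x/n)n^2$ of \eqref{eqn:A0nx-defn}. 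Part~(2) follows by the same computation with the Riemann hypothesis remainder bounds inserted, the restriction $x\ge n^{3/4}$ ensuring that Theorem~\ref{thm:Bnx}(2) applies to every $B(m,x)$ occurring in the sum.

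The only substantive obstacle is the explicit evaluation \eqref{eqn:fA-integral}: a long but wholly elementary computation in which the floor functions, the harmonic sums, and the truncated final interval must be tracked with care, and every implied constant verified to be independent of $\alpha=x/n$. All the analytic input — prime-number-theorem remainder terms, and the continuity and boundedness of $f_B$ — has already been packaged in Theorems~\ref{thm:ABn} and~\ref{thm:Bnx}. As consistency checks, $\alpha=1$ gives $f_A(1)=\tfrac32-\gamma$, matching $A(n,n)=A(n)$, while a direct evaluation of the left side of \eqref{eqn:fA-integral} at $x=n/2$ reproduces $\big(\tfrac74-\gamma-\log 2\big)n^2=f_A(\tfrac12)n^2$.
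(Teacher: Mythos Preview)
Your proof is correct and follows essentially the same route as the paper's: both start from the recursion $A(n,x)=A(x,x)+\sum_{y=x+1}^{n-1}\tfrac{2}{y-1}B(y,x)$ and insert the estimate for $B(y,x)$ from Theorem~\ref{thm:Bnx}, the only difference being that the paper evaluates the resulting main-term sums directly (splitting into pieces $A_1$ and $A_2=A_{21}-A_{22}-A_{23}$) while you first pass to the integral $2\int_x^n t\,f_B(x/t)\,dt$ via a bounded-derivative sum--integral comparison. One small slip in your sketch: the piecewise formula should read $f_B(x/t)=(1-\gamma)+H_k-\log(t/x)-kx/t$ (minus, not plus, on the logarithm), though your final assembly and the consistency checks at $\alpha=1$ and $\alpha=\tfrac12$ are correct.
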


\begin{rem} \label{rem:36}
Although the range of $x$ in  (1) is given as $\sqrt{n} \le x \le n$, the 
 remainder term is larger than  the main term whenever $x \le n \exp( -  \frac{c}{2} \sqrt{ \log n})$.
 The formula \eqref{eqn:Anx-bound}  gives a nontrivial estimate  for $x \ge \frac{n}{(\log n)^A}$ for any fixed positive $A$.
\end{rem}


\begin{proof} 
(1) We start from  from the equality
$$
A(n,x) = \sum_{p \le x} \frac{2}{p-1} S_p(x) \log p  +   \sum_{y= x+1}^{n-1} \frac{2}{y-1} \left( \sum_{p \le x}  \frac{y-1}{p-1} d_p(y) \log p\right). 
$$
This formula  may be rewritten
\begin{equation}\label{eqn:AB-recursion}
A(n, x) = A(x, x) + \sum_{y= x+1}^{n-1}  \frac{2}{y-1} B(y, x). 
\end{equation} 
We apply the estimates of  Theorem \ref{thm:Anx-cor} (1) to $A(x,x)$, and those
of Theorem \ref{thm:Bnx} (1)   to $B(y, x)$, 
to obtain, for $x \ge \sqrt{n}$, 
\begin{eqnarray}\label{eqn:Anx-est-0}
A(n,x) & = & 
 \left(\frac{3}{2} - \gamma \right) x^2 
+ \sum_{y=x+1}^{n-1}  \frac{2}{y-1}\left( (1-\gamma)y^2+  y(y-1)\left(   H_{\left\lfloor \frac{y}{x} \right\rfloor} -\log \frac{y}{x}-
\left\lfloor \frac{y}{x}\right\rfloor \frac{x}{y}\right) \right)  \nonumber\\
&& \quad \quad  +O\left( x^2 \exp(- c \sqrt{ \log x} )\right)+ O \left(\sum_{y=x+1}^{n-1}  \frac{2}{y-1} \left(  y^2 \left\lfloor\frac{y}{x} \right\rfloor \exp ( - c/2 \sqrt{\log y})   \right) \right) \nonumber \\
&=&  
\left(\frac{3}{2} - \gamma \right) x^2 +  \left(  2(1-\gamma) \sum_{y=x+1}^{n-1} \frac{y^2}{y-1} \right) 
+ \left( \sum_{y=x+1}^{n-1}   2yH_{\lfloor \frac{y}{x} \rfloor} -2y  \log \frac{y}{x} -2x\left\lfloor \frac{y}{x}\right\rfloor \right) \nonumber\\
&&\quad\quad 
+O \left( n^2 \left\lfloor \frac{n}{x} \right\rfloor \exp ( - c/2 \sqrt{\log x})   \right).
\end{eqnarray}
We name the last two sums on the  right side of \eqref{eqn:Anx-est-0}, as
$$
A(n,x) = \left(\frac{3}{2} -\gamma \right)x^2 + A_1(n,x) + A_2(n,x)+O \left( n^2  \left\lfloor \frac{n}{x} \right\rfloor \exp ( - c/2 \, \sqrt{\log x})   \right).
$$
We assert
\begin{equation}\label{eqn:A1-est}
A_1(n,x) :=   2(1-\gamma) \sum_{y=x+1}^{n-1} \frac{y^2}{y-1}  = ( 1-  \gamma)(n^2- x^2) + O \left(n \right). 
\end{equation} 
This estimate follows from 
$$ \sum_{y=x+1}^{n-1} \frac{y^2}{y-1} = \sum_{y=x+1}^{n-1}( y+ O(1) )
 = \frac{1}{2} n(n-1) - \frac{1}{2} x(x+1) +  O(n)
 = \frac{1}{2}(n^2 - x^2) +O\left( n \right). 
$$
It remains to  estimate the sum
$$
A_2 (n, x) :=   \sum_{y=x+1}^{n-1} 2y H_{\left\lfloor \frac{y}{x} \right\rfloor} -  \sum_{y=x+1}^{n-1} 2y\left(\log \frac{y}{x}\right)- \sum_{y=x+1}^{n-1}  2x \left\lfloor \frac{y}{x} \right\rfloor    
= A_{21}(n,x)  - A_{22}(n,x)  - A_{23}(n,x).
$$
We assert that, for $n^{1/2} \le x \le n$,  
 \begin{equation}\label{eqn:A21-est} 
A_{21}(n,x)  =n^2 H_{\left\lfloor \frac{n}{x} \right\rfloor} - x^2 \left\lfloor \frac{n}{x} \right\rfloor^2 + x^2\left\lfloor \frac{n}{x} \right\rfloor  + O\left(  n \log n \right). 
\end{equation}
To show this, we evaluate the three  sums. We set   $n = j_0  x + \ell$  
$j_0 =\left\lfloor \frac{n}{x} \right\rfloor$ and $0 \le \ell < x$ , where
 $\ell = n- x \left\lfloor \frac{n}{x} \right\rfloor   =  x\{ \frac{n}{x} \} $.
 We have 
\begin{eqnarray*}
A_{21}(n,x) &=&  \sum_{y=x+1}^{n-1} 2y H_{\left\lfloor \frac{y}{x} \right\rfloor} = \sum_{j=1}^{j_0} \frac{1}{j} \left( \sum_{y=jx}^{n-1} 2y \right) -2x. \nonumber \\
&= & \sum_{j=1}^{j_0} \frac{2}{j} \left( {{n}\choose{2}}- {{jx+1}\choose{2}}\right) -2x \nonumber \\
&=&  n(n-1) H_{\left\lfloor \frac{n}{x} \right\rfloor} - \sum_{j=1}^{j_0} x(jx+1)   -2x \nonumber \\
&=& n(n-1) H_{\left\lfloor \frac{n}{x} \right\rfloor} - \frac{1}{2}x^2 \left\lfloor \frac{n}{x} \right\rfloor  \left\lfloor  \frac{n}{x}+ 1 \right\rfloor
 +x\left\lfloor \frac{n}{x} \right\rfloor -2x\nonumber, \\
\end{eqnarray*}
Now \eqref{eqn:A21-est} follows, since  the terms  
$x \left\lfloor \frac{n}{x} \right\rfloor -2 x$ and $n H_{\left\lfloor \frac{n}{x} \right\rfloor}$ contribute $O \left( n \log n\right)$.

We assert that
\begin{equation}\label{eqn:A22-est} 
A_{22}(n,x)  = n(n-1) \log \frac{n}{x} - \frac{1}{2}n^2 + \frac{1}{2} x^2 + O\left( n \log n\right).
\end{equation}
To see this, we have $A_{22} = 2( \sum_{y=x+1}^{n-1} y \log y) - 2( \sum_{y=x+1}^{n-1} y \log x).$ Now 
\begin{eqnarray*}
2 \sum_{y=x+1}^{n-1} y \log y &=&  2 \int_{x+1}^{n} y \log y \,dy + O \left( n \log n\right) =  \left(y^2 \log y - \frac{1}{2}y^2\right) |_{x+1}^{n} + O \left( n \log n\right) \\
&=& n^2 \log n - (x+1)^2 \log (x+1) -  \frac{1}{2} n^2 + \frac{1}{2}x^2 + O  \left( n \log n\right).
\end{eqnarray*}
We have also 
\begin{eqnarray*}
2\sum_{y=x+1}^{n-1} y \log x  &=& 
 n(n-1)\log x  -  x(x+1) \log x \\
&=& \left( n(n-1)\log n -  n(n-1) \log \frac{n}{x}\right) - (x+1)^2 \log (x+1) + O \left( n \log n \right).
\end{eqnarray*} 
Subtracting the last two estimates yields \eqref{eqn:A22-est}.

We assert that
 \begin{equation}\label{eqn:A23-est} 
A_{23}(n,x)  = -x^2 \left\lfloor \frac{n}{x}\right\rfloor^2 
%
- x^2\left\lfloor \frac{n}{x}\right\rfloor  + 2nx\left\lfloor \frac{n}{x} \right\rfloor +O\left(n \right).
\end{equation}
To see this, we have 
\begin{eqnarray*} 
A_{23}(n,x)  & = & \sum_{y=x+1}^{n-1}  2x \left\lfloor \frac{y}{x} \right\rfloor 
 =\left( 2x \left(\sum_{j=1}^{j_0-1} jx \right) -2x\right)  + 2 x  \left\lfloor \frac{n}{x} \right\rfloor \lfloor \ell +1\rfloor  \\
&=  &   x^2 \left(\left\lfloor \frac{n}{x}-1\right\rfloor  \right)\left(\left\lfloor \frac{n}{x}\right\rfloor \right) + 2 x\left\lfloor x \left\{ \frac{n}{x} \right\} \right\rfloor  \left\lfloor \frac{n}{x} \right\rfloor +O\left(n\right)  \nonumber. 
\end{eqnarray*}
%
%
We obtain \eqref{eqn:A23-est} by simplifying the last term on the right using
\begin{equation*} \label{eqn:fractional_part_simplify}  
\left\lfloor x \{ \frac{n}{x} \} \right\rfloor = x\left\{ \frac{n}{x} \right\} +O(1)= x\left( \frac{n}{x} - \left\lfloor \frac{n}{x} \right\rfloor \right)+O(1)  = n- x \left\lfloor \frac{n}{x} \right\rfloor +O(1). 
\end{equation*}

We insert the estimates  \eqref{eqn:A21-est} -\eqref{eqn:A23-est} 
into  $A_2(n,x)= A_{21}(n,x) - A_{22}(n,x) -A_{23}(n,x)$
(we replace coefficients $n(n-1)$  with $n^2$ modulo the remainder term), to obtain
\begin{equation}\label{eqn:A2-est}
A_2 (n, x)=   n^2 \left( H_{\left\lfloor \frac{n}{x} \right\rfloor} - \log \frac{n}{x}\right) 
+ \frac{1}{2} x^2 \left\lfloor \frac{n}{x} \right\rfloor^2   + \frac{1}{2} x^2 \left\lfloor \frac{n}{x} \right\rfloor       
-2nx \left\lfloor \frac{n}{x} \right\rfloor  + O \left( n \log n\right).
\end{equation}
Substituting the estimates \eqref{eqn:A1-est} and \eqref{eqn:A2-est} for $A_1(n, x)$ and $A_2(n,x)$ into  \eqref{eqn:Anx-est-0} 
yields \eqref{eqn:Anx-bound}.\smallskip

(2) Assuming the Riemann hypothesis, using the estimates of Theorem \ref{thm:ABn} for $B(n)$, and 
Theorem \ref{thm:Bnx}(2) for $B(y,x)$, the remainder term estimate for $A(n, x)$ given in \eqref{eqn:Anx-est-0} improves 
  to $O\left( n^{7/4} (\frac{n}{x}) (\log n)^2\right),$ for the range $n^{3/4} \le x \le n$. The reminder terms in all other estimates
are already $O (n \log n)$ so are absorbed in this remainder term. 
\end{proof}


\begin{proof}[Proof of Theorem \ref{thm:Anx-cor}] 
The result follows from Theorem \ref{thm:Anx} on choosing  $x= \alpha n$ and simplifying.
\end{proof}

\begin{rem}\label{rem:38}
The function $f_A(\alpha)$ defined by  \eqref{eqn:Anx-parametrized} has  $f_A(1)=\frac{3}{2} -\gamma$, and has  $\lim_{\alpha \to 0} f_A(\alpha)=0$   since 
$ H_{\lfloor \frac{1}{\alpha}\rfloor}- \log \frac{1}{\alpha}  \to \gamma$
as $\alpha \to 0$. 
\end{rem}

 %
%
\subsection{Simplified formulas  for main terms $A_0(n,x)$ and $B_0(n,x)$ when $x=o(n)$}\label{subsec:ABnx}

The main terms  $A_0(n,x)$ and $B_0(n,x)$  appearing in  Theorem \ref{thm:Anx} 
and Theorem \ref{thm:Bnx}   necessarily have  a complicated form, 
because they must describe the oscillations visible in the functions $f_A(\alpha)$ and $f_{B}(\alpha)$.
Here we  show their asymptotics simplify when $x=o(n)$,.

\begin{thm}\label{thm:ABnx} {\rm (Asymptotics of $A_0(n,x)$ and $B_0(n,x)$)} 

 (1)  Uniformly for  $n \ge 1$ and  all $1\le x\le n$,
\begin{equation}\label{eqn:A0x-asymp}
A_0(n,x) = nx + O(x^2). 
\end{equation}

(2) Uniformly for $n \ge 1$ and all $1 \le x \le n$,
\begin{equation}\label{eqn:B0x-asymp}
B_0(n,x) = \frac{1}{2}nx + O(x^2). 
\end{equation} 
\end{thm}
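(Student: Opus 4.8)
The plan is to work directly from the explicit floor-function formulas for $f_A$ and $f_B$ proved in Theorems \ref{thm:Anx} and \ref{thm:Bnx}, writing $A_0(n,x)=n^2 f_A(x/n)$ and $B_0(n,x)=n^2 f_B(x/n)$, and to extract the main term by elementary manipulation. Throughout I set $m:=\lfloor n/x\rfloor$ and $\ell:=n-mx$, so that $0\le\ell<x$, hence $\ell=O(x)$, and the exact identity $mx=n-\ell$ holds. First I would dispose of the range $x>n/2$: there $m=1$, and since $f_A$ and $f_B$ are continuous, hence bounded, on $[\tfrac12,1]$, both $A_0(n,x)$ and $B_0(n,x)$ are $O(n^2)=O(x^2)$, while $nx\le n^2=O(x^2)$, so \eqref{eqn:A0x-asymp} and \eqref{eqn:B0x-asymp} hold trivially. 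In the remaining range $1\le x\le n/2$ one has $m\ge2$ and $m>\tfrac nx-1\ge\tfrac{n}{2x}$, which gives the crucial smallness bound $\tfrac1m\le\tfrac{2x}{n}$; combined with the identity $\tfrac1m-\tfrac xn=\tfrac{\ell}{mn}$ and $\ell<x$ this yields
\[
\frac1m=\frac xn+O\!\left(\frac{x^2}{n^2}\right)\qquad\text{and}\qquad\frac1{m^2}=O\!\left(\frac{x^2}{n^2}\right).
\]

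The second step is to handle the harmonic-number contribution $H_m-\log\tfrac nx$ common to both formulas. Using the standard expansion $H_m=\log m+\gamma+\tfrac1{2m}+O(1/m^2)$ together with $\log m-\log\tfrac nx=\log\tfrac{mx}{n}=\log(1-\ell/n)=-\ell/n+O(x^2/n^2)$ and the two bounds above, I get the single identity
\[
n^2\!\left(H_{\lfloor n/x\rfloor}-\log\tfrac nx\right)=-n\ell+\gamma n^2+\tfrac12 nx+O(x^2),
\]
valid for $1\le x\le n/2$. Note that the summand $\tfrac12 nx$ comes entirely from the $\tfrac1{2m}$ term of the harmonic expansion, so that refinement is essential here and cannot be discarded.

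The third step rewrites the remaining polynomial terms. Since $\lfloor n/x\rfloor\cdot x=n-\ell$, one has the exact identities $(x/n)^2\lfloor n/x\rfloor^2\,n^2=(n-\ell)^2$, $(x/n)^2\lfloor n/x\rfloor\,n^2=x(n-\ell)$, and $(x/n)\lfloor n/x\rfloor\,n^2=n(n-\ell)$, whence
\[
A_0(n,x)=\left(\tfrac32-\gamma\right)n^2+n^2\!\left(H_m-\log\tfrac nx\right)+\tfrac12(n-\ell)^2+\tfrac12 x(n-\ell)-2n(n-\ell),
\]
\[
B_0(n,x)=(1-\gamma)n^2+n^2\!\left(H_m-\log\tfrac nx\right)-n(n-\ell),
\]
exactly. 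Substituting the identity from the second step, expanding $(n-\ell)^2=n^2-2n\ell+\ell^2$, and absorbing $\ell^2=O(x^2)$ and $x\ell=O(x^2)$, one finds that the $n^2$-coefficients cancel ($\tfrac32-\gamma+\gamma+\tfrac12-2=0$ and $1-\gamma+\gamma-1=0$) and so do the $n\ell$-coefficients, leaving $A_0(n,x)=nx+O(x^2)$ and $B_0(n,x)=\tfrac12 nx+O(x^2)$, which is the assertion.

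The argument is entirely elementary and I expect no genuine obstacle; the only point requiring care is that the error must be $O(x^2)$ \emph{uniformly} in $n$, which is exactly why the reduction to $x\le n/2$ (equivalently $m\ge2$) together with the bound $n/m<2x$ is needed — without it $\tfrac1m$ is only $\tfrac xn+O(\tfrac1n)$, too weak to control either the harmonic term or the polynomial terms.
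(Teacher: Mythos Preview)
Your proof is correct and follows essentially the same route as the paper's: split off the range of small $m$ by boundedness, then in the main range use the harmonic-number expansion $H_m=\log m+\gamma+\tfrac1{2m}+O(1/m^2)$ together with a Taylor expansion of $\log(1-\ell/n)$, and verify that all $n^2$ and $n\ell$ terms cancel. The only cosmetic differences are that you work with the integer remainder $\ell=n-mx$ where the paper uses the fractional part $\{n/x\}$, and you cut at $x\le n/2$ rather than $x\le n/3$; your use of the exact identity $mx=n-\ell$ to rewrite the polynomial terms is in fact a bit tidier than the paper's substitution $\lfloor t\rfloor=t-\{t\}$.
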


\begin{proof} We prove (2) and then (1).

(2)  Recall  $B_0(n,x) = f_{B}(\frac{x}{n}) n^2$ with
\begin{equation*}\label{eqn:B0nx-asymp}
f_B\left(\frac{x}{n}\right)  = \big(1-\gamma \big)+ \left(  H_{\left\lfloor \frac{n}{x}\right\rfloor} -  \log \frac{n}{x} \right) -  \left(  \left\lfloor \frac{n}{x} \right\rfloor  \frac{x}{n} \right) .
\end{equation*}
For  $t>1$,  we have 
\begin{equation} \label{eqn:harmonic-number-estimate}
H_{\lfloor t \rfloor} = \log \lfloor t \rfloor + \gamma + \frac{1}{2}\frac{1}{\lfloor t \rfloor} + O \left( \frac{1}{t^2} \right) 
\end{equation}
where $\gamma$ is Euler's constant, cf. \cite[eqn. (3.1.11)]{Lag:13}. 
(This estimate is valid  only at integer values $\lfloor t \rfloor$ because the remainder term is smaller than the jumps of
the step function at $\lfloor t \rfloor$.)  Taking $t = \frac{n}{x} \ge 3$, we obtain
\begin{eqnarray*}
f_{B}(\frac{x}{n} ) &= & \big(1-\gamma \big) +\left( \log \left\lfloor \frac{n}{x} \right\rfloor + \gamma +\frac{1}{2 \lfloor n/x\rfloor} + 
O\left(\frac{x^2}{n^2}\right) - \log \frac{n}{x} \right)
- \left\lfloor \frac{n}{x} \right\rfloor \frac{x}{n}. 
\end{eqnarray*}
Substituting $\frac {\lfloor t \rfloor}{t}  = 1- \frac{ \{ t \} }{t}, $ 
with $t= \frac{n}{x}$ the constant terms cancel and we obtain
\begin{equation}\label{eqn:simplified-2}
f_B(\frac{x}{n})  =\frac{1}{2 \lfloor n/x\rfloor}  + \log \left\lfloor \frac{n}{x} \right\rfloor  - \log \frac{n}{x}   +\frac{ \{n/x \}}{x/n} +  O\left(\frac{x^2}{n^2} \right)
\end{equation}
We next observe , for $t \ge 3$,
\begin{equation}\label{eqn:logt-frac}
 \log t - \log \lfloor t \rfloor = \log \left(\frac{t}{\lfloor t \rfloor } \right) = \log \left(1 + \frac{\{ t \} } {\lfloor t \rfloor}\right)  
 = \frac{ \{t \} }{\lfloor t \rfloor} +O \left( \frac{ \{t\}^2 }{ \lfloor t\rfloor^2} \right)= \frac{ \{t \} }{\lfloor t \rfloor} +O \left( \frac{1}{t^2} \right) 
 = \frac{ \{t \} }{ t} +O \left( \frac{1}{t^2} \right).  
 \end{equation}
Substituting this formula with   $t= \frac{n}{x}$ into \eqref{eqn:simplified-2}, the $\frac{x}{n}\{\frac{n}{x} \}$-terms cancel  and  we obtain 
  \begin{equation*}
f_{B}(\frac{x}{n} )  =   \frac{1}{2}\frac{1}{ \lfloor n/x \rfloor} +O \left( \frac{x^2}{n^2}  \right).
\end{equation*}
Using $\frac{1}{\lfloor t\rfloor} - \frac{1}{t} = O\left( \frac{1}{t^2} \right)$  (valid for $t \ge 1$) 
we obtain for $1 \le x \le \frac{1}{3}n$  that
$$
B_0(n,x) = f_{B}(\frac{x}{n}) x^2= \frac{1}{2} nx + O \left( x^2 \right).
$$
This estimate holds for the whole interval  $1 \le x \le n$, by increasing the $O$-constant to $1$ if it is smaller than $1$
since $B_0(n,x) \le B_0(n,n) \le (1-\gamma) n^2$.\smallskip

(1) Recall $A_0(n,x) = n^2 f_A(\frac{x}{n})$ with
\begin{equation*}\label{en:A0nx}
f_A\left(\frac{x}{n} \right) =  \left( \frac{3}{2} -\gamma \right)   +\left(  H_{\lfloor \frac{n}{x} \rfloor} -  \log \frac{n}{x} \right)
+ \frac{1}{2} \frac{x^2}{n^2}\left\lfloor \frac{n}{x}\right\rfloor^2 + \frac{1}{2} \frac{x^2}{n^2} \left\lfloor \frac{n}{x}\right\rfloor  - 2\frac{x}{n} \left\lfloor \frac{n}{x} \right\rfloor.
\end{equation*}
%
Taking $t = \frac{n}{x} \ge 3$, as in (1) we obtain
\begin{equation*}\label{eqn:simplified-4}
f_{A}(\frac{x}{n})  =  \left(\frac{3}{2}-\gamma \right) +\left( \log \left\lfloor \frac{n}{x} \right\rfloor + \gamma +\frac{1}{2 \lfloor n/x\rfloor} + 
O\left(\frac{x^2}{n^2}\right) - \log \frac{n}{x} \right)
+ \frac{1}{2} \frac{x^2}{n^2} \left\lfloor \frac{n}{x}\right\rfloor^2 + \frac{1}{2} \frac{x^2}{n^2} \left\lfloor \frac{n}{x}\right\rfloor  - 2\frac{x}{n} \left\lfloor \frac{n}{x} \right\rfloor.
\end{equation*}
We simplify the last expression by substituting  $ {\lfloor t \rfloor}  = t -  \{ t \} $ with $t = \frac{n}{x}$  to obtain
$$
 \frac{1}{2} \frac{x^2}{n^2} \left\lfloor \frac{n}{x}\right\rfloor^2 + \frac{1}{2} \frac{x^2}{n^2} \left\lfloor \frac{n}{x}\right\rfloor  - 2\frac{x}{n} \left\lfloor \frac{n}{x} \right\rfloor
 = \left(\frac{1}{2} - \frac{x}{n}  \left\{ \frac{n}{x} \right\}   + \frac{1}{2}\frac{x^2}{n^2}\left(\left\{ \frac{n}{x} \right\} \right)^2\right)
  +  \left( \frac{1}{2} \frac{x}{n} -\frac{1}{2} \frac{x^2}{n^2}  \left\{\frac{n}{x} \right\}\right) 
   - \left( 2- 2\frac{x}{n} \left\{ \frac{n}{x} \right\} \right). 
  $$
Substituting this formula  in
the  previous equation 
and using  \eqref{eqn:logt-frac} with $t = \frac{n}{x}$  we find the constant terms and the $\frac{x}{n} \{ \frac{n}{x} \}$-terms on the right side cancel, yielding for $1\le x \le \frac{1}{3} n$, 
\begin{eqnarray*}
f_A(\frac{x}{n})  &= &  \left( \frac{1}{2 \lfloor n/x\rfloor} + 
O\left(\frac{x^2}{n^2}\right)\right)
+\frac{1}{2}\frac{x^2}{n^2} \left(\left\{ \frac{n}{x}\right\} \right)^2  +  \left( \frac{1}{2} \frac{x}{n} -\frac{1}{2} \frac{x^2}{n^2}  \left\{\frac{n}{x} \right\}\right) \\
&=& \frac{x}{n} + O \left(\frac{x^2}{n^2} \right). 
\end{eqnarray*}
Multiplying by $n^2$ gives the result for $1 \le x \le \frac{1}{3}n$, and the estimate extends to $1\le x\le n$  similarly  to  (1),
(possibly changing the $O$-constant) using $A_0(n,x) \le A_0(n,n) =(3/2-\gamma)n^2$.
\end{proof} 

We apply  the simplified asymptotics of Theorem \ref{thm:ABnx} to prove Theorem \ref{thm:correct-asymp}. 

\begin{proof}[Proof of Theorem \ref{thm:correct-asymp} ]
The prime number
theorem together with the  hypothesis $\lim_{j \to \infty} \frac{ \log x_j}{\log n_j} =1$   implies
$$
\pi(x_j) \sim \frac{x_j}{\log x_j} \sim \frac{x_j} {\log n_j} \quad \mbox{as} \quad j \to \infty.
$$ 
We deduce
\begin{equation}\label{eqn:A-star-asymp-2} 
A^{\ast}(n_j, x_j) = \pi(x_j) n_j \log n_j \sim  n_j x_j \quad \mbox{as} \quad j \to \infty.
\end{equation}

For $A(n_j, x_j)$,  Theorem \ref{thm:Anx}(1)  gives
\begin{equation}\label{eqn:Anx-asymp-2} 
A( n_j, x_j) \sim  A_0(n_j, x_j)  \quad \mbox{as} \quad j \to \infty,
\end{equation}  
unconditionally  if $x_j \ge n_j \exp( - \frac{1}{2} c \sqrt{\log n_j})$ for all large enough $j$.
Now  Theorem \ref{thm:ABnx} gives 
$$
A_0(n_j, x_j) \sim x_j n_j
$$
over the entire range where $\frac{x_j}{n_j} \to 0$ and $\lim_{j \to \infty} \frac{\log x_j}{\log n_n} \to 1$. 

We use  Theorem \ref{thm:A2} for the remaining range of $x$ satisfying the hypothesis.
we get that for any sequence having  $\lim_{j \to \infty} \frac{x_j}{n_j} =0$ while  $x_j > (n_j)^{2/3}$ fo all large enough $j$,
we have
$$
A(n_j, x_j) \sim \vartheta(x_j) n_j  \quad \mbox{as} \quad j \to \infty.
$$
Now the prime number theorem implies $\vartheta(x) =\sim x$ as $x \to \infty$,
whence 
\begin{equation}\label{eqn:Anx-asymp-3} 
A(n_j, x_j) \sim x_j n_j \quad \mbox{as} \quad j \to \infty.
\end{equation} 
Combining \eqref{eqn:Anx-asymp-2} and \eqref{eqn:Anx-asymp-3} yields $A(n_j, x_j) \sim A^{\ast}(n_j, x_j)$
in the desired range of $x$.

The proof for $B(n_j, x_j) \sim B^{\ast}(n_j, x_j)$  is identical, using Theorem \ref{thm:Bnx},  Theorem \ref{thm:ABnx}(2), and Theorem \ref{thm:A1} 
in place of  Theorem \ref{thm:Anx} and Theorem \ref{thm:ABnx} (1) and Theorem \ref{thm:A2}. 
The identity  $\log G(n_j, x_j)= A(n_j, x_j) - B(n_j, x_j)$ then yields  the given asymptotic \eqref{eqn:Gnx-asymptotic-0} for $G(n_j, x_j)$. 
\end{proof}

%
%

 \section{Asymptotic estimates  for  $G(n,x)$}\label{sec:Gnx}

We deduce asymptotics of $G(n,x)$ and  study properties of its associated limit function $f_{G}(\alpha)$.

 %
%
\subsection{Estimates for $G(n,x)$}\label{subsec:Gnx}

%
%
\begin{thm}\label{thm:Gnx}
Let $G(n, x) = \prod_{p \le x} p^{\nu_p(\G_n)}$, and set
\begin{equation}\label{eqn:fnx}
f_G(\frac{x}{n}) = \frac{1}{2}  +\frac{1}{2} \left(\frac{x}{n}\right)^2 \left\lfloor \frac{n}{x}\right\rfloor^2 + \frac{1}{2} \left(\frac{x}{n}\right)^2 \left\lfloor \frac{n}{x}\right\rfloor 
- \frac{x}{n} \left\lfloor \frac{n}{x} \right\rfloor.
\end{equation}
for $0< \frac{x}{n} \le 1$.

(1) There is a constant $c >0$  such that for  all  $n \ge 4$ and $1 \le x \le n$, 
\begin{eqnarray}
\log G(n, x ) & = & f_G(\frac{x}{n}) \,n^2+ 
 O \left(n^2 \left(\frac{n}{x} \right) e^{- \frac{c}{2} \sqrt{ \log n}} \right),
\end{eqnarray}
where the implied $O$-constant is absolute.

(2) Assuming the Riemann hypothesis, for all $n \ge 4$ and $1 \le x \le n$,
\begin{equation}
\log G(n, x) =  f_G(\frac{x}{n}) \,n^2+
 O \left(  n^{7/4} \left( \frac{n}{x} \right) (\log n)^2 \right),
\end{equation}
The implied  $O$-constant is absolute. 
\end{thm}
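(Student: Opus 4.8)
The plan is to derive Theorem \ref{thm:Gnx} as an essentially immediate corollary of the decomposition $\log G(n,x) = A(n,x) - B(n,x)$ from \eqref{eqn:GABx}, together with the asymptotics for $A(n,x)$ and $B(n,x)$ already established in Theorem \ref{thm:Anx} and Theorem \ref{thm:Bnx}. Writing $A(n,x) = A_0(n,x) + R_A(n,x)$ and $B(n,x) = B_0(n,x) + R_B(n,x)$ as in those theorems, I would set $R_G(n,x) := R_A(n,x) - R_B(n,x)$ and check that the main terms satisfy $A_0(n,x) - B_0(n,x) = f_G(x/n)\,n^2$ with $f_G$ as in \eqref{eqn:fnx}.

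The one piece of real (though routine) content is this identity of main terms, i.e. $f_G(x/n) = f_A(x/n) - f_B(x/n)$. Comparing \eqref{eqn:A0nx-defn} and \eqref{eqn:A0nx-def1}: the terms $H_{\lfloor n/x\rfloor} - \log(n/x)$ occur with the same sign in $f_A$ and in $f_B$, so they cancel in the difference; Euler's constant cancels between $\tfrac32 - \gamma$ and $1 - \gamma$, leaving the constant $\tfrac12$; and among the floor-function terms, $f_A$ contributes $\tfrac12(x/n)^2\lfloor n/x\rfloor^2 + \tfrac12(x/n)^2\lfloor n/x\rfloor - 2(x/n)\lfloor n/x\rfloor$ while $-f_B$ contributes $+(x/n)\lfloor n/x\rfloor$, for a net $\tfrac12(x/n)^2\lfloor n/x\rfloor^2 + \tfrac12(x/n)^2\lfloor n/x\rfloor - (x/n)\lfloor n/x\rfloor$. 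Summing, this is exactly \eqref{eqn:fnx}; note that all dependence on $\gamma$ and on the harmonic/logarithmic terms has disappeared, which is why $f_G$ is the simpler piecewise-quadratic function of the theorem.

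For part (1), both Theorem \ref{thm:Anx}(1) and Theorem \ref{thm:Bnx}(1) hold on the whole range $1 \le x \le n$ with the common bound $O\bigl(n^2(n/x)e^{-(c/2)\sqrt{\log n}}\bigr)$, so $R_G(n,x) = R_A(n,x) - R_B(n,x)$ inherits that bound and part (1) follows at once. For part (2), Theorem \ref{thm:Anx}(2) and Theorem \ref{thm:Bnx}(2) give, for $n^{3/4} \le x \le n$, the bounds $O\bigl(n^{7/4}(n/x)(\log n)^2\bigr)$ and $O\bigl(n^{7/4}(\log n)^2\bigr)$ respectively, whose difference is $O\bigl(n^{7/4}(n/x)(\log n)^2\bigr)$; this settles $n^{3/4} \le x \le n$. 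The only step requiring a separate word is the extension of part (2) to $1 \le x < n^{3/4}$: here I would use the crude bounds $0 \le \log G(n,x) \le \log G(n,n) = \log\G_n = O(n^2)$ (since $G(n,x) \mid \G_n$, using \eqref{eqn:logG-asymp}) and $|f_G(x/n)|\,n^2 = O(n^2)$ (bounding each term of \eqref{eqn:fnx} via $(x/n)\lfloor n/x\rfloor \le 1$), so that $|\log G(n,x) - f_G(x/n)n^2| = O(n^2)$; since $x < n^{3/4}$ forces $n/x > n^{1/4}$ and hence $n^{7/4}(n/x)(\log n)^2 > n^2(\log n)^2 \gg n^2$, this error is absorbed into the stated bound. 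I do not expect any genuine obstacle: all the analytic work resides in Sections \ref{sec:2} and \ref{sec:asymp-Anx}, and here the only things to get right are the sign bookkeeping in $f_G = f_A - f_B$ and the trivial treatment of the small-$x$ range in the conditional estimate.
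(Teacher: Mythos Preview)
Your proposal is correct and follows essentially the same approach as the paper: the paper's proof simply invokes the identity $\log G(n,x)=A(n,x)-B(n,x)$ and inserts the estimates of Theorems \ref{thm:Anx} and \ref{thm:Bnx}, exactly as you do. Your explicit verification of $f_G=f_A-f_B$ and your trivial treatment of the range $1\le x<n^{3/4}$ in part (2) are in fact more detailed than the paper, which glosses over both points.
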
 

\begin{proof}
Recall from \eqref{eqn:GABx} the identity 
$$
\log G(n, x) = A(n, x) - B(n,x).
$$
The result (1) follows  by inserting  the formulas \eqref{eqn:Anx-bound} in Theorem \ref{thm:Anx} (1)  and \eqref{eqn:Bnx-bound} in Theorem \ref{thm:Bnx} (1)  into  the right side of this identity. 
The result (2) follows using the improved remainder terms in these formulas assuming the Riemann hypothesis.
\end{proof}

\begin{proof}[Proof of Theorem \ref{thm:Gnx-main}]
The theorem follows on choosing $x=\alpha n$ in Theorem \ref{thm:Gnx}, and simplifying.
Note that in the remainder term $\frac{n}{x}= \frac{1}{\alpha}$ appears to make 
the $O$-constant independent of $\alpha$.
\end{proof} 

%
%

 \subsection{Properties of limit function $f_G(\alpha)$}\label{subsec:fGa}

We establish properties of the  limit function $f_G(\alpha)$.

%
%
\begin{lem}\label{lem:fG} {\rm (Properties of $f_G(\alpha)$)}
Let $f_G(\alpha) = \frac{1}{2}   
+ \frac{1}{2} \alpha^2 \left\lfloor \frac{1}{\alpha}\right\rfloor^2 +
 \frac{1}{2} \alpha^2 \left\lfloor \frac{1}{\alpha} \right\rfloor- \alpha \left\lfloor \frac{1}{\alpha} \right\rfloor$.
 
(1) One has
\begin{equation}\label{eqn:fGaj}
f_G(\alpha) = \frac{1}{2} - j \alpha + \frac{1}{2} j(j+1) \alpha^2 \quad \mbox{for}  \quad \frac{1}{j+1} \le \alpha \le \frac{1}{j}.
\end{equation}

(2) The function $f_G(\alpha)$ is continuous on $[0,1]$, taking $f_G(0)=0$. One has $f_G(\frac{1}{j})= \frac{1}{2j}$ for $j \ge 1$.
 
 (3)  The function $f_G(\alpha)$ is not differentiable at $\alpha = \frac{1}{j}$ for $j \ge 2$,
nor at $\alpha=0$.

(4) One has 
\begin{equation}
f_G(\alpha) \le \frac{1}{2} \alpha \quad \mbox{for} \quad 0 \le \alpha \le 1.
\end{equation}
Equality occurs at $\alpha=0$ and at $\alpha = \frac{1}{j}$ for $j \ge 1$, and at no other point in $[0,1]$.
\end{lem}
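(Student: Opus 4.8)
The plan is to extract everything from the single piecewise-quadratic identity \eqref{eqn:fGaj} in part (1) and then read off parts (2)--(4). For part (1), fix an integer $j \ge 1$; on the half-open interval $(\frac{1}{j+1}, \frac{1}{j}]$ one has $\lfloor \frac{1}{\alpha} \rfloor = j$, and substituting this value into the defining formula for $f_G$ and collecting terms gives at once $f_G(\alpha) = \frac{1}{2} - j\alpha + \frac{1}{2} j(j+1)\alpha^2$. To extend the identity to the \emph{closed} interval $[\frac{1}{j+1},\frac{1}{j}]$ I would evaluate both the index-$j$ expression $\frac{1}{2} - j\alpha + \frac{1}{2} j(j+1)\alpha^2$ and the index-$(j+1)$ expression at the common endpoint $\alpha=\frac{1}{j+1}$ and check that each equals $\frac{1}{2(j+1)}$; this endpoint matching is the only thing to verify here, and the same substitution $\alpha=\frac{1}{j}$ records the values $f_G(\frac{1}{j})=\frac{1}{2j}$ for $j\ge1$ used later.

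Part (2) is then essentially free. On each open interval $(\frac{1}{j+1},\frac{1}{j})$ the function is a polynomial, hence continuous; continuity at an interior breakpoint $\alpha=\frac{1}{j}$ ($j\ge 2$) is exactly the endpoint agreement from part (1); and at $\alpha=0$ I would use \eqref{eqn:fGaj} with $j=\lfloor\frac{1}{\alpha}\rfloor\to\infty$: since $1-\alpha < j\alpha \le 1$ we get $j\alpha\to1$, hence $j(j+1)\alpha^2=(j\alpha)(j\alpha+\alpha)\to1$, so $f_G(\alpha)\to\frac{1}{2}-1+\frac{1}{2}=0=f_G(0)$.

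For part (3) I would differentiate each quadratic piece: $f_G'(\alpha) = -j + j(j+1)\alpha$ on $(\frac{1}{j+1},\frac{1}{j})$. At an interior breakpoint $\alpha=\frac{1}{j}$ with $j\ge2$, the left-hand derivative (index $j$) is $-j+(j+1)=1$ while the right-hand derivative (from the index-$(j-1)$ piece on $[\frac{1}{j},\frac{1}{j-1}]$) is $-(j-1)+(j-1)j\cdot\frac{1}{j}=0$; since $1\ne0$, $f_G$ is not differentiable there. The point $\alpha=0$ is the one place that needs care, and I expect it to be the main obstacle in making the statement precise: the difference quotient $f_G(\alpha)/\alpha$ actually does converge — a short computation gives $f_G(\alpha)/\alpha = \frac{1}{2} - \frac{\alpha}{2}\{\tfrac{1}{\alpha}\}\bigl(1-\{\tfrac{1}{\alpha}\}\bigr) \to \frac{1}{2}$ — so the content of the assertion is rather that the \emph{derivative} $f_G'$ has no one-sided limit at $0$: indeed $f_G'(\alpha)=-j+j(j+1)\alpha$ takes the value $0$ at every left endpoint $\alpha=\frac{1}{j+1}$ and the value $1$ at every right endpoint $\alpha=\frac{1}{j}$, hence oscillates, which is the sense in which $f_G$ fails to be (continuously) differentiable from above at $0$.

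Finally, part (4) uses the same piecewise device. On $[\frac{1}{j+1},\frac{1}{j}]$ the claimed inequality $f_G(\alpha)\le\frac{1}{2}\alpha$ is, after clearing, equivalent to $j(j+1)\alpha^2-(2j+1)\alpha+1\le0$; the left-hand quadratic has discriminant $(2j+1)^2-4j(j+1)=1$, so it factors as $j(j+1)\bigl(\alpha-\frac{1}{j+1}\bigr)\bigl(\alpha-\frac{1}{j}\bigr)$, which is $\le0$ precisely on $[\frac{1}{j+1},\frac{1}{j}]$ and vanishes only at the two endpoints. Letting $j$ range over all positive integers covers $(0,1]$ and shows equality holds exactly at the reciprocals $\alpha=\frac{1}{j}$; at $\alpha=0$ both sides vanish, completing the proof. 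Apart from the $\alpha=0$ discussion in part (3), every step is a short algebraic identity — the clean discriminant $=1$ factorization and the endpoint bookkeeping from part (1) — so no estimates are needed.
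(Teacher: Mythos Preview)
Your proof is correct and follows the paper's overall strategy: substitute $\lfloor 1/\alpha\rfloor=j$ on each piece, differentiate the quadratics, and compare to the line $\beta=\tfrac12\alpha$. Two points are worth flagging.

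First, in part (4) you factor the difference $f_G(\alpha)-\tfrac12\alpha$ explicitly via the discriminant, obtaining $j(j+1)\bigl(\alpha-\tfrac{1}{j+1}\bigr)\bigl(\alpha-\tfrac{1}{j}\bigr)\le 0$. The paper instead argues by convexity: the quadratic $f_G(\alpha)-\tfrac12\alpha$ opens upward and vanishes at both endpoints, hence is $\le 0$ between them. Your algebraic version is slightly cleaner and makes the equality cases immediate.

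Second, and more substantively, your treatment of $\alpha=0$ in part (3) is more careful than the paper's. The paper asserts there is ``no limiting difference quotient approaching from the right,'' but your identity
\[
\frac{f_G(\alpha)}{\alpha}=\frac{1}{2}-\frac{\alpha}{2}\,\Bigl\{\tfrac{1}{\alpha}\Bigr\}\Bigl(1-\Bigl\{\tfrac{1}{\alpha}\Bigr\}\Bigr)
\]
shows that the one-sided difference quotient \emph{does} converge to $\tfrac12$, so $f_G$ is in fact right-differentiable at $0$. What genuinely fails is that $f_G'$ has no one-sided limit there (it oscillates between $0$ and $1$ on each piece), which is the sense in which you recast the claim. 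Your reading sharpens the paper on this point.
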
 

\begin{proof}
(1) Suppose $\frac{1}{j+1} < \alpha \le \frac{1}{j}$. Then $\lfloor \frac{1}{\alpha} \rfloor= j$, and $\{ \frac{1}{\alpha} \} = \frac{1}{\alpha} - j$.
Thus
\begin{eqnarray*}
f_G (\alpha) & = & \frac{1}{2} + \frac{1}{2} j^2 \alpha^2 + \frac{1}{2} j \alpha^2 - j\alpha\\
&=& \frac{1}{2} -j \alpha + \frac{1}{2}j(j+1)\alpha^2.
\end{eqnarray*}

(2) The quadratic function on the right side of \eqref{eqn:fGaj} has value $f_G( \frac{1}{j}) = \frac{1}{2j}$ and we check it 
continuously  extends to value $f_G(\frac{1}{j+1}) = \frac{1}{2(j+1)}$. The latter fact establishes continuity 
at the break point $\alpha = \frac{1}{j}$. On the half-open interval $(\frac{1}{j+1}, \frac{1}{j}]$ we have 
$$f'(\alpha) = -j + j(j+1)\alpha$$
which is positive on this interval, so $f(\alpha)$ is increasing on it.  Since $f(\frac{1}{j}) = \frac{1}{2j}$ we conclude $f(\alpha) \le \frac{1}{2j}$ for $0 < x \le \frac{1}{2j}$,
 hence   $\lim_{\alpha \to 0^{+}} f_G(\alpha) =0$.   Thus it is continuous at $\alpha=0$, on setting $f_G(0) =0$.
 
 (3)  At $\alpha=\frac{1}{j+1}$ the  derivative  approaching from the right is $0$ and approaching from the left is $1$. 
 Approaching $\alpha=0$ the derivative oscillates between $0$ and $1$ infinitely many times, and there is no limiting
 difference quotient approaching from the right.
 
 (4) Equality holds at $\alpha= \frac{1}{j+1}$ by property (2).
 On the interval $\frac{1}{j+1} \le \alpha \le \frac{1}{j}$ the quadratic
function is convex upwards, with initial slope $0$, and it touches the line $y= \frac{1}{2} x$ again at $x= \frac{1}{j}$. So the function must lie strictly
below the line $y = \frac{1}{2}x$ inside the interval. 
\end{proof} 
\section{Concluding Remarks}\label{sec:5}

This paper derived asymptotic information about the partial factorizations of products of binomial coefficients using
estimates from  prime number theory. 
 It showed  that   the functions   $A(n,x)$ and $B(n,x)$ related to partial factorizations   have
well-defined asymptotics  as $n \to \infty$, which under proper scalings when $s= \alpha n$ converge to limit functions,  
with  remainder terms  having  a power savings under the Riemann hypothesis. 

One would like to reverse the direction of information flow and derive
from such statistics  estimates
on  the distribution of prime numbers.  
To gain  insight  we  consider  the  simpler case of the central binomial coefficients ${{2n}\choose{n}}$, 
 where a rigorous result is possible.  We define
analogously  the partial factorizations 
\begin{equation}\label{eqn:51}
G_{BC}(2n, x) := \prod_{p \le x} p^{\nu_p( {{2n}\choose{n}})}. 
\end{equation} 
We have the Stirling's formula estimate
\begin{equation}\label{eqn:52}
 G_{BC}(2n,2n) =  {{2n}\choose{n}}= 4^{n + O(\log n)}.
\end{equation}  
Kummer's divisibility criterion implies that if $\sqrt{2n}< p < 2n$ then, for each $k \ge 1$,  
\begin{equation}\label{eqn:53}
\nu_p \left( {{2n}\choose{n}}\right)= \begin{cases}
1 \quad \mbox{if} \quad \frac{2n}{2k} < p \le \frac{2n}{2k-1}, \\
0 \quad \mbox{if} \quad \frac{2n}{2k+1} < p \le \frac{2n}{2k}.
\end{cases} 
\end{equation}
One may deduce in a fashion similar to the arguments in this paper that
\begin{equation}\label{eqn:58}
\log G_{BC}(2n, 2\alpha n) = f_{BC} (\alpha)2n + R_{BC}(2n, 2\alpha n), 
\end{equation}
 where $R_{BC}(2n, 2\alpha n)$ is a remainder term and
 $f_{BC}(\alpha)$ is a limit function defined  for $0 \le  \alpha \le 1$ 
 having   $f_{BC}(1) =\log 2\approx 0.69314$ and $f_{BC}(0)=0$ and 
 \begin{enumerate} 
   \item[(i)]
   $f_{BC}(\alpha)$ 
   is   continuous on $[0,1]$ and is piecewise linear  on $\alpha>0$.
   It is linear on intervals $[\frac{1}{k+1}, \frac{1}{k}]$ for $k \ge 1$.
  \item[(ii)] 
 $f_{BC}(\alpha)$ 
has   slope $1$ on intervals $\frac{1}{2k} \le \alpha \le \frac{1}{2k-1}$.
\item[(iii)]
$f_{BC}(\alpha)$ has 
  slope $0$ on intervals $\frac{1}{2k+1} \le \alpha \le \frac{1}{2k}$.
 \end{enumerate} 
 One can show using \eqref{eqn:53} that the reminder term   $R_{BC}(n, \alpha n)$ is  
 unconditionally of size $O\large(\frac{1}{\alpha}  n \exp( -c \sqrt{\log n})\large)$ and 
 is on the Riemann hypothesis  of size $O\left( \sqrt{\frac{1}{\alpha}}\,  n^{1/2} (\log n)^2\right)$.
 It is pictured in Figure \ref{fig:A51}.

%

\begin{figure}[h] 
\includegraphics[scale=0.60]{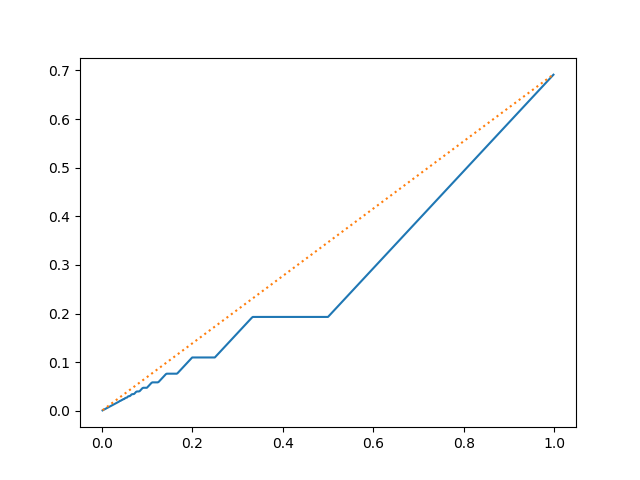}
\caption{Graph of limit function $f_{BC}(\alpha)$ in $(\alpha, \beta)$-plane,  $0 \le \alpha \le 1.$ The dotted line is $\beta= (\log 2 ) \alpha$.}
\label{fig:A51}
\end{figure} 

 The value $\alpha= \frac{1}{2}$  is especially interesting.
 It concerns $G(2n,n)$ and here Kummer's criterion gives
\begin{equation}\label{eqn:54}
  \frac{G_{BC}(2n,2n)}{G_{BC} (2n, n)}=\prod_{n < p \le 2n} p.
\end{equation} 
 It is well known
 that the Riemann hypothesis is equivalent to the assertion that for all integers $n \ge 2$, 
\begin{equation}\label{eqn:55} 
P(n) := \prod_{p \le n}p = e^{n + O(n^{1/2} (\log n)^2)}. 
\end{equation} 
(Taking  logarithms of \eqref{eqn:55},   $\log P(n)$ becomes Chebyshev's first function $\vartheta(n)$ 
and the equivalence follows from Lemma \ref{lem:26} (2).) 
In consequence one can  deduce \footnote{For the reverse direction estimate the logarithm of both sides of the telescoping product 
$$P(2n) = \prod_{j=0}^{\lfloor \log n\rfloor}  \frac{ P(2n/2^j)}{P(2n/2^{j+1})}.$$} 
that the Riemann hypothesis is also equivalent to the assertion that for all $n \ge 2$ ,
\begin{equation}\label{eqn:56} 
\frac{P(2n)}{P(n)} = \prod_{n < p \le 2n} p = e^{n + O (n^{1/2} (\log n)^2)}. 
\end{equation}  
We conclude that the  Riemann hypothesis is equivalent to the assertion that for all $n \ge 2$, 
the partial factorization $G(2n, x)$ with $x=n$ has 
\begin{equation}\label{eqn:57}  
G_{BC}(2n, n) =  G_{BC}(2n,2n) \cdot \frac{P(n)}{P(2n)} = \left( \frac{4}{e} \right)^{n +O (n^{1/2} (\log n)^2))}. 
\end{equation}
Taking logarithms in  \eqref{eqn:57}, we find that the Riemann hypothesis is equivalent to the assertion  that 
at $\alpha= \frac{1}{2}$, for all $n \ge 2$
\begin{equation}\label{eqn:59}
\log G_{BC}(2n,n) = 2 f_{BC} \left( \, \frac{1}{2}\, \right)  n + O\large( n^{1/2} (\log n)^2\large),
\end{equation}
with $f_{BC}(\frac{1}{2}) = \log 2 - \frac{1}{2} \approx 0.19314$. 
Thus the Riemann hypothesis is  encoded in a 
power-savings error term $O (n^{1/2} (\log n)^2)$
in \eqref{eqn:59} at the  {\em  single point}  $\alpha=1/2$.

 The role  the Riemann hypothesis  plays in these estimates 
 concerns the rapidity of convergence of the finite $n$ approximations  to these limit functions,
 and not in the  particular form of the limit function.   
 The central binomial coefficient  exhibits  a situation where   suitable power savings estimate  {\em at a single point}  $\alpha=\frac{1}{2}$   
 is equivalent  the Riemann hypothesis.

It may be   that the  power savings estimates given under RH for
binomial products in this paper for $0 < \alpha<1$ should  imply a zero-free region for the Riemann zeta function
of form $Re(s) > 1- \delta$ for some $\delta >0$.  We do not  know whether a power-savings  estimate at $\alpha= \frac{1}{2}$ alone 
would imply a zero-free region. 

This paper  started from  an expression  for $G(n, n)$ as a ratio of factorials, 
which led to a power savings estimate at $\alpha=1$.
The graph of the  limit function $f_G(\alpha)$ suggests that the values
$x =\frac{n}{j}$ might have special properties, since they lie on the line $y= \frac{1}{2}x.$
One may ask whether  factorial product formulas exist  for values $G(jn,n)$ when $j \ge 2$.

\appendix

\section{Estimates for $A(n,x)$,   $B(n,x)$ and $\log G(n,x)$ via exponential sums} \label{sec:appendix1} 

The following result  was communicated to us by Olivier Bordell\`{e}s.
One can obtain alternate unconditional  bounds for $B(n,x)$  by 
methods of exponential sums, having a  main term involving the first Chebyshev function $\vartheta(x)$,
which have nontrivial unconditional remainder terms in various  ranges where  $x=o(n)$.  
In this Appendix    $B_1(x)= x-\frac{1}{2}$ denotes  the first Bernoulli polynomial.

\begin{thm}\label{thm:A1} 
For $n \ge 1$  an  integer and $1 \le x \le n$ be a real number, set
\begin{equation}\label{eqn:A1}
B(n,x) = \frac{1}{2} \vartheta(x) n + \tR_{B}(n,x)
\end{equation}
where $\vartheta(x)$ is the first Chebyshev function
and $\tR_{B}(n,x)$ is the remainder.
Then for $n^{2/3} \le x \le n$, 
\begin{equation} \label{eqn:A2}
\tR_{B}(n,x)= O \left( x^{5/4} n^{3/4} (\log n)^{7/2}+n^{5/3} \log n \right).
\end{equation} 
\end{thm}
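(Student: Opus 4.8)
The plan is to split off the oscillatory part of $B(n,x)$ as a sum of sawtooth values over primes, and then to estimate that sum by the truncated circle method combined with van der Corput's exponential-sum bounds. \emph{Step 1 (reduction).} Combining Legendre's formula $\nu_p(n!)=\sum_{j\ge1}\lfloor n/p^j\rfloor$ with $\nu_p(n!)=(n-d_p(n))/(p-1)$ (both used in the proof of Lemma~\ref{lem:diff1}) gives $d_p(n)=n-(p-1)\sum_{j\ge1}\lfloor n/p^j\rfloor$, whence, on summing $\sum_{j\ge1}p^{-j}=(p-1)^{-1}$,
\[
\frac{n-1}{p-1}\,d_p(n)=(n-1)\sum_{j\ge1}\Big\{\frac{n}{p^j}\Big\}.
\]
A short computation reduces $\frac{1}{n-1}B(n,x)$ to its $j=1$ part, $\frac{1}{n-1}B(n,x)=\sum_{p\le x}\{n/p\}\log p+O(n^{1/2}\log n)$: the primes $p\le\sqrt n$ contribute $O(n^{1/2})$ by Lemma~\ref{lem:21}, and for $p>\sqrt n$ the terms with $j\ge2$ sum to $n/(p(p-1))$, contributing $\ll n^{1/2}\log n$ in total. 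Writing $\{t\}=B_1(\{t\})+\tfrac12$ with $B_1(x)=x-\tfrac12$, and recalling $\vartheta(x)=\sum_{p\le x}\log p$, one obtains
\begin{equation}\label{eq:prop-reduction}
\tR_B(n,x)=B(n,x)-\tfrac12\vartheta(x)\,n=(n-1)\,\Sigma(n,x)+O\big(n^{3/2}\log n\big),\qquad \Sigma(n,x):=\sum_{p\le x}B_1\!\Big(\Big\{\frac np\Big\}\Big)\log p,
\end{equation}
so that, since $n^{3/2}\log n\le n^{5/3}\log n$, it suffices to prove $\Sigma(n,x)\ll x^{5/4}n^{-1/4}(\log n)^{7/2}+n^{2/3}\log n$.

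\emph{Step 2 (truncated Fourier expansion).} Fix a parameter $H\ge2$. After discarding the $O(\log n)$ primes dividing $n$ (which alter $\Sigma(n,x)$ by at most $\tfrac12\log n$), I would apply a Vaaler-type truncated Fourier approximation to the sawtooth: there are coefficients $c_h\ll|h|^{-1}$ and a nonnegative trigonometric polynomial $\Delta_H(t)=\sum_{|h|\le H}\hat\Delta_H(h)\,e(ht)$, where $e(u):=e^{2\pi iu}$ and $\hat\Delta_H(h)\ll H^{-1}$, such that $|B_1(\{t\})-\sum_{1\le|h|\le H}c_he(ht)|\le\Delta_H(t)$ for $t\notin\ZZ$. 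Writing $E(h):=\sum_{p\le x}e(hn/p)\log p$ (so $E(0)=\vartheta(x)$ and $E(-h)=\overline{E(h)}$), this reduces matters to
\[
|\Sigma(n,x)|\ll \frac{x}{H}+\sum_{1\le h\le H}\frac{1}{h}\,|E(h)|,
\]
the $H^{-1}$-weight produced by $\Delta_H$ being absorbed into the $h^{-1}$-weight since $H^{-1}\le h^{-1}$ for $h\le H$.

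\emph{Step 3 (bounding the prime exponential sums).} One needs genuine cancellation here, roughly $E(h)\ll x^{5/4}(hn)^{-1/4}(\log n)^{O(1)}$ --- a saving of $(n/x)^{1/4}$ over the trivial bound $E(h)\ll\vartheta(x)$. Replacing $\log p$ by $\Lambda$ costs only $O(\sqrt n\log^2 n)$, and Vaughan's identity with cutoffs $U,V$ would write $\sum_{m\le x}\Lambda(m)e(hn/m)$ as $O(\log n)$ Type~I sums $\sum_{d\le D}\beta_d\sum_{\ell\le x/d}e(hn/(d\ell))$ (with divisor-bounded $\beta_d$ and $D\le UV$) and Type~II sums $\sum_{d\sim D}\sum_{\ell\sim M}\alpha_d\beta_\ell\,e(hn/(d\ell))$ with $DM\asymp x$ and $U\le D\le x/V$. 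In each of these the phase $\ell\mapsto hn/(d\ell)$ has second derivative $\asymp hn/(dM^3)$, so van der Corput's second-derivative test (supplemented by the Kusmin--Landau bound on subintervals where the first derivative varies little) gives $\sum_{\ell\sim M}e(hn/(d\ell))\ll(hn/(dM))^{1/2}+(dM^3/(hn))^{1/2}$; summing this against $\beta_d$ over $d$ and over dyadic $M$, with partial summation to remove the $\log\ell$ weight, would dispose of the Type~I terms. For the Type~II terms, Cauchy--Schwarz in $d$ followed by van der Corput applied to $\sum_{d\sim D}e\big(hn(\ell_1-\ell_2)/(\ell_1\ell_2d)\big)$ (second derivative $\asymp hn|\ell_1-\ell_2|/(D^3M^2)$, with the diagonal $\ell_1=\ell_2$ split off) would yield a bound of the shape $(\log n)^{O(1)}\big(Dx+(hn)^{1/2}x^2D^{-3/2}+x^{5/2}(hn)^{-1/2}\big)^{1/2}$; because $x\ge n^{2/3}$, the $D$-independent term $x^{5/4}(hn)^{-1/4}$ dominates once $D$ is taken in its optimal admissible range, giving the Type~II bound $\ll(\log n)^{O(1)}x^{5/4}(hn)^{-1/4}$.

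\emph{Step 4 (assembly) and the main obstacle.} Summing $h^{-1}|E(h)|$ over $1\le h\le H$, the convergent tail $\sum_h h^{-5/4}$ means no factor of $\log H$ is lost, so the $h$-sum is $\ll(\log n)^{O(1)}x^{5/4}n^{-1/4}$ apart from terms that, after multiplication by $n-1$, are absorbed into $n^{5/3}\log n$; substituting into Step~2 and choosing $H\asymp x/(n^{2/3}\log n)$ (or $H=O(1)$ when $x$ is close to $n^{2/3}$) makes the truncation cost $x/H$ equal to $n^{2/3}\log n$. Multiplying through by $n-1$ and using \eqref{eq:prop-reduction} then gives $\tR_B(n,x)\ll x^{5/4}n^{3/4}(\log n)^{7/2}+n^{5/3}\log n$, as asserted. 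The hard part will be the Type~II bilinear exponential sum of Step~3: one must pick the Vaughan cutoffs $U,V$ and the truncation $H$ so that the optimal Cauchy--Schwarz length $D$ lies in an admissible range uniformly for $h\le H$ and so that the residual Type~I contribution remains within the target --- it is precisely here that the hypothesis $x\ge n^{2/3}$ is used. A secondary, more routine nuisance will be tracking the logarithmic factors (from Vaughan's identity, the double dyadic decomposition, the divisor bound on $\beta_\ell$, and the Cauchy--Schwarz step) carefully enough to arrive at the exponent $(\log n)^{7/2}$ rather than a larger power.
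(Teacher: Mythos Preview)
Your Steps~1--2 coincide with the paper's argument: the paper likewise arrives at
\[
\tR_B(n,x)=(n-1)\sum_{\sqrt n<m\le x}\Lambda(m)\,B_1\!\Big(\Big\{\frac{n}{m}\Big\}\Big)+O\big(n^{3/2}\log n\big)
\]
and then applies the same Vaaler-type truncated Fourier expansion. For Step~3 the paper does not redo the Vaughan/van der Corput computation but simply quotes the ready-made estimate of Granville--Ramar\'e \cite[Theorem~9$'$]{GraR96}: for $2N^{2/3}\le M\le N$ and $M\le y'\le2M$,
\[
\Bigl|\sum_{M<m\le y'}\Lambda(m)\,e\!\left(\tfrac{N}{m}\right)\Bigr|\le 5\,M\Big(\tfrac{M}{N}\Big)^{1/4}(\log 16M)^{5/2},
\]
which packages exactly the Type~I/II calculation you sketch.

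The one substantive difference is the \emph{order} of the dyadic decomposition and the Vaaler truncation, and here your global-$H$ version leaves a real (though easily reparable) gap. The paper first trivially discards $m\le 2n^{2/3}$ at cost $\ll n^{2/3}$, then splits the remaining range into dyadic blocks $M<m\le2M$, and only \emph{then} truncates each block with its own parameter $H=H(M):=\big\lfloor\tfrac12 M^{3/2}/n\big\rfloor$. That choice forces $2(hn)^{2/3}\le M$ for every $h\le H$, so the Granville--Ramar\'e hypothesis holds across the whole $h$-range in every block; each block then contributes $M/H\asymp nM^{-1/2}\le n^{2/3}$ from the truncation plus $\sum_h h^{-1}M^{5/4}(hn)^{-1/4}(\log n)^{5/2}\ll x^{5/4}n^{-1/4}(\log n)^{5/2}$ from the exponential sums, and the extra $\log n$ in \eqref{eqn:A2} comes from the dyadic maximum in \eqref{eqn:dyadic-sum-0}. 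With your single global $H\asymp x/(n^{2/3}\log n)$, by contrast, each $E(h)$ carries a small-$m$ tail $m\lesssim(hn)^{2/3}$ that is not covered by the second-derivative estimate; bounding that tail trivially adds $(hn)^{2/3}$ to $|E(h)|$, and then
\[
\sum_{h\le H}\frac{1}{h}\,(hn)^{2/3}\asymp n^{2/3}H^{2/3}\asymp x^{2/3}n^{2/9},
\]
which exceeds the target $x^{5/4}n^{-1/4}(\log n)^{7/2}+n^{2/3}\log n$ throughout the sub-range roughly $n^{2/3}\ll x\ll n^{17/21}$. Your remark that such terms ``are absorbed into $n^{5/3}\log n$'' is therefore not correct in that range. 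Switching to the paper's block-dependent $H(M)$ removes this difficulty cleanly and also lets you replace your Step~3 by a one-line citation.
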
  

\begin{proof}
Using Lemma  \ref{lem:21} and $\lfloor x\rfloor = x- \{ x\}$, 
we have
\begin{eqnarray*} 
B(n,x) &=& (n-1) \sum_{n^{1/2} <p \le x} \frac{\log p}{p-1} \left( n - (p-1) \lfloor \frac{n}{p} \rfloor\right) + O\left( n^{3/2} \right) \\
&=& n(n-1) \sum_{n^{1/2} <p \le x} \frac{\log p}{p-1} -    (n-1) \sum_{n^{1/2} <p \le x} \lfloor \frac{n}{p} \rfloor \log p + O \left( n^{3/2} \right)    \\
&=& \left( n(n-1) \sum_{n^{1/2} <p \le x} \frac{\log p}{p}  +    n(n-1) \sum_{n^{1/2} <p \le x} \frac{\log p}{p(p-1)}  \right)   \\
&& +\left( - n(n-1)\sum_{n^{1/2} <p \le x} \frac{\log p}{p} + (n-1)\sum_{n^{1/2} <p \le x}\{ \frac{n}{p} \} \log p \right) + O \left( n^{3/2} \right)\\
&=& n(n-1) \sum_{n^{1/2} <p \le x} \frac{\log p}{p(p-1)}+ (n-1) \sum_{n^{1/2} <p \le x} \{ \frac{n}{p} \} \log p + O \left( n^{3/2}  \right) .
\end{eqnarray*}  

The first Bernoulli polynomial has  $B_1(\{\frac{n}{p}\} ) = \{ \frac{n}{p} \} - \frac{1}{2}$, whence 
\begin{eqnarray*} 
B(n,x) &=&  n(n-1) \sum_{n^{1/2} <p \le x} \frac{\log p}{p(p-1)} + (n-1) \sum_{n^{1/2} <p \le x} B_1(\{\frac{n}{p}\}) \log p \\
&&+ \frac{n-1}{2}\left( \vartheta(x) - \vartheta (n^{1/2}) \right) 
                + O\left( n^{3/2} \right),\\
&=& \frac{1}{2} \vartheta(x) n + (n-1) \sum_{n^{1/2} <p \le x} B_1( \{ \frac{n}{p}\} ) \log p +O\left( n^{3/2} \log n \right). 
\end{eqnarray*}
We obtain 
\begin{equation} \label{eqn:lambda-estimate}
B(n,x) = \frac{1}{2} \vartheta(x) n + (n-1) \sum_{n^{1/2} < m \le x}\Lambda(m) B_1(\{\frac{n}{m}\})   +O\left( n^{3/2} \log n \right),
\end{equation} 
by inserting $O\left( \frac{n^{1/2}}{\log n} \right)$ extra nonzero terms $\Lambda(m)$  inside  the sum, each of size $O(\log n)$.

We  estimate the sum containing the von Mangoldt function.
If $ x \le 2n^{2/3}$ then 
$$|\sum_{n^{1/2} < m \le x}\Lambda(m) B_1(\{\frac{n}{m}\})| \le | \sum_{n^{1/2} < m \le 2n^{2/3}}\Lambda(m)| \ll n^{2/3},$$
which gives   \eqref{eqn:A2} with remainder term $\tR_{B}(n,x)=O (n^{5/3})$. 
For $2n^{2/3} \le x \le n$, 
 we have 
\begin{eqnarray}\label{eqn:dyadic-sum-0} 
|\sum_{n^{1/2} <m \le x} \Lambda(m) B_1(\{\frac{n}{m}\} )|  &\le &| \sum_{n^{1/2} <m \le 2n^{2/3}}\Lambda(m) B_1(\{\frac{n}{m}\})|  + |\sum_{2n^{2/3} <m \le x}\Lambda(m) B_1(\{\frac{n}{m}\})\, |  \nonumber \\
&\ll &n^{2/3} + \log n \left(\max_{2n^{2/3} < M \le x} | \sum_{M<m\le \min(2M, x) } \Lambda(m) B_1( \{ \frac{n}{m} \})\, |\right),
\end{eqnarray} 
We use the following estimate, cf. Graham and Kolesnik \cite[Theorem A6]{GK91}.
  For each integer $H \ge 1$ 
  and for $2n^{2/3}< M \le n$,
\begin{equation}\label{eqn:dyadic-sum-1}
|\sum_{M < m \le \min(2M,x)} \Lambda(m) B_1(\{\frac{n}{m}\})| \ll \frac{1}{H} \left(\sum_{M< m\le 2M} \Lambda (m)\right)  + \sum_{h=1}^H \frac{1}{h} | \sum_{M<m \le \min(2M,x)} \Lambda(m) \exp( \frac{2\pi i hn}{m})\, | ,
\end{equation}
with the  implied constant in $\ll$ being independent of $H$. (It  is based on  trigonometric polynomial majorants and minorants to 
the sawtooth function $B_1(\{x\})$.)  
We apply an  exponential sum estimate of  Granville and Ramar\`{e} \cite[Theorem 9', p.77]{GraR96}, which says: For $2n^{2/3} \le  M \le n$,
and any $y'$ with  $M \le y' \le 2M$, 
$$
| \,\sum_{M < m \le  y'} \Lambda(m) \exp( \frac{2 \pi i n}{m})\,| \le 5 M (\frac{M}{n})^{1/4} (\log 16M)^{5/2}. 
$$
Substituting this bound in \eqref{eqn:dyadic-sum-1}  (with $n$ replaced by $hn$ as needed) yields
for any integer $H \ge 1$ 
such that $ 2(Hn)^{2/3} \le M \le x$, 
\begin{eqnarray*}
|\sum_{M<m\le \min(2M,x)} \Lambda(m) B_1(\{ \frac{n}{m} \})\, | 
& \ll & \frac{M}{H} + \sum_{h=1}^H \frac{1}{h} \frac{M^{5/4}}{(hn)^{1/4}}(\log M)^{5/2} \\
&\ll& \frac{M}{H} + n^{-1/4} M^{5/4} (\log M)^{5/2} \\
&\ll& n M^{-1/2} + n^{-1/4} M^{5/4} (\log M)^{5/2},
\end{eqnarray*} 
where to get the  last line we choose $H= \lfloor \frac{1}{2}M^{3/2} n^{-1}\rfloor$.
Substituting these bounds into \eqref{eqn:dyadic-sum-0}, noting that $2n^{2/3} \le M \le x$, we obtain  for $2n^{2/3}\le x \le n$,
\begin{equation*}
| \sum_{n^{1/2}< m \le x} \Lambda(m) B_1(\{\frac{n}{m}\} ) |= O \left(    x^{5/4} n^{-1/4}(\log n)^{7/2} + n^{2/3}\log n \right).
\end{equation*} 
Substituting this estimate  in  \eqref{eqn:lambda-estimate} gives the desired bound for $\tR_{B}(n,x)$. 
 \end{proof} 

Following  the combinatorial approach in this paper, one
can deduce from Theorem \ref{thm:A1}
the following estimate for $A(n,x)$.

\begin{thm}\label{thm:A2} 
For  $n \ge 1$ an integer and $1 \le x \le n$  a real number,  set 
\begin{equation}\label{eqn:A3}
A(n,x) =  \vartheta(x) n + \tR_{A}(n,x)
\end{equation}
where $\vartheta(x)$ is the first Chebyshev function
and $\tR_{A}(n,x)$ is the remainder.
Then for  $ n^{2/3} \le x \le n$, 
\begin{equation} \label{eqn:A4}
\tR_{A}(n,x) = 
O \left( x^{5/4} n^{3/4} (\log n)^{7/2} + n^{5/3} (\log n)^2 \right).
\end{equation} 
\end{thm}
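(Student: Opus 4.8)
The plan is to deduce Theorem~\ref{thm:A2} from Theorem~\ref{thm:A1} in exactly the way Theorem~\ref{thm:Anx} was deduced from Theorem~\ref{thm:Bnx}: feed the $B(n,x)$-estimate into the recursion \eqref{eqn:AB-recursion}. Write $m=\lceil x\rceil$. Summing the identity $S_p(n)=S_p(m)+\sum_{j=m}^{n-1}d_p(j)$ against $\frac{2}{p-1}\log p$ over $p\le x$ gives
\begin{equation*}
A(n,x)=A(m,x)+\sum_{j=m}^{n-1}\frac{2}{j-1}\,B(j,x),
\end{equation*}
where the terms with small $j$ have been absorbed into $A(m,x)$. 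The key preliminary check is that Theorem~\ref{thm:A1} applies to every $B(j,x)$ appearing here: for $m\le j\le n-1$ one has $j\le n$, and the hypothesis $x\ge n^{2/3}$ forces $j^{2/3}\le n^{2/3}\le x\le \lceil x\rceil\le j$, so the requirement $j^{2/3}\le x\le j$ of Theorem~\ref{thm:A1} is met. Thus $B(j,x)=\tfrac12\vartheta(x)\,j+\tR_B(j,x)$ with $\tR_B(j,x)=O\big(x^{5/4}j^{3/4}(\log n)^{7/2}+j^{5/3}\log n\big)$ uniformly in the relevant $j$.

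Substituting this and using $\sum_{j=m}^{n-1}\tfrac{j}{j-1}=n-x+O(\log n)$ together with the Chebyshev bound $\vartheta(x)=O(x)$, the main term contributes $\vartheta(x)(n-x)+O(x\log n)$. For the base term, the trivial estimates $0\le A(m,x)\le A(m)=O(m^2)=O(x^2)$ (via Theorem~\ref{thm:ABn}, or via $A(m,x)\le A^{\ast}(m,x)=\pi(x)m\log m$ from \eqref{eqn:Anx-star-formula}) and $\vartheta(x)\,x=O(x^2)$ give $A(m,x)=\vartheta(x)\,x+O(x^2)$, hence $A(m,x)+\vartheta(x)(n-x)=\vartheta(x)\,n+O(x^2)$. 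The tail of remainders is handled term by term:
\begin{equation*}
\sum_{j=m}^{n-1}\frac{2}{j-1}\,\bigl|\tR_B(j,x)\bigr|\ \ll\ x^{5/4}(\log n)^{7/2}\sum_{j\le n}j^{-1/4}+(\log n)\sum_{j\le n}j^{2/3}\ \ll\ x^{5/4}n^{3/4}(\log n)^{7/2}+n^{5/3}\log n.
\end{equation*}
Collecting the pieces, and absorbing the errors $O(x^2)$ and $O(x\log n)$ into $O(x^{5/4}n^{3/4}(\log n)^{7/2})$ (legitimate since $x\le n$), one obtains $A(n,x)=\vartheta(x)\,n+\tR_A(n,x)$ with $\tR_A(n,x)=O\big(x^{5/4}n^{3/4}(\log n)^{7/2}+n^{5/3}(\log n)^2\big)$ as asserted.

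There is no deep obstacle in this argument: all of the genuinely hard analytic input --- the Granville--Ramar\'e exponential sum bound controlling $\sum\Lambda(m)B_1(\{n/m\})$ --- is already contained in Theorem~\ref{thm:A1}. The two points that must be watched are: (i) the range verification that lets one apply Theorem~\ref{thm:A1} to \emph{every} $B(j,x)$ in the recursion, which is exactly where the restriction $x\ge n^{2/3}$ is used and without which the deduction fails; and (ii) the elementary but slightly lossy summation of the remainder terms, where the weight $1/(j-1)$ and the endpoint contributions in $\sum_{j\le n}j^{2/3}\log j$ account comfortably for the stated $n^{5/3}(\log n)^2$ (a logarithm to spare). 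A minor technical point is that for real $x$ the recursion endpoints must be taken as $\lceil x\rceil$, which perturbs only the $O(x)$-sized small-$j$ part and so is harmless.
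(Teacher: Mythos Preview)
Your proposal is correct and follows essentially the same route as the paper: both feed the estimate of Theorem~\ref{thm:A1} into the recursion \eqref{eqn:AB-recursion}, bound the base term $A(\lceil x\rceil,x)$ trivially by $O(x^2)$, extract $\vartheta(x)n$ from the telescoped main term, and sum the remainders via $\sum_{j\le n}j^{-1/4}\ll n^{3/4}$ and $\sum_{j\le n}j^{2/3}\ll n^{5/3}$. Your explicit verification that $j^{2/3}\le x\le j$ holds for every $j$ in the sum (and your handling of $\lceil x\rceil$) are points the paper leaves implicit, but the argument is otherwise identical.
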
  

\begin{proof} We use the combinatorial identity
$$
A(n,x) = A(x,x) + \sum_{y=x+1}^n \frac{2}{y-1} B(y,x).
$$
We have the trivial estimate $A(x, x) \le A^{\ast}(x, x) = x^2.$
Taking $n^{2/3} < x \le n$ and using the estimate of Theorem \ref{thm:A1},
we have
\begin{eqnarray*}
A(n, x) &= & O( x^2) + \sum_{y=x+1}^n \frac{2}{y-1}( \frac{1}{2}  \vartheta(x)y)  + O \left(\sum_{y=x+1}^n \frac{2}{y-1}( x^{5/4} y^{5/4} (\log y)^{7/2} + n^{5/3} \log n ) \right) \\
&=& 
\vartheta(x) (n-x)  +  O \left( \sum_{y=x+1}^n \frac{1}{y} \vartheta(x) \right) + O\left( x^{5/4} \sum_{y=x+1}^n y^{-1/4} (\log y)^{7/2} \right) + O \left( n^{5/3} (\log n)^2 \right) \\
&=& 
\vartheta(x) n  + O\left(x^2+  x \log n    + x^{5/4} n^{3/4} (\log n)^{7/2} + n^{5/3} (\log n)^2  \right)\\
&=& 
\vartheta(x) n + O\left(  x^{5/4} n^{3/4} (\log n)^{7/2} + n^{5/3} (\log n)^2 \right),
\end{eqnarray*} 
where the  last line takes the largest of the terms  in  the given range of $x$.
\end{proof}

\begin{cor}\label{cor:A3} 
For  $n \ge 1$  and  $1 \le x \le n$,   set 
\begin{equation}\label{eqn:A5}
\log G(n,x)  = \frac{1}{2} \vartheta(x) n + \tR_{G}(n,x)
\end{equation}
where $\vartheta(x)$ is the first Chebyshev function
and $\tR_{G}(n,x)$ is the remainder.
Then for  $n^{2/3} \le x \le n$, 
\begin{equation} \label{eqn:A6}
\tR_{G}(n,x) = O \left( x^{5/4} n^{3/4} (\log n)^{7/2} + n^{5/3} (\log n)^2 \right).
\end{equation} 
\end{cor}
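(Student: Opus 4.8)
The plan is to obtain Corollary~\ref{cor:A3} directly from Theorem~\ref{thm:A1} and Theorem~\ref{thm:A2}, using the basic identity \eqref{eqn:GABx}, namely $\log G(n,x) = A(n,x) - B(n,x)$, which holds for all $1 \le x \le n$. Both Theorem~\ref{thm:A1} and Theorem~\ref{thm:A2} are already stated on the range $n^{2/3} \le x \le n$, which is exactly the range appearing in \eqref{eqn:A6}, so there is no need to enlarge or shrink the domain of validity.

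First I would substitute the two expansions. Theorem~\ref{thm:A2} gives $A(n,x) = \vartheta(x) n + \tR_{A}(n,x)$ with $\tR_{A}(n,x) = O\!\left( x^{5/4} n^{3/4} (\log n)^{7/2} + n^{5/3} (\log n)^2 \right)$, and Theorem~\ref{thm:A1} gives $B(n,x) = \tfrac{1}{2}\vartheta(x) n + \tR_{B}(n,x)$ with $\tR_{B}(n,x) = O\!\left( x^{5/4} n^{3/4} (\log n)^{7/2} + n^{5/3} \log n \right)$. Subtracting, the leading terms combine as $\vartheta(x) n - \tfrac{1}{2}\vartheta(x) n = \tfrac{1}{2}\vartheta(x) n$, matching the main term in \eqref{eqn:A5}. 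Hence $\tR_{G}(n,x) = \tR_{A}(n,x) - \tR_{B}(n,x)$.

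It then remains only to bound this by the triangle inequality, $|\tR_{G}(n,x)| \le |\tR_{A}(n,x)| + |\tR_{B}(n,x)|$, and to observe that the error term of Theorem~\ref{thm:A2} dominates that of Theorem~\ref{thm:A1} on the range $n^{2/3} \le x \le n$: the two $x^{5/4} n^{3/4}(\log n)^{7/2}$ contributions coincide, and $n^{5/3}(\log n)^2$ dominates $n^{5/3}\log n$. Therefore the sum is $O\!\left( x^{5/4} n^{3/4} (\log n)^{7/2} + n^{5/3} (\log n)^2 \right)$, which is precisely \eqref{eqn:A6}.

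I do not expect any real obstacle here, since this is a bookkeeping corollary of the two preceding theorems; the only points that genuinely need to be checked are that Theorem~\ref{thm:A1} and Theorem~\ref{thm:A2} are both valid on the same range $n^{2/3}\le x\le n$ (they are, by inspection of their statements) and that the combined error term is no larger than the one claimed (it is not, by the term-by-term comparison above). The harder content has already been absorbed into Theorem~\ref{thm:A1}, whose proof carries the exponential-sum input of Granville--Ramar\'e.
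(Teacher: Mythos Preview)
Your proof is correct and follows exactly the same approach as the paper: use the identity $\log G(n,x) = A(n,x) - B(n,x)$, substitute the estimates of Theorem~\ref{thm:A1} and Theorem~\ref{thm:A2}, and note that $\tR_{G}(n,x) = \tR_{A}(n,x) - \tR_{B}(n,x)$. Your additional remarks about the triangle inequality and the dominance of the $(\log n)^2$ term are fine and slightly more explicit than the paper's one-line justification.
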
  

\begin{proof} 
Use the identity  $\log G(n,x) = A(n,x) - B(n,x)$ 
together with  the estimates in Theorem \ref{thm:A1} and Theorem \ref{thm:A2},
noting $\tR_{G}(n,x) = \tR_{A}(n,x)- \tR_{B}(n,x)$.
\end{proof} 

\begin{rem}\label{rem:A4} 
The estimates above have a nontrivial error term  for $x > n^{2/3} (\log n)^{2+ \epsilon}.$
O. Bordell\`{e}s also observes that one  can obtain   results parallel
to the Theorems above,  covering  the range $n^{1/2} <x \le  n^{2/3}$
having the same main terms and nontrivial
using an exponential sum estimate 
given in  Ma and Wu \cite[Proposition 3.1]{MaWu:20}.
\end{rem}

%
%

\end{document}